\newtheorem{defi}{Definition}[section] 
\newtheorem{theo}[defi]{Theorem}
\newtheorem{coro}[defi]{Corollary} 
\newtheorem{lemma}[defi]{Lemma}
\newtheorem{prop}[defi]{Proposition}
\theoremstyle{remark} \newtheorem*{rem}{Remark}
\newcommand{\Ss}{\mathbb{S}}
\newcommand{\R}{\mathbb{R}} 
\newcommand{\Q}{\mathbb{Q}}
\newcommand{\Z}{\mathbb{Z}}
\newcommand{\proj}{\mathbb{P}}
\newcommand{\h}{\mathbb{H}}
\newcommand{\N}{\mathbb{N}} 
\newcommand{\e}{\varepsilon}
\newcommand{\p}{\varphi}
\newcommand{\Diff}{\mathrm{Diff}}
\newcommand{\Homeo}{\mathrm{Homeo}}
\newcommand{\suite}{_{n\in \N}}
\newcommand{\Stab}{\mathrm{Stab}}
\newcommand{\PSL}{\mathrm{PSL}}
\newcommand{\Gr}{\mathrm{Gr}}
\newcommand{\Log}{\mathrm{Log}}
\newcommand{\Aff}{\mathrm{Aff}}
\newcommand{\intoo}[2]{\mathopen{]}#1\,,#2\mathclose{[}}
\newcommand{\intff}[2]{\mathopen{[}#1\,,#2\mathclose{]}}
\newcommand{\intof}[2]{\mathopen{]}#1\,,#2\mathclose{]}}
\newcommand{\intfo}[2]{\mathopen{[}#1\,,#2\mathclose{[}}
\renewcommand{\tilde}{\widetilde}
\title{Convergence groups and semi conjugacy} 
\author{Daniel Monclair}
\date{\today}
\thanks{Partially supported by  ANR project GR-Analysis-Geometry (ANR-2011-BS01-003-02)}
\begin{document}

\maketitle

\begin{abstract} We study a simple problem that arises from the study of Lorentz surfaces and Anosov flows. For a non decreasing map of degree one $h:\mathbb{S}^1\to \mathbb{S}^1$, we are interested in groups of circle diffeomorphisms that act on the complement of the graph of $h$ in $\mathbb{S}^1\times \mathbb{S}^1$ by preserving a volume form. We show that such groups are semi conjugate to subgroups of $\mathrm{PSL}(2,\mathbb{R})$, and that when $h\in \mathrm{Homeo}(\mathbb{S}^1)$, we have a  topological conjugacy.  We also construct examples, where $h$ is not continuous, for which there is no such conjugacy.
\end{abstract}

\setcounter{tocdepth}{1}
\tableofcontents

\section{Introduction}
\subsection{Semi conjugacy} We say that a map $h:\Ss^1\to \Ss^1$ is  \textbf{non decreasing of degree one} if it is non constant and it admits a non decreasing lift $\tilde h:\R\to\R$ such that $\tilde h(x+1)=\tilde h(x)+1$ for all $x\in \R$.  Two representations $\rho_1,\rho_2 : \Gamma \to \Homeo(\Ss^1)$ are \textbf{semi conjugate} if there is a non decreasing map of degree one $h:\Ss^1\to \Ss^1$ such that $\rho_2(\gamma)\circ h=h\circ \rho_1(\gamma)$ for all $\gamma\in \Gamma$. We will say that $(\rho_1,\rho_2,h)$ is a \textbf{semi conjugate triple}.\\
\indent It was shown by Ghys in \cite{Gh87b} that semi conjugacy is an equivalence relation: if $(\rho_1,\rho_2,h)$ is a semi conjugate triple, then there is $h^*:\Ss^1\to \Ss^1$ non decreasing of degree one such that $(\rho_2,\rho_1,h^*)$ is a semi conjugate triple.\\
\indent If $h:\Ss^1\to\Ss^1$ is non decreasing of degree one, then we denote by $G(h)\subset \Ss^1\times \Ss^1$ the union of the graph of $h$ and the vertical segments joining discontinuities. If $(\rho_1,\rho_2,h)$ is a semi conjugate triple, then the map $(x,y)\mapsto (\rho_1(\gamma)(x),\rho_2(\gamma)(y))$ preserves $G(h)$ for all $\gamma\in \Gamma$.\\
\indent Values at discontinuities of a semi conjugacy are not particularly interesting. If we change $h$ by the function that is left continuous (or right continuous) and equal to $h$ except eventually at discontinuity points, then we still have a semi conjugacy. The important object in a semi conjugate triple $(\rho_1,\rho_2,h)$ is not the map $h$, but its "graph" $G(h)$.
\subsubsection{Collapsing non wandering intervals} When considering a representation $\rho:\Gamma \to \Homeo(\Ss^1)$ that preserves a Cantor set $K\subset \Ss^1$, such that orbits of points in $K$ are dense in $K$ (called an exceptional minimal set), it is standard to consider the collapsed action $\rm{Col}(\rho)$ defined by collapsing the connected components of $\Ss^1\setminus K$ to points. More precisely, we can consider a continuous map $h:\Ss^1\to \Ss^1$ that is non decreasing of degree one, such that the intervals where $h$ is constant are exactly the connected components of $\Ss^1\setminus K$. It induces a unique representation $\rm{Col}(\rho) :\Gamma \to \Homeo(\Ss^1)$ such that $(\rho,\rm{Col}(\rho),h)$ is a semi conjugate triple. The interest in considering this collapsed action is that the orbits of $\rm{Col}(\rho)$ are dense in $\Ss^1$. It is well defined up to a conjugacy in $\Homeo(\Ss^1)$.\\
\indent If $\rho$ has values in $\Diff(\Ss^1)$, it is not clear wether $\rm{Col}(\rho)$ can also be asked to be differentiable. However, if $\rho$ has values in $\PSL(2,\R)$ and acts projectively on $\Ss^1=\R\proj^1$, then the collapsed action $\rm{Col}(\p)$ still has the convergence property, i.e. all sequences have north/south dynamics. Using a theorem of Gabai and Casson-Jungreis (\cite{Gabai},\cite{CJ}), this implies that $\rm{Col}(\rho)$ is topologically conjugate to the projective action of a subgroup of $\PSL(2,\R)$ (we will give more details on convergence groups in section \ref{sec:circle_dynamics}). More precisely, the class of \textbf{topologically Fuchsian} representations, i.e. representations $\rho:\Gamma \to \Homeo(\Ss^1)$ that are topologically conjugate to the projective action of a subgroup of $\PSL(2,\R)$, is stable under collapsing.
\subsubsection{Opening orbits} It is also possible to go backwards. Given a representation $\rho : \Gamma \to \Homeo(\Ss^1)$, and a point $x_0\in \Ss^1$, we can open the orbit of $x$ to intervals on which the action of $\Stab(x_0)$ can be chosen. Indeed, choose an increasing map of degree one $h:\Ss^1\to \Ss^1$ whose points of discontinuity are exactly the points of the orbit of $x$. For every $x \in \Gamma. x_0$, let $I_x$ be the interval between the left and right limits of $h$ at $x$. Fix an action $\alpha : \Stab(x_0) \to \Homeo(I_{x_0})$. For every $x\in \Gamma.x_0$, we choose $\delta_x$ such that $x=\rho(\delta_x)(x_0)$ and a homeomorphism $h_x : I_{x_0} \to I_x$. We define $\hat \rho : \Gamma \to \Homeo(\Ss^1)$ such that: \begin{enumerate} \item $\hat\rho(\gamma)(h(x))=h(\rho(\gamma)(x))$ if $x\notin \Gamma .x_0$ \item  $\hat \rho (\gamma) (y) = \alpha (\gamma)(y)$ if $y\in I_{x_0}$ and $\gamma \in \Stab(x_0)$ 
\item $\hat \rho(\gamma)(y) = h_{\rho(\gamma)(x)}(\alpha(\delta_{\rho(\gamma)(x)}^{-1}\gamma \delta_x)(h_x^{-1}(y)))$ for $y\in I_x$ and $x\in \Gamma.x_0$  \end{enumerate}

\indent This defines an action $\hat \rho : \Gamma \to \Homeo(\Ss^1)$ such that $(\rho,\hat \rho,h)$ is a semi conjugate triple.\\
\indent Note that if $\rho$ has dense orbits, then $\rm{Col}(\hat \rho)=\rho$ (because the collapsed action is unique, but there are different possibilities for opening orbits).\\
\indent Once again, if $\rho$ has values in $\Diff(\Ss^1)$, then it is not easy to construct $\hat \rho$ in a differentiable way. A well known construction of Denjoy (\cite{Denjoy}) shows that if we start with an irrational rotation, then we can open an orbit in a $C^1$ way. However, Denjoy also showed that it is not possible to obtain a $C^2$ diffeomorphism. A theorem of Matsumoto (see \cite{Matsumoto}) states that if $\Gamma_g$ is the fundamental group of the closed oriented surface $\Sigma_g$ of genus $g\ge 2$, and $\rho:\Gamma_g\to\PSL(2,\R)$ is the the representation given by a hyperbolic metric on $\Sigma_g$, then it is not possible to open an orbit with a $C^2$ action (because the Euler number stays the same).

\subsection{Semi conjugacy and area preserving actions}
Given a semi conjugate triple $(\rho_1,\rho_2,h)$, the underlying group $\Gamma$ acts on $M_h=\Ss^1\times \Ss^1\setminus G(h)$ through the maps $(x,y)\mapsto (\rho_1(\gamma)(x),\rho_2(\gamma)(y))$ for $\gamma \in \Gamma$. We will say that a semi conjugate triple $(\rho_1,\rho_2,h)$ is \textbf{area preserving} if there is a continuous volume form $\omega$ on $M_h$ preserved by the action of $\Gamma$.\\
\indent  Notice that this implies that $\rho_1$ and $\rho_2$ are differentiable (they take values in $\Diff^{k+1}(\Ss^1)$ if $\omega$ is $C^k$, $k\ge 0$). If we write $\omega=\omega(x,y)dx\wedge dy$, then the fact that the triple $(\rho_1,\rho_2,h)$ preserves $\omega$ is equivalent to the following formula:
$$\forall \gamma\in \Gamma~~ \forall (x,y)\in M_h~~  \rho_1(\gamma)'(x)\rho_2(\gamma)'(y)=\frac{\omega(x,y)}{\omega(\rho_1(\gamma)(x),\rho_2(\gamma)(y))} $$
\indent In \cite{Monclaira}, we studied the case where $h=Id$ (hence $\rho_1=\rho_2$). The main example is given by the projective action of $\PSL(2,\R)$ on $\Ss^1\approx \R\proj^1$ and the volume form $\frac{4}{(x-y)^2}dx\wedge dy$. We proved that if $(\rho,\rho,Id)$ is area preserving, then $\rho$ is topologically Fuchsian. We wish to extend this result to the general case of semi conjugacy. The original motivation for this problem is the study of Lorentz surfaces.
\subsubsection{Lorentzian interpretation} If the semi conjugate triple $(\rho_1,\rho_2,h)$ preserves a volume form $\omega(x,y)dx\wedge dy$ on $M_h$, then it also preserves the pseudo Riemannian metric $\omega(x,y)dxdy$, which is Lorentzian, i.e. of signature $(1,1)$. Notice that for any semi conjugate triple $(\rho_1,\rho_2,h)$ (i.e. not necessarily area preserving), the action on $M_h$ preserves the conformal class of the flat metric $dxdy$. However, preserving a Lorentzian conformal class in dimension two is not a rigid notion (the conformal group may be infinite dimensional). The isometry group of a Lorentz surface is always a Lie group of dimension at most $3$ (not necessarily connected), and we will show that only subgroups of $\PSL(2,\R)$ can appear for metrics in the conformal class $[dxdy]$ on $M_h$.\\
\indent The conformal classes $(M_h,[dxdy])$ share an important property: they are globally hyperbolic (this roughly means that the space of isotropic lines is a smooth manifold). Moreover, they are spatially compact (isotropic lines form a compact manifold). The study of isometry groups of spatially compact Lorentz surfaces was started in \cite{Monclairb}.\\
\indent The problem can be formulated in terms of $h$ and $\omega$: we denote by $G_\omega$ the  group of orientation and time orientation preserving (i.e. preserving the set of tangent vectors $(u,v)$ such that  $u<0$ and $v>0$)  isometries of the Lorentz metric associated to $\omega$. If $\p\in G_\omega$, then there are $f,g\in \Diff(\Ss^1)$ such that $\p(x,y)=(f(x),g(y))$ for all $(x,y)\in M_h$ (because $\p$ preserves the horizontal and vertical lines which are the isotropic directions). We will set $\rho_1(\p)=f$ and $\rho_2(\p)=g$. This defines two representations $\rho_1,\rho_2:G_\omega \to \Diff(\Ss^1)$ such that the semi conjugate triple $(\rho_1,\rho_2,h)$ preserves the volume form $\omega$.\\
\indent However, no Lorentzian background will be required in this paper, as we will deal with metrics that are only continuous, and all the classical tools (curvature, geodesics, exponential map, \dots) are only defined when the metric is at least $C^2$.
\subsubsection{Results} As mentioned earlier, if $(\rho,\rho,Id)$ is area preserving, then $\rho$ is topologically conjugate to a representation in $\PSL(2,\R)$. This result can be extended to the more general setting of topological conjugacy:
\begin{theo} \label{main_conjugacy} Let $(\rho_1,\rho_2,h)$ be a semi conjugate triple where $h\in \Homeo(\Ss^1)$. If $(\rho_1,\rho_2,h)$ is area preserving, then $\rho_1$ is topologically Fuchsian. \end{theo}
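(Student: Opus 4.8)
The plan is to use the homeomorphism $h$ to straighten $G(h)$ to the diagonal and then reduce to a statement about diagonal actions. Since $h\in\Homeo(\Ss^1)$ is non decreasing of degree one it is an orientation preserving homeomorphism, so the semi conjugacy relation becomes $\rho_2(\gamma)=h\circ\rho_1(\gamma)\circ h^{-1}$. The homeomorphism $\Phi(x,y)=(x,h^{-1}(y))$ of $\Ss^1\times\Ss^1$ then maps $G(h)$ onto the diagonal $\Delta$ and conjugates the action of $\Gamma$ on $M_h$ to the diagonal action $\gamma\cdot(u,v)=(\rho_1(\gamma)(u),\rho_1(\gamma)(v))$ on $A:=\Ss^1\times\Ss^1\setminus\Delta$, which preserves the pushforward measure $\nu:=\Phi_*\omega$. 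Because $\omega$ is a continuous volume form on the open surface $M_h$ and $\Phi$ is a homeomorphism respecting the horizontal and vertical foliations, $\nu$ is a locally finite, non atomic Borel measure on $A$ of full support, and every ``slice'' $\{a\}\times C$ or $C\times\{a\}$ (with $C\subset\Ss^1$ compact) is $\nu$-negligible. So it suffices to show that a subgroup $G<\Homeo(\Ss^1)$ whose diagonal action on $A$ preserves such a measure has the convergence property: by the theorem of Gabai and Casson--Jungreis (\cite{Gabai},\cite{CJ}), as recalled in Section~\ref{sec:circle_dynamics}, $\rho_1$ is then topologically Fuchsian. This follows the scheme of \cite{Monclaira}, which treats $h=\mathrm{Id}$; the point of requiring $h$ continuous — rather than only non decreasing of degree one — is precisely that $\Phi$, hence the straightening, is then available, while the point of not requiring $h$ smooth is that $\nu$ need not be a volume form, forcing one to argue with measures.

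For the convergence property, let $\gamma_n\in\Gamma$ be pairwise distinct, put $g_n=\rho_1(\gamma_n)$, and, by Helly's selection theorem applied to lifts, pass to a subsequence along which $g_n\to g$ and $g_n^{-1}\to\tilde g$ pointwise, with $g,\tilde g$ non decreasing of degree one. If $g$ is a homeomorphism, then $g_n\to g$ uniformly (a pointwise limit of monotone maps with continuous limit is uniform) — the equicontinuous alternative. If $g\equiv b$ off a single point $a$ (its graph the ``cross'' $(\{a\}\times\Ss^1)\cup(\Ss^1\times\{b\})$), then $g_n\to b$ locally uniformly on $\Ss^1\setminus\{a\}$ and $g_n^{-1}\to a$ locally uniformly on $\Ss^1\setminus\{b\}$: north/south dynamics. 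Otherwise $g$ is neither a homeomorphism nor a cross, so it is either non injective with a proper non degenerate flat or strictly increasing with a jump; in the latter case $\tilde g$ is non injective with a proper non degenerate flat, and since $\nu$ is also $\gamma_n^{-1}$-invariant we may, replacing $(\gamma_n)$ by $(\gamma_n^{-1})$ if needed, assume $g$ itself is constant equal to some $b$ on a non degenerate closed arc $J$ whose complementary arc $I$ is also non degenerate.

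This last case I would rule out by a volume-collapse argument. Since $g$ has degree one, its restriction to a non degenerate compact arc $K'\subset I^\circ$, chosen with endpoints among the continuity points of $g$, has image contained in a compact arc disjoint from $b$; fix $\delta>0$ with $g(K')\cap B(b,4\delta)=\emptyset$, fix a non degenerate compact arc $J'\subset J^\circ$, and set $B:=J'\times K'$, so that $B\subset A$ and $\nu(B)>0$. Because $g\equiv b$ on $J^\circ$ we get $g_n(J')\subset B(b,\e_n)$ with $\e_n\to 0$, and by monotonicity together with the choice of $K'$ the arcs $g_n(K')$ stay in the fixed compact set $\Ss^1\setminus B(b,2\delta)$ for $n$ large; hence $\gamma_n\cdot B=g_n(J')\times g_n(K')\subset\overline{B(b,\e_n)}\times(\Ss^1\setminus B(b,2\delta))$ for $n$ large. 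These are compact subsets of $A$ (they avoid $\Delta$), decreasing to the $\nu$-negligible slice $\{b\}\times(\Ss^1\setminus B(b,2\delta))$, and the first of them has finite $\nu$-measure, so $\nu(\gamma_n\cdot B)\to 0$; but $\nu$ is invariant, so $\nu(\gamma_n\cdot B)=\nu(B)>0$ — a contradiction. Hence every sequence in $\Gamma$ has a subsequence whose $\rho_1$-image is equicontinuous-convergent or north/south, so $\rho_1(\Gamma)$ has the convergence property and Theorem~\ref{main_conjugacy} follows.

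The real obstacle is this last argument, in two respects. First, sequences in $\Homeo(\Ss^1)$ that converge only pointwise may simultaneously flatten intervals and develop jumps, so one must check carefully that the trichotomy (homeomorphic limit / cross / genuine collapse of a proper arc, possibly after passing to $\tilde g$) exhausts all cases. Second, and more seriously, the translates $\gamma_n\cdot B$ must be kept in a fixed compact part of $A$, away from the diagonal $\Delta$ near which $\omega$ — hence $\nu$ — may blow up; choosing $K'$ well inside the complementary arc and far from the collapse value $b$ is exactly what guarantees this. The final step, from the convergence property to a conjugacy into $\PSL(2,\R)$ (valid also for non discrete image), is quoted from Section~\ref{sec:circle_dynamics}.
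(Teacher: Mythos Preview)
Your argument is correct, and it takes a genuinely different route from the paper's.

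The paper does not straighten $G(h)$ to the diagonal. Instead, it proves the more general Theorem~\ref{main_semi_conjugacy} first: for arbitrary non decreasing $h$ of degree one, it collapses the flats of $h$ and the discontinuity gaps via continuous maps $p_1,p_2$, producing representations $\hat\rho_1,\hat\rho_2$ related by a genuine homeomorphism $\hat h$, and then shows that $\hat\rho_1$ has the convergence property. The convergence argument there is analytic rather than measure-theoretic: it uses the derivative identity $\rho_1(\gamma)'(x)\rho_2(\gamma)'(y)=\omega(x,y)/\omega(\rho_1(\gamma)(x),\rho_2(\gamma)(y))$ together with Fatou's lemma and $\int_{\Ss^1}f_n'=1$ to rule out three distinct limit values, and then a second derivative-bound/Ascoli step to rule out two non degenerate limit arcs. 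Theorem~\ref{main_conjugacy} then drops out as the special case $U_1=U_2=\emptyset$, where $\hat\rho_1=\rho_1$.

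Your approach trades generality for elementarity. The homeomorphism $\Phi$ is only available because $h\in\Homeo(\Ss^1)$, so you cannot reach Theorem~\ref{main_semi_conjugacy} this way; on the other hand, once you pass to the diagonal action you never touch derivatives of $\rho_1$ or the explicit form of $\omega$ --- the whole proof runs on a Radon measure of full support with null slices, which is exactly what survives the pushforward by a mere homeomorphism. In effect you have isolated the minimal hypothesis under which the $h=\mathrm{Id}$ argument of \cite{Monclaira} still goes through, and carried it out cleanly. The two points you flag (exhaustiveness of the trichotomy after possibly swapping to $\gamma_n^{-1}$, and keeping $\gamma_n\cdot B$ in a compact of $A$) are handled correctly; the choice of $K'$ inside the complementary arc, with endpoints at continuity points of $g$ and image bounded away from $b$, is precisely what pins the translates away from $\Delta$.
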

This can be slightly reformulated in terms of a actions on the circle: if $\rho_1:\Gamma\to\Diff(\Ss^1)$ is such that are $\rho_2$ and $h\in \Homeo(\Ss^1)$ forming an area preserving triple $(\rho_1,\rho_2,h)$, then $\rho_1$ is topologically Fuchsian.\\
\indent In the general case, we obtain a semi conjugacy:
\begin{theo} \label{main_semi_conjugacy} Let $(\rho_1,\rho_2,h)$ be an area preserving triple. Then $\rho_1$ is semi conjugate to a representation in $\PSL(2,\R)$. \end{theo}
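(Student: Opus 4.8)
The plan is to reduce to Theorem~\ref{main_conjugacy}. From $(\rho_1,\rho_2,h)$ I will manufacture an area preserving triple $(\hat\rho_1,\hat\rho_2,\hat h)$ with $\hat h\in\Homeo(\Ss^1)$ and with $\rho_1$ semi conjugate to $\hat\rho_1$. Theorem~\ref{main_conjugacy} then gives that $\hat\rho_1$ is topologically Fuchsian; a topological conjugacy is in particular a semi conjugacy and semi conjugacy is an equivalence relation (\cite{Gh87b}), so $\rho_1$ will be semi conjugate to a representation in $\PSL(2,\R)$.

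To build $\hat h$ I run the ``opening orbits'' construction of the introduction on the two obstructions to invertibility of $h$, which are the two kinds of segments of $G(h)$. The discontinuity points of $h$, over which sit the vertical segments of $G(h)$, form a $\rho_1$-invariant subset of $\Ss^1$: opening their $\rho_1$-orbits yields $\hat\rho_1$, and taking $\hat h$ on each blown up interval to be a homeomorphism onto the corresponding gap $\intff{h(x_0^-)}{h(x_0^+)}$ makes the semi conjugacy continuous. Dually, the points with nondegenerate $h$-preimage---over which sit the horizontal segments of $G(h)$, i.e.\ the vertical segments of $G(h^*)$ for the reverse triple $(\rho_2,\rho_1,h^*)$ of \cite{Gh87b}---form a $\rho_2$-invariant subset; opening their $\rho_2$-orbits yields $\hat\rho_2$, and redefining $\hat h$ on each plateau as a homeomorphism onto the new blown up interval makes $\hat h$ injective. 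A continuous degree one injection of $\Ss^1$ is a homeomorphism, so $\hat h\in\Homeo(\Ss^1)$; and, choosing the openings equivariantly as in the introduction, $(\hat\rho_1,\hat\rho_2,\hat h)$ is a semi conjugate triple with $\rho_1$ semi conjugate to $\hat\rho_1$ (and $\rho_2$ to $\hat\rho_2$).

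The substantial step is to equip $M_{\hat h}$ with a $\Gamma$-invariant continuous volume form $\hat\omega$. Away from the blown up strips, $M_{\hat h}$ is identified, through the blow-down maps (diffeomorphisms there), with a dense open subset of $M_h$, and there I take $\hat\omega$ to be the pullback of $\omega$. A strip lying over a blown up discontinuity point $x_0$, reparametrized through $\hat h$, becomes the band $B=\{(t,y)\in\intff{\alpha}{\beta}\times\Ss^1 : y\neq t\}$ with $\alpha=h(x_0^-)$, $\beta=h(x_0^+)$, on which the group $H:=\rho_2(\Stab_{\rho_1}(x_0))$ of circle diffeomorphisms fixing $\alpha$ and $\beta$ acts diagonally; I must find an $H$-invariant continuous volume form on $B$, singular along $\{y=t\}$, whose limits as $t\to\alpha^+$ and $t\to\beta^-$ are the functions $\omega(x_0^-,\cdot)$ and $\omega(x_0^+,\cdot)$ fixed above. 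This boundary data is automatically compatible with $H$-invariance: $\omega$ is a Lorentz metric blowing up along $G(h)$, so any isometry fixing a point of $G(h)$ has equal derivatives in the two coordinates there, whence $\rho_1(\gamma)'(x_0)=\rho_2(\gamma)'(\alpha)$ for $\gamma\in\Stab_{\rho_1}(x_0)$---exactly the cocycle identity under which $\omega(x_0^-,\cdot)$ is the boundary value of an $H$-invariant volume form on $B$. Using this, and the tameness of the point stabilizers $\Stab_{\rho_1}(x_0)$ (which one reads off from the invariant volume form on $M_h$, noting that such a stabilizer preserves a null line of the globally hyperbolic surface $M_h$), one constructs $\hat\omega$ on $B$ and checks that it glues continuously to $\omega$ across the strip boundary. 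The plateau strips are treated identically via the $h\leftrightarrow h^*$ symmetry, and one obtains the desired $\hat\omega$; Theorem~\ref{main_conjugacy} then concludes.

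The hard part is this last step: the existence of the invariant volume form on the blown up strips and its continuous gluing to $\omega$. It rests on the boundary compatibility (pleasantly forced by the Lorentzian nature of $\omega$ near $G(h)$) and on enough control of the stabilizers of the opened points. A possible detour, bypassing the explicit gluing, is to run the dynamical analysis behind Theorem~\ref{main_conjugacy} directly in the discontinuous setting and deduce that $\rho_1$, after collapsing its wandering intervals, becomes a convergence group---hence topologically Fuchsian by Gabai and Casson-Jungreis---so that $\rho_1$ is again semi conjugate to a subgroup of $\PSL(2,\R)$.
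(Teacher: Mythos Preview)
Your primary approach---opening orbits on both factors to manufacture an area preserving triple $(\hat\rho_1,\hat\rho_2,\hat h)$ with $\hat h\in\Homeo(\Ss^1)$, then invoking Theorem~\ref{main_conjugacy}---has two genuine gaps. First, the opened actions $\hat\rho_1,\hat\rho_2$ must land in $\Diff(\Ss^1)$ for the notion of ``area preserving'' even to make sense, and as the introduction itself warns, opening orbits differentiably is delicate (Denjoy's and Matsumoto's obstructions). You do not address this. Second, the heart of your argument---building an $H$-invariant volume form on each blown up strip $B$ and gluing it continuously to $\omega$---is only asserted. Your key identity $\rho_1(\gamma)'(x_0)=\rho_2(\gamma)'(\alpha)$ is claimed via ``any isometry fixing a point of $G(h)$ has equal derivatives there'', but $G(h)$ lies outside $M_h$ where $\omega$ is defined (and typically blows up), so Lemma~\ref{continuous_rigidity} does not apply; nor do you actually carry out the construction or verify continuity across the strip boundary. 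The ``tameness of the point stabilizers'' is likewise invoked without proof.

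The paper's proof goes in the opposite direction and thereby sidesteps all of this. Instead of opening, it \emph{collapses}: it maps the plateaux of $h$ to points via $p_1$ and the gaps of $h$ to points via $p_2$, obtaining collapsed actions $\hat\rho_1,\hat\rho_2$ related by a genuine homeomorphism $\hat h$. Crucially, it never constructs a volume form on $M_{\hat h}$. It proves the convergence property for $\hat\rho_1$ \emph{directly}, using the original $\omega$ on $M_h$: given a non-equicontinuous sequence $\hat f_n$, it lifts test points through $p_1,p_2$, exploits the relation $f_n'(x)g_n'(y)=\omega(x,y)/\omega(f_n(x),g_n(y))$ on $M_h$, and rules out three limit points (and then two fat limit sets) by Fatou/Ascoli arguments. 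This is exactly the ``detour'' you mention in your last sentence---and it is not a detour but the actual proof. Your opening strategy, by contrast, trades a direct dynamical estimate for a construction problem (differentiable openings plus an invariant form on new strips) that is at least as hard as the theorem itself.
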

This result is  almost a particular case of Theorem 1.2 in \cite{Monclairb},  with the exception that we no longer assume that the preserved volume form is smooth.

\subsection{A new class of "semi convergence" groups} A natural question is to ask if the result of  Theorem \ref{main_conjugacy} is true under the assumptions of Theorem \ref{main_semi_conjugacy}.  In section \ref{sec:elements}, we will describe elements of $G_\omega$ and  observe that there are examples where an element $\rho_1(\p)$ has three fixed points yet is not trivial, therefore it cannot be  topologically conjugate to an element of $\PSL(2,\R)$. More importantly, we will see that these examples can occur in a situation where $G_\omega$ contains a free group.

\begin{theo} \label{free_group_example} There are faithful representations $\rho_1,\rho_2 : \mathbb F_3 \to \Diff(\Ss^1)$ and $h:\Ss^1\to \Ss^1$ non decreasing of degree one such that $(\rho_1,\rho_2,h)$ is area preserving, and such that $\rho_2(\mathbb F_3)$ contains non trivial elements with three fixed points. \end{theo}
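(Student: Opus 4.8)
The plan is to start from the Fuchsian model and \emph{open an orbit} on one of the two factors. First I fix a faithful representation $\rho_1:\mathbb F_3\to\PSL(2,\R)$ onto a geometrically finite Fuchsian group of the second kind that is a free product $\langle g_1\rangle*\langle g_2\rangle*\langle g_3\rangle$ in ping--pong position, with $g_1$ \emph{parabolic}; let $p\in\Ss^1=\R\proj^1$ be its fixed point, so $\Stab_{\mathbb F_3}(p)=\langle g_1\rangle$ and the orbit $O:=\mathbb F_3\cdot p$ is a countable subset of the Cantor limit set. Recall that $(\rho_1,\rho_1,Id)$ is area preserving for the model volume form $\omega_0=\frac{dx\wedge dy}{(x-y)^2}$ on $\Ss^1\times\Ss^1\setminus\Delta$ (cf.~\cite{Monclaira}). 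I then choose $h:\Ss^1\to\Ss^1$ strictly increasing of degree one with discontinuity set exactly $O$, the jump at $x\in O$ having size $\ell_x$ (to be fixed later); this presents the target circle as $\R\proj^1$ with an interval $I_x$ of length $\ell_x$ glued in place of each $x\in O$, and I write $\psi$ for the collapsing map undoing it. I define $\rho_2:\mathbb F_3\to\Diff(\Ss^1)$ by the opening--orbits recipe ((1)--(3) of the introduction), the free data being: on $I_p$ the stabilizer $\langle g_1\rangle$ acts through $\alpha(g_1)=\theta$, chosen to be the time--one map of a vector field $f\,\partial_y$ with $f\ge0$ on $\overline{I_p}$ vanishing to exactly second order at the two endpoints and at one interior point $y_*$; together with transition diffeomorphisms $h_x:I_p\to I_x$. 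Then $(\rho_1,\rho_2,h)$ is a semi conjugate triple, $\rho_2$ is faithful since $\psi$ semi conjugates it onto the faithful $\rho_1$, and $\rho_2(g_1)$ has \emph{exactly three} fixed points, namely the two endpoints of $I_p$ and $y_*$: indeed $\rho_1(g_1)$ is parabolic and so fixes nothing on $\R\proj^1$ outside $p$, and freeness of $\mathbb F_3$ forces $\rho_2(g_1)(I_x)=I_{g_1\cdot x}$ to equal $I_x$ only when $x=p$.

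The choice of $g_1$ parabolic is forced. If $\rho_2(g_1)$ is to be an isometry of a continuous volume form on $M_h$, then along each component of $\partial I_p$ the metric has to match the transplanted model metric, and a short computation (of the kind carried out in Section~\ref{sec:elements}) shows this pins down the derivative of $\theta$ at both endpoints of $I_p$, and at $y_*$, to equal $\rho_1(g_1)'(p)$. For a hyperbolic element this common value would be $\ne1$, which is incompatible with $\theta$ being a homeomorphism of $\overline{I_p}$ fixing its endpoints; for a parabolic element it equals $1$, which is exactly the requirement that $\theta$ be the time--one flow of a vector field vanishing to second order. This is why a genuine example has to live over a cusp, and it also identifies the constraints $\theta$ must satisfy.

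It remains to build the invariant volume form $\omega$ on $M_h=\Ss^1\times\Ss^1\setminus G(h)$. I decompose $M_h$ into the bulk $B=\{(x,y)\in M_h:\ y\notin\bigcup_{x'\in O}I_{x'}\}$, on which $(x,y)\mapsto(x,\psi(y))$ is a diffeomorphism onto the model minus the horizontal null lines over $O$, and the tubes $T_{x'}=(\Ss^1\setminus\{x'\})\times I_{x'}$, which $\mathbb F_3$ permutes. On $B$ one looks for $\omega$ of the form $\frac{m(y)\,dx\wedge dy}{(x-\psi(y))^2}$ (the model metric twisted by a positive factor), invariance being equivalent to the cohomological equation $m(\rho_2(\gamma)y)\,\rho_2(\gamma)'(y)=m(y)\,\rho_1(\gamma)'(\psi(y))$, a coboundary problem over the free group $\mathbb F_3$. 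On a tube $T_{x'}$ only $\Stab(x')\cong\Z$ acts, through $\phi\times\theta_{x'}$ with $\phi$ the relevant parabolic M\"obius map restricted to $\Ss^1\setminus\{x'\}$---hence conjugate to a unit translation---and $\theta_{x'}$ the transported time--one flow; in coordinates $u$ on $\Ss^1\setminus\{x'\}$ and $v$ on each half of $I_{x'}$ that turn both factors into $t\mapsto t+1$, the invariant $2$--forms are the functions of $u-v$, and one selects the one with quadratic decay at $\pm\infty$ so that $\omega$ extends continuously and strictly positively across $\partial I_{x'}$ with precisely the value $\frac{m(\partial I_{x'})}{(x-x')^2}$ dictated by the bulk. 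The last task is to choose the remaining data---the jump sizes $\ell_{x'}$, the diffeomorphisms $h_{x'}$, and the tube profiles---so that the potential $m$ and all the tube forms, prescribed on a fundamental domain and spread around by the dynamics, patch into a single $\mathbb F_3$--invariant, continuous, strictly positive $\omega$.

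The main obstacle is exactly this patching, and it is of Denjoy type. On the one hand the jump sizes $\ell_{x'}$ must decay fast enough, relative to the distortion of $\mathbb F_3$ along the limit set, for $\rho_2$ to be of class $C^1$ at all (otherwise the statement is empty); on the other hand the bulk potential $m$ and the tube profiles must remain continuous and bounded away from $0$ up to the accumulation of the tubes onto the Cantor limit set. I expect to handle both simultaneously by taking $\rho_1$ convex cocompact away from the single cusp and choosing $\ell_{\gamma\cdot p}$ comparable to $\rho_1(\gamma)'(p)$, so that $\rho_1$ and $\rho_2$ are quasisymmetrically comparable, the cocycle $\rho_2(\gamma)'(y)/\rho_1(\gamma)'(\psi(y))$ is bounded above and below, and the bulk and tube parts of $\omega$ decay at matching rates near the limit set. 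The cyclic case of a single tube around a parabolic fixed point, which already contains the delicate point, is exactly what Section~\ref{sec:elements} works out.
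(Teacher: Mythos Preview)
Your approach is genuinely different from the paper's, and while the overall architecture is reasonable, there is a real gap exactly where you flag it.

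\textbf{How the paper proceeds.} The paper does \emph{not} open the orbit of a parabolic point. It takes $\rho_1:\mathbb F_3\to\PSL(2,\R)$ \emph{convex cocompact} (the holonomy of a hyperbolic structure on the twice punctured torus with funnels), chooses generators $a,b,\delta_1$ so that $\delta_1$ is hyperbolic and stabilises a complementary arc $(N,S)$ of the Cantor limit set $L$, and defines $\rho_2$ by keeping $\rho_2(a)=\rho_1(a)$, $\rho_2(b)=\rho_1(b)$ and only altering $\rho_2(\delta_1)$ on $(N,S)$ so as to create one extra (parabolic) fixed point there; $h$ is the identity on $L$. Thus $\rho_2(\delta_1)$ has three fixed points, two hyperbolic and one parabolic, which is the Section~\ref{subsec:hyperbolic} model. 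The invariant volume form is then produced by building an Axiom~A flow on $N_h=\Sigma_h/\mathbb F_3$, a deformation of the geodesic flow of $\mathrm{T}^1\h^2/\rho_1(\mathbb F_3)$: convex cocompactness makes the non-wandering set $\Omega_\psi$ a \emph{compact hyperbolic set}, on which an invariant volume exists by smooth conjugacy with the geodesic flow; a Livšic-type integration extends it to $W^s(\Omega_\psi)\cup W^u(\Omega_\psi)$, hence to $(L\times\Ss^1\cup\Ss^1\times L)\cap M_h$; the remaining extension to each horizontal strip uses only the cyclic stabiliser and the elementary models of Section~\ref{sec:elements}, and the final continuity across $L$ is obtained by a cross-ratio argument.

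\textbf{What you do differently, and what it costs.} Your argument that $g_1$ must be parabolic is correct \emph{within your opening-orbit scheme}: if $g_1$ were hyperbolic with second fixed point $q\notin O$, then $(q,y_*)\in M_h$ would be a fixed point and Lemma~\ref{continuous_rigidity} forces $\theta'(y_*)=\rho_1(g_1)'(p)$, contradicting the endpoint constraints. But this only shows that \emph{your} construction requires a cusp; it does not show that any construction does, and indeed the paper's complementary-interval modification sidesteps this entirely. The price you pay for the cusp is exactly the loss of convex cocompactness: the geodesic-flow machinery the paper relies on (compact hyperbolic non-wandering set, uniform exponential contraction on stable manifolds) is no longer available to you, so you are forced into the direct bulk-plus-tubes construction.

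\textbf{The gap.} Your bulk ansatz amounts to $\omega=\psi'(y)\,(x-\psi(y))^{-2}\,dx\wedge dy$, which is automatically invariant wherever $\psi$ is differentiable; there is no genuine coboundary problem there. The entire difficulty is therefore concentrated in what you call the Denjoy-type patching: you must produce jump data $(\ell_{x'})$, transition diffeomorphisms $h_{x'}$, and tube profiles so that (i) $\rho_2$ is $C^1$, (ii) $\psi'$ extends continuously and positively to the closure of the bulk, and (iii) the equivariantly propagated tube forms match the bulk boundary values $\psi'(\partial I_{x'})/(x-x')^2$ at every $x'\in O$. You state an expectation that $\ell_{\gamma\cdot p}\asymp\rho_1(\gamma)'(p)$ handles this, but with a parabolic $g_1$ the group is only geometrically finite, the orbit $O$ accumulates along the cusp in a non-uniform way, and none of the three compatibility conditions is verified. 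This is precisely the step where the paper invokes hyperbolic dynamics, and your sketch does not provide a substitute argument. As written, the proposal identifies the obstacle correctly but does not overcome it.
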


Such examples will be obtained by considering a representation $\rho_1:\mathbb F_3 \to \PSL(2,\R)$ associated to a certain hyperbolic metric on the twice punctured torus (whose fundamental group is $\mathbb F_3$), and perturbing the representation in $\Diff(\Ss^1)$ by adding a third fixed point to one of the generators. The action $\rho_2$ can be seen as a different opening of an orbit of the collapsed representation $\rm{Col}(\rho_1)$.

\subsection{Transverse structure of Anosov flows}  Let  $(M, \p^t)$ be an Anosov flow on a compact 3-manifold. It is known that the quotient space $Q^{\p}$ of lifts of orbits to the universal cover is diffeomorphic to  $\R^2$ (see \cite{B95}).  The  stable and unstable foliations determine two transversal one-dimensional foliations of $Q^{\p}$, hence a conformal Lorentz structure (whose isotropic lines are the leaves of these foliations).\\% The regularity of this structure is that of the weak stable and unstable foliations, which is always $C^{1+\e}$.\\
\indent The fundamental group $\pi_1(M)$ acts  naturally on $Q^{\p}$ by preserving this structure. In fact, Anosov flows up to topological equivalence tend to be classified by this conformal action of the fundamental group (\cite{B95}).\\
\indent If  $\p^t$ preserves a volume form, then we get a Lorentz metric on $Q^{\p}$ which is in general $C^{1+ \rm{Zygmund}} $ (and thus $C^{1+\alpha}$, for any $0< \alpha <1$). Higher regularity ($C^2$, or even $C^{1+\rm{zygmund}}$) implies not only rigidity for the Lorentz space $Q^{\p}$ (the curvature is constant), but also rigidity for the flow itself (smooth conjugacy with a reparametrisation of an algebraic flow, \cite{HK} and \cite{Gh87a}). The fundamental group acts isometrically for this Lorentz metric. \\ %One can wonder to what extent this action determines the flow, i.e. if a classification of Lorentz surfaces could classify Anosov flows.\\
%\indent The correspondence which associates to $(M, \p^t)$, the Lorentz metric on $Q^{\p}$ endowed with the isometric $\pi_1(M)$-action seems essentially faithful, which means that the classification of Anosov flows could follow from a classification of Lorentz surfaces.\\
\indent  Examples of such flows are abundant, and exist for instance on many hyperbolic manifolds (see \cite{FH}). Theses examples  belong to a special subcategory of Anosov flows: they are $\R$-covered (\cite{B01}). It means that the quotient space of $Q^{\p}$ by one of the foliations  defined by the stable or unstable foliation (and automatically by both), is Hausdorff,  hence homeomorphic to $\R$. This is equivalent to saying that the Lorentz space $Q^{\p}$ is globally hyperbolic. \\
\indent In these examples, the Lorentz metric is defined on a subset of $\R\times \R$, and the action of $\pi_1(M)$ gives diffeomorphisms of the real line. However, on some examples, circle diffeomorphisms arise, mostly for geodesic flows on negatively curved surfaces, and for the action of the fundamental group of a Seifert piece in a graph manifold (see \cite{Ba96}).

\subsection{Overview} We will start by recalling the necessary notions of circle dynamics, namely convergence groups, minimal invariant sets, and semi conjugacy. In section \ref{sec:convergence}, we will give the proofs of Theorem \ref{main_semi_conjugacy} and Theorem \ref{main_conjugacy}. Section \ref{sec:elements} gives a description of the possible elements of $G_\omega$, and section \ref{sec:examples} is devoted to the proof of Theorem \ref{free_group_example}.

\section{Circle dynamics}  \label{sec:circle_dynamics}
\subsection{Convergence groups} Finding a conjugacy between a subgroup of $\Homeo(\Ss^1)$  and a subgroup of $\PSL(2,\R)$, when it exists, is a rather complicated exercise. But there is a characterisation of the existence of such a conjugacy that does not require to find it explicitly.

\begin{defi} A sequence $(f_n)\suite \in \Homeo(\Ss^1)^{\N}$ has the convergence property if there are $a,b\in \Ss^1$ such that, up to a subsequence,  $f_n(x)\to b$ for all $x\ne a$.\\ A group $G \subset \Homeo(\Ss^1)$ is a convergence group if every  sequence in $G$ either satisfies the convergence property or has an equicontinuous subsequence.  \end{defi}
The classical definition of a convergence group also implies the sequence of the inverses $f_n^{-1}$ and locally uniform convergence, but it is not necessary in the case of $\Ss^1$. \\
\indent We say that a group $G\subset \Homeo(\Ss^1)$ is \textbf{topologically Fuchsian} if there is $h\in \Homeo(\Ss^1)$ such that $h^{-1} G h \subset \PSL(2,\R)$. Note that we do not ask for $G$ to be discrete. Subgroups of $\PSL(2,\R)$ all satisfy the convergence property, and since this property is invariant under topological conjugacy, so do topologically Fuchsian groups. This happens to be an equivalence.
\begin{theo}[Gabai \cite{Gabai}, Casson-Jungreis \cite{CJ}] \label{convergence_groups} A convergence group $G \subset \Homeo(\Ss^1)$ is topologically Fuchsian. \end{theo}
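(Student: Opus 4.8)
The plan is to follow the route of Tukia, completed by Gabai \cite{Gabai} and Casson--Jungreis \cite{CJ}: turn the dynamical hypothesis into $3$-manifold topology by passing to the space of triples, and then invoke the Seifert Fibered Space Conjecture. I would organize it in three stages --- normalizing the action, building a $3$-manifold, and recognizing it as Seifert fibered.

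First I would carry out the standard normalizations. The convergence property produces a limit set $\Lambda(G)\subset\Ss^1$ --- the accumulation points of an orbit of an infinite $G$ --- which is closed, $G$-invariant, and perfect when infinite. If $\Lambda(G)$ has at most two points then $G$ is virtually cyclic and one builds the conjugacy by hand (a finite cyclic group is conjugate to a group of rotations by Ker\'ekj\'art\'o's theorem, an infinite cyclic group to a parabolic or hyperbolic one-parameter subgroup, up to a finite extension). If $\Lambda(G)\ne\Ss^1$ one collapses the complementary intervals by a degree-one monotone map $h$, producing a minimal convergence action whose conjugacy into $\PSL(2,\R)$ as a Fuchsian group of the second kind is then transported back through $h$; so one may assume the action is minimal. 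If $G$ is not discrete, its closure then acts minimally and is a Lie group with the convergence property, and the classification of connected Lie groups acting effectively on $\Ss^1$ --- they factor through a finite cover $\widetilde{\PSL}(2,\R)^{(k)}$, of which only $k=1$ is compatible with the sink/source dynamics the convergence property forces --- conjugates the identity component, and then $G$, into $\PSL(2,\R)$. Hence we may assume $G$ discrete and the action minimal.

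The heart of the matter is the induced action on the space $T$ of positively oriented triples of distinct points of $\Ss^1$, which is canonically homeomorphic to $\PSL(2,\R)\cong UT\h^2$ with $\PSL(2,\R)$ acting by left translations. The convergence property is precisely what makes the diagonal $G$-action on $T$ properly discontinuous: if distinct $g_n$ kept some triple in a compact part of $T$, a subsequence would have sink/source dynamics on $\Ss^1$ and collapse two of its three points; moreover a nontrivial convergence-group element has at most two fixed points on $\Ss^1$, so it acts freely on $T$. Thus $M:=T/G$ is a $3$-manifold with universal cover $\widetilde{\PSL}(2,\R)\cong\R^3$, hence aspherical and irreducible, and $\pi_1(T)=\Z$ yields an extension $1\to\Z\to\pi_1(M)\to G\to 1$ in which the $\Z$ is normal because $T\to M$ is a regular cover with deck group $G$. (When $G$ is not cocompact on $T$ one works with a compact core and the ends, or reduces to finitely generated subgroups; torsion is handled by working with orbifolds, using that finite subgroups of $\Homeo^+(\Ss^1)$ are cyclic and conjugate to rotation groups.) Now the \textbf{Seifert Fibered Space Conjecture} --- a closed orientable irreducible $3$-manifold with infinite $\pi_1$ containing a normal $\Z$ is Seifert fibered --- gives that $M$ Seifert fibers over a $2$-orbifold $\mathcal O$ with $\pi_1\mathcal O=\pi_1(M)/\Z=G$. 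As $G$ is an infinite convergence group acting minimally, $\mathcal O$ is hyperbolic, so $G$ is a Fuchsian group $\Gamma_0\subset\PSL(2,\R)$; identifying the Seifert fibration of $M$ with the canonical fibration of $UT\h^2/\Gamma_0$ gives a homeomorphism $T/G\cong T/\Gamma_0$ compatible with the $\PSL(2,\R)$-symmetry, and passing to the induced action on $\partial\h^2=\Ss^1$ conjugates $G$ to $\Gamma_0$. Undoing the reductions of the first stage finishes the argument.

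The decisive obstacle is the Seifert Fibered Space Conjecture itself: the passage above between circle dynamics and $3$-manifold topology is careful but essentially formal (and was already understood through work of Mess, Scott and Tukia), whereas the conjecture is genuinely deep --- this is exactly the content supplied by Gabai, via essential laminations and a delicate analysis of the ambient manifold, and independently by Casson--Jungreis. A secondary nuisance, as indicated in the parentheticals above, is making the normalizations uniform across non-discrete groups, torsion, non-finitely-generated groups, and Fuchsian groups of the second kind.
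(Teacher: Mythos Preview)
The paper does not prove this theorem at all: it is stated as a background result and attributed to Gabai \cite{Gabai} and Casson--Jungreis \cite{CJ}, then used as a black box in the proof of Theorem \ref{semi_conjugacy}. So there is no ``paper's own proof'' to compare against.

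That said, your sketch is a faithful outline of the actual Tukia--Gabai--Casson--Jungreis route: reduce to a discrete minimal action, pass to the space of ordered triples to obtain a properly discontinuous free action on an open solid torus, produce a $3$-manifold whose fundamental group has an infinite cyclic normal subgroup, and invoke the Seifert Fibered Space Conjecture. You correctly identify that the genuine depth lies entirely in the last step, which is the content of the cited papers, and that the translation between circle dynamics and $3$-manifold topology is comparatively formal. The caveats you flag (non-discrete closure, torsion, non-cocompact case, second-kind groups) are exactly the places where care is needed. For the purposes of this paper, however, none of this is expected of you: the theorem is quoted, not proved.
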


\subsection{Minimal invariant sets} An important object in the study of  groups of circle homeomorphisms is a minimal closed invariant set.  Given a group $G \subset \Homeo(\Ss^1)$,  exactly one of the following conditions is satisfied (see \cite{Gh01} for a proof and more detail):
\begin{enumerate} \item $G$ has a finite orbit \item All orbits of $G$ are dense \item There is a compact $G$-invariant subset $K\subset \Ss^1$ which is infinite and different from $\Ss^1$, such that the orbits of points of $K$ are dense in $K$. \end{enumerate}

\indent In the first case, all finite orbits have the same cardinality.  In the third case, the set $K$ is unique, and it is homeomorphic to a Cantor set. We will call a group $G\subset \Homeo(\Ss^1)$ \textbf{non elementary} if it does not have any finite orbit, and use $L_{G}$ to denote $\Ss^1$ in the second case and  the $G$-invariant compact set $K$ in the third case. 

\subsection{Semi conjugacy} Recall that $h:\Ss^1\to \Ss^1$ is non decreasing of degree one if it is non constant and it has a lift  $\tilde h:\R\to\R$ such that $\tilde h(x+1)=\tilde h(x)+1$ for all $x\in \R$. We say that $\rho_1,\rho_2:\Gamma \to \Homeo(\Ss^1)$ are semi conjugate, or that $(\rho_1,\rho_2,h)$ is a semi conjugate triple, if $\rho_2(\gamma)\circ h=h\circ \rho_1(\gamma)$ for all $\gamma \in \Gamma$.\\
\indent If $a,b\in \Ss^1$, the we denote by $\intoo{a}{b}$ the set $\{ x\in \Ss^1 \vert a<x<b\le a\}$ (and define $\intff{a}{b}$, $\intfo{a}{b}$, $\intof{a}{b}$ in a similar way). If $h:\Ss^1\to \Ss^1$ is non decreasing of degree one, then we denote by $h_l$ (resp. $h_r$) the map that is left (resp. right) continuous and equal to $h$ except at points where $h$ is not left (resp. right) continuous. We define $G(h)\subset \Ss^1\times \Ss^1$ as the union of the segments $\{x\} \times \intff{h_l(x)}{h_r(x)}$ for all $x\in \Ss^1$. Recall that the set of discontinuity points of $h$ is at most countable, so $\intff{h_l(x)}{h_r(x)}=\{ h(x)\}$ except on a countable set.\\
\indent Let us recall a few results of \cite{Gh87b} on semi conjugacy. First, as mentioned in the introduction, the advantage of the definition of semi conjugacy that we use (contrary to the standard definition where $h$ is asked to be continuous) is that semi conjugacy is an equivalence relation.\\
\indent If $\Gamma=\Z$, then $f,g\in \Homeo(\Ss^1)$ are semi conjugate if and only if they have the same rotation number (the rotation number of $f\in \Homeo(\Ss^1)$ can actually be defined as the unique $\alpha\in \Ss^1$ such that $f$ is semi conjugate to the rotation $R_\alpha$).\\
\indent For elementary groups, we have a simple characterisation of semi conjugacy:
\begin{prop} \label{finite_orbit_semi_conjugacy} Let $\rho :\Gamma \to \Homeo(\Ss^1)$ have a finite orbit $E \subset \Ss^1$ with at least two elements. A representation $\tau:\Gamma \to \Homeo(\Ss^1)$ is semi conjugate to $\rho$ if and only if it has a finite orbit $F\subset \Ss^1$ such that there is a cyclic order preserving bijection from $E$ to $F$ which is equivariant under the actions of $\Gamma$. \end{prop}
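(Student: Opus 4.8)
The plan is to compare both representations with a common linear model. Write $E=\{e_0,\dots,e_{n-1}\}$ in cyclic order, where $n\ge 2$ is the number of points of $E$. Since $E$ is a single $\rho$-orbit and each $\rho(\gamma)$ preserves the cyclic order, there is a homomorphism $\psi\colon\Gamma\to\Z/n\Z$, independent of the choice of base point, with $\rho(\gamma)(e_i)=e_{i+\psi(\gamma)}$ for all $i$; transitivity of $\Gamma$ on $E$ forces $\psi$ to be \emph{surjective}. Let $\rho_{n,\psi}\colon\Gamma\to\mathrm{SO}(2)\subset\PSL(2,\R)$ send $\gamma$ to the rotation by $\psi(\gamma)/n$; its orbit of $0$ is $E_0=\{0,\tfrac1n,\dots,\tfrac{n-1}n\}$, again with associated homomorphism $\psi$. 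A short check shows that, given two representations with finite orbits, an equivariant cyclic-order-preserving bijection between those orbits exists if and only if the orbits have the same cardinality and the same associated homomorphism. So the Proposition is equivalent to: $\tau$ is semi conjugate to $\rho$ if and only if $\tau$ has a finite orbit with associated data $(n,\psi)$.

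For one direction, I claim that \emph{every} representation $\sigma$ possessing a finite orbit with data $(n,\psi)$ is semi conjugate to $\rho_{n,\psi}$. Indeed, let $f_0,\dots,f_{n-1}$ be that orbit in cyclic order, and define $h_\sigma\colon\Ss^1\to\Ss^1$ to be constant, equal to $\tfrac in$, on the arc $\intfo{f_i}{f_{i+1}}$. Since $\sigma(\gamma)$ preserves orientation it maps $\intfo{f_i}{f_{i+1}}$ onto $\intfo{f_{i+\psi(\gamma)}}{f_{i+1+\psi(\gamma)}}$, and one reads off directly that $h_\sigma$ is non decreasing of degree one and that $(\sigma,\rho_{n,\psi},h_\sigma)$ is a semi conjugate triple. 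Because semi conjugacy is an equivalence relation \cite{Gh87b}, if $\tau$ has a finite orbit with data $(n,\psi)$ then $\tau\sim\rho_{n,\psi}\sim\rho$; this settles the ``if'' part of the Proposition.

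For the converse, suppose $\tau$ is semi conjugate to $\rho$, hence to $\rho_{n,\psi}$. Using the symmetry of semi conjugacy, fix semi conjugate triples $(\rho_{n,\psi},\tau,h)$ and $(\tau,\rho_{n,\psi},h^*)$, and put $g=h^*\circ h$. Then $g$ is non decreasing, a lift satisfies $\tilde g(t+1)=\tilde g(t)+1$, and $g$ commutes with every $\rho_{n,\psi}(\gamma)$. Picking $\gamma_0$ with $\psi(\gamma_0)=1$ (possible, $\psi$ being onto) gives $g\circ R_{1/n}=R_{1/n}\circ g$, so $\tilde g(t+\tfrac1n)=\tilde g(t)+\tfrac1n$ and hence $g(\tfrac kn)=g(0)+\tfrac kn$ (these are $n$ distinct points of $\Ss^1$); in particular $g$ — and therefore $h$ — is injective on $E_0$. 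Consequently $F:=h(E_0)$ has $n$ points; it is $\tau$-invariant, and $\Gamma$ acts transitively on it since $\psi$ is onto, so $F$ is a finite $\tau$-orbit with $n$ elements. Finally $h|_{E_0}\colon E_0\to F$ is equivariant and preserves the cyclic order (a non decreasing degree-one map injective on $E_0$ preserves its cyclic order), so $F$ has data $(n,\psi)$; composing with the evident equivariant order bijection $E\to E_0$, $e_i\mapsto\tfrac in$, produces the required bijection $E\to F$.

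The one point needing care is the injectivity of $h$ on the model orbit $E_0$ in the converse: a priori a semi conjugacy could merge several points of $E_0$, and this is excluded exactly by passing through the reverse semi conjugacy $h^*$ and using that $g=h^*\circ h$ commutes with the rational rotation $R_{1/n}$ — that is, it is the surjectivity of $\psi$ (a consequence of $E$ being a single orbit) that makes the argument work. The remaining verifications are routine manipulations of cyclic orders and of lifts of monotone degree-one maps.
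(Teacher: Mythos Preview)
The paper does not actually prove Proposition \ref{finite_orbit_semi_conjugacy}: it is stated there as a fact recalled from Ghys \cite{Gh87b}, without argument. So there is no ``paper's own proof'' to compare against.

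Your proof is correct and self-contained. The reduction to the rotation model $\rho_{n,\psi}$ is the natural approach, and both directions are handled cleanly. The one genuinely delicate point --- that a semi conjugacy $h$ from $\rho_{n,\psi}$ to $\tau$ cannot collapse two points of the model orbit $E_0$ --- is exactly where the argument needs Ghys's symmetry of semi conjugacy, and you use it correctly: composing with a reverse semi conjugacy $h^*$ gives a monotone degree-one self-map $g=h^*\circ h$ commuting with $R_{1/n}$, and the lift computation $\tilde g(t+\tfrac1n)=\tilde g(t)+\tfrac1n$ forces $g$ (hence $h$) to separate the points $k/n$. The remaining checks (that $h_\sigma$ is non constant since $n\ge 2$, that an injective monotone degree-one map preserves cyclic order on a finite set, that $F=h(E_0)$ is a single orbit because $\psi$ is onto) are indeed routine, as you say.
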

Consequently, if $\rho$ and $\tau$ are semi conjugate, then $\rho$ is  elementary if and only if $\tau$ is  elementary. If $\omega$ is a continuous volume form on $M_h$, and if $G\subset G_\omega$ is a subgroup, then $\rho_1(G)$ is elementary if and only if $\rho_2(G)$ is elementary.
\begin{defi} Let $\omega$ be a continuous volume form on $M_h$. We say that a subgroup $G\subset G_\omega$ is elementary if $\rho_1(G)$ is elementary. \end{defi}

%We saw that if there is no finite orbit, then  either there is an invariant Cantor set, or all orbits are dense. However, this distinction is not detectable under semi conjugacy:
%\begin{prop} Let $\rho : \Gamma \to \Homeo(\Ss^1)$ be a non elementary representation with an invariant Cantor set $L_{\rho(\Gamma)}$. Then $\rho$ is semi conjugate to a  minimal representation $\hat \rho :\Gamma \to \Homeo(\Ss^1)$. \end{prop}
%The proof can be found in \cite{Gh01} and consists in collapsing the circle along the Cantor set $L_{\rho(\Gamma)}$. Our goal for spatially compact surfaces whose isometry group acts non properly is to show that the collapsed representations $\hat \rho_1^M$ and $\hat \rho_2^M$ are topologically conjugate to actions of subgroups of $\PSL_k(2,\R)$. 

\section{Convergence property for $\rho_1$ and $\rho_2$}
 \label{sec:convergence}
\subsection{Faithfulness of the actions}
It is a classical result in pseudo Riemannian geometry that an isometry with a fixed point where the derivative is the identity must be the identity everywhere (provided the manifold is connected). In other words, isometries are defined by their $1$-jet at any point. This is a crucial fact, since it is the very definition of a rigid geometric structure (in the sense of Gromov). However, the standard proof uses the local existence and uniqueness of geodesics, which is not true  for continuous metrics. We will see that this result remains true in our case, by using the fact that we can still define isotropic geodesics and that they are sufficient in order to get a linearisation. 
\begin{lemma} \label{continuous_rigidity} Let $\omega$ be a continuous volume form on $M_h=\Ss^1\times \Ss^1 \setminus G(h)$. If $\p \in G_\omega\setminus \{Id\}$ fixes a point $(x_0,y_0)\in M_h$, then $\rho_1(\p)'(x_0)\rho_2(\p)'(y_0)=1$ and $\rho_1(\p)'(x_0)\ne 1$.
\end{lemma}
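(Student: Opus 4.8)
The first assertion is a direct substitution. Since $\p$ fixes $(x_0,y_0)$ we have $\rho_1(\p)(x_0)=x_0$ and $\rho_2(\p)(y_0)=y_0$, so evaluating the defining identity $\rho_1(\gamma)'(x)\rho_2(\gamma)'(y)=\omega(x,y)/\omega(\rho_1(\gamma)(x),\rho_2(\gamma)(y))$ at $\gamma=\p$ and $(x,y)=(x_0,y_0)$ gives $\rho_1(\p)'(x_0)\rho_2(\p)'(y_0)=1$. For the second assertion I would argue by contradiction: assume $\rho_1(\p)'(x_0)=1$, hence also $\rho_2(\p)'(y_0)=1$ by the first part, and show this forces $\p=\mathrm{Id}$. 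Write $f=\rho_1(\p)$, $g=\rho_2(\p)$, and recall $f,g\in\Diff^1(\Ss^1)$.

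The heart of the argument is a linearisation along isotropic geodesics, which here are simply the horizontal and vertical lines. The key claim is: if $(a,b)\in M_h$ is fixed by $\p$ with $f'(a)=1$ and $g'(b)=1$, then $f=\mathrm{Id}$ on the open arc $J_b=\{x\mid (x,b)\in M_h\}$ and $g=\mathrm{Id}$ on the open arc $K_a=\{y\mid (a,y)\in M_h\}$. Indeed $\p$ preserves $M_h$ and $\p(x,b)=(f(x),b)$, so $f(J_b)\subset J_b$; specialising the functional equation to $y=b$ and using $g'(b)=1$ gives $f'(x)=\omega(x,b)/\omega(f(x),b)$ for $x\in J_b$. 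Taking a primitive $U$ of $x\mapsto\omega(x,b)$ along $J_b$ (well defined, $C^1$ and strictly monotone since $\omega>0$ is continuous), the chain rule gives $(U\circ f)'=U'$ on $J_b$, hence $U\circ f=U+c$; the fixed point $a\in J_b$ forces $c=0$, and injectivity of $U$ yields $f=\mathrm{Id}$ on $J_b$. The symmetric computation, using $f'(a)=1$, gives $g=\mathrm{Id}$ on $K_a$. This is exactly where the classical geodesic-based rigidity is replaced by something valid for merely continuous $\omega$: the primitive $U$ plays the role of an affine parameter on the null line, and no local existence/uniqueness of geodesics is needed.

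I would then propagate this by a connectedness argument on $M_h$. Set $S_f=\mathrm{int}(\mathrm{Fix}\,f)$, $S_g=\mathrm{int}(\mathrm{Fix}\,g)$, and $\Omega=\{(x,y)\in M_h\mid x\in S_f,\ y\in S_g\}$, an open subset of $M_h$ on which $\p=\mathrm{Id}$. The claim applied at $(x_0,y_0)$ shows $x_0\in S_f$, $y_0\in S_g$, so $\Omega\ne\emptyset$. It is closed in $M_h$: if $(a,b)\in M_h$ is a limit of points $(x_n,y_n)\in\Omega$, then $f\equiv\mathrm{Id}$ near $x_n$ and $g\equiv\mathrm{Id}$ near $y_n$, so $f(a)=\lim x_n=a$, $g(b)=b$, and by continuity of $f',g'$ also $f'(a)=1$, $g'(b)=1$; the claim then upgrades this to $f=\mathrm{Id}$ on $J_b\ni a$ and $g=\mathrm{Id}$ on $K_a\ni b$, so $(a,b)\in\Omega$. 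Since $M_h$ is connected, $\Omega=M_h$; as $G(h)$ has empty interior, the first projection of $M_h$ is dense in $\Ss^1$, so $f=\mathrm{Id}$, and likewise $g=\mathrm{Id}$, whence $\p=\mathrm{Id}$, a contradiction. The main obstacle is precisely this continuation step: pointwise data $f(a)=a$, $f'(a)=1$ does not by itself give $f=\mathrm{Id}$ near $a$ when $\omega$ is only $C^0$, and it is the primitive-of-$\omega$ computation of the second paragraph that upgrades it to an open identity region, making the closedness of $\Omega$ go through.
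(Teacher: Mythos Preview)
Your proof is correct and rests on the same core idea as the paper: linearise along the horizontal isotropic line through the fixed point. Your primitive $U(x)=\int^x\omega(\cdot,b)$ is precisely the inverse of the paper's ODE solution $x(t)$ to $x'=1/\omega(x,b)$, so the computation $(U\circ f)'=U'$ is equivalent to the paper's $\alpha'(t)=1$.

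The only genuine difference is in the propagation step. The paper argues explicitly in two strokes: once $f=\mathrm{Id}$ on the horizontal slice $J_{y_0}=\Ss^1\setminus\intff{a_1}{a_2}$, it observes that the corner $(a_1,h_l(x_0))$ lies in $M_h$, is fixed, and has $f'(a_1)=1$, then re-runs the linearisation along the new horizontal slice through $h_l(x_0)$, which covers $\intff{a_1}{a_2}$. You instead run an open--closed argument on the connected set $M_h$, which is cleaner and symmetric in $f,g$ (the paper treats the case $g\ne\mathrm{Id}$ by saying ``similar''), at the cost of silently using that $M_h$ is connected. Both are short; the paper's version is more hands-on, yours more structural.
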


\begin{proof} Write $f=\rho_1(\p)$ and $g=\rho_2(\p)$. \\ \indent The identity $\omega(f(x),g(y))f'(x)g'(y)=\omega(x,y)$ considered  at $(x_0,y_0)$ shows that $f'(x_0)g'(y_0)=1$. Assume that  $f'(x_0)=1$ (hence $g'(y_0)=1$). Since $\p\ne Id$, let us assume that $f\ne Id$ (the case where $g\ne Id$ is similar).\\
\indent Let $x(t)$ be a maximal solution of the Cauchy problem:
\begin{equation*}
\left\{
\begin{array}{cc}
  x'(t)  = & \frac{1}{\omega(x(t),y_0)}   \\
 x(0) = & x_0   \\
\end{array}
\right.
\end{equation*}
 \indent Not only does $x$ exist (Cauchy-Peano Theorem), but it is also unique (so are solutions to all equations $y'=F(y)$ in $\R$ where $F>0$). It should be seen as a parametrisation of the geodesic $(\Ss^1\times \{y_0\})\cap M_h$ for the pseudo Riemannian metric associated to $\omega$. Since $x'>0$, it is a diffeomorphism from an open interval $I\subset \R$ onto its image $\Ss^1\setminus K$ where $K$ is the set of points $x\in \Ss^1$ such that $(x,y_0)\in G(h)$. Let $\alpha = x^{-1} \circ f \circ x$. A simple calculation shows that $\alpha'(t)=1$ for all $t\in I$. Since $\alpha(0)=0$, we see that $\alpha =Id$ and $f(x)=x$ for all $x$ such that $(x,y_0)\in M_h$.\\
\indent Let $K=\intff{a_1}{a_2}$  and let $\intff{b_1}{b_2}=\intff{h_l(x_0)}{h_r(x_0)}$. If $a_1=a_2$, then $f$ is the identity on a dense subset of $\Ss^1$, so $f=Id$. If $a_1\ne a_2$, then $(a_1,b_1)\in M_h$ is a fixed point of $\p$ such that $f'(a_1)=1$, so $f$ is the identity on all points $x$ such that $(x,b_1)\in M_h$, which includes $\intff{a_1}{a_2}$, so $f=Id$.

\end{proof}
The first consequence of this rigidity is the faithfulness of the actions $\rho_1$ and $\rho_2$.
\begin{coro} Let $\omega$ be a continuous volume form on $M_h$. The representations $\rho_1, \rho_2:G_\omega \to \Diff(\Ss^1)$ are faithful. \end{coro}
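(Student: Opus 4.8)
The plan is to derive everything from the rigidity Lemma \ref{continuous_rigidity}. Faithfulness of $\rho_1$ (resp. $\rho_2$) means exactly $\ker\rho_1=\{\mathrm{Id}\}$ (resp. $\ker\rho_2=\{\mathrm{Id}\}$), and since the involution $(x,y)\mapsto(y,x)$ carries $(M_h,\omega)$ to $(M_{h^*},\omega^*)$ for the Ghys dual $h^*$ while exchanging $\rho_1$ and $\rho_2$ and sending a continuous volume form to a continuous volume form, it suffices in principle to treat $\rho_1$; I will nonetheless record the direct argument for $\rho_2$ as well. In every case the strategy is the same: given $\varphi\in G_\omega\setminus\{\mathrm{Id}\}$ with $\rho_1(\varphi)=\mathrm{Id}$ or $\rho_2(\varphi)=\mathrm{Id}$, exhibit a point of $M_h$ fixed by $\varphi$ at which $\rho_1(\varphi)$ has derivative $1$, contradicting Lemma \ref{continuous_rigidity}.

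Suppose first $\rho_1(\varphi)=\mathrm{Id}$, so $\varphi(x,y)=(x,g(y))$ with $g=\rho_2(\varphi)$. The semi conjugacy relation gives $g\circ h=h\circ\rho_1(\varphi)=h$, so $g$ fixes every point of $h(\Ss^1)$, a set with at least two elements since $h$ is non constant. Pick $y_0\in h(\Ss^1)$. I claim there is $x_0$ with $(x_0,y_0)\in M_h$: if the whole horizontal circle $\Ss^1\times\{y_0\}$ were contained in $G(h)$, then $y_0\in\intff{h_l(x)}{h_r(x)}$ for every $x$, hence $h(x)=y_0$ at every continuity point of $h$; as the set of continuity points is co-countable, this would force $h$ to be constant equal to $y_0$, contradicting that $h$ is non constant of degree one. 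With such an $x_0$, the point $(x_0,y_0)\in M_h$ is fixed by $\varphi$ while $\rho_1(\varphi)'(x_0)=1$; since $\varphi\ne\mathrm{Id}$ this contradicts Lemma \ref{continuous_rigidity}, so $\ker\rho_1=\{\mathrm{Id}\}$.

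Suppose instead $\rho_2(\varphi)=\mathrm{Id}$, so now $h\circ\rho_1(\varphi)=h$ and $f:=\rho_1(\varphi)$ preserves every level set $h^{-1}(c)$, which by monotonicity of $h$ is a sub-arc of $\Ss^1$. If every such level set is a single point then $h$ is injective and $h\circ f=h$ gives $f=\mathrm{Id}$, so $\varphi=\mathrm{Id}$, excluded. Otherwise choose $c$ with $h^{-1}(c)$ a non-degenerate arc; since $f$ is an orientation preserving homeomorphism of $\Ss^1$ that preserves this arc setwise, it fixes its endpoints, say $x_0$. The arc $\intff{h_l(x_0)}{h_r(x_0)}$ is proper in $\Ss^1$, so one can choose $y_0$ with $(x_0,y_0)\in M_h$; then $\varphi$ fixes $(x_0,y_0)\in M_h$ and $\rho_2(\varphi)'(y_0)=1$, which through the identity $\rho_1(\varphi)'(x_0)\rho_2(\varphi)'(y_0)=1$ of Lemma \ref{continuous_rigidity} forces $\rho_1(\varphi)'(x_0)=1$, again contradicting the same Lemma. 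Hence $\ker\rho_2=\{\mathrm{Id}\}$.

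The whole weight of the argument is carried by Lemma \ref{continuous_rigidity}; the remaining ingredients — that the filled graph $G(h)$ of a non constant monotone degree-one map contains no complete horizontal circle, that its level sets are arcs, and that an orientation preserving circle homeomorphism preserving such an arc fixes its endpoints — are routine once the definitions of $h_l$, $h_r$ and $G(h)$ are unwound, so I do not expect a genuine obstacle here.
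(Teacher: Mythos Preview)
Your argument is correct and follows the same idea as the paper: exhibit a fixed point of $\varphi$ in $M_h$ at which $\rho_1(\varphi)'=1$, then invoke Lemma~\ref{continuous_rigidity}. The paper only spells out the case $\rho_1(\varphi)=\mathrm{Id}$ (in slightly fewer words, without justifying that the horizontal circle through $y_0$ meets $M_h$), leaving the $\rho_2$ case implicit; your explicit treatment of $\ker\rho_2$ via level sets of $h$, and the duality remark via $h^*$, are welcome additions but not a different approach.
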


\begin{proof} Assume that $\rho_1(\p)=Id$. Then $g=\rho_2(\p)$ satisfies $g\circ h=h$, which implies that $g$ has fixed points. If $g(y)=y$, then choose $x\in \Ss^1$ such that $(x,y)\notin G(h)$. Then $(x,y)$ is a fixed point of $\p$ such that $f'(x)=1$. Lemma \ref{continuous_rigidity} implies that $\p=Id$. \end{proof}

\subsection{Elementary groups} We will now give a proof of Theorem \ref{main_semi_conjugacy} for elementary groups. We start with stabilizers of points.\\
\indent Recall that the affine group $\Aff(\R)$ can be realised as a subgroup of $\PSL(2,\R)$ as the stabilizer of a point in $\Ss^1$.
\begin{lemma} \label{stabilizer} Let $h$ be an non decreasing map of degree one of $\Ss^1$, and let $\omega$ be a continuous volume form on $M_h=\Ss^1\times \Ss^1\setminus G(h)$. Let $x_0\in \Ss^1$, and set  $G=\{ \p \in G_\omega \vert \rho_1(\p)(x_0)=x_0\}$. There is a  representation $\rho:G\to \Aff(\R)\subset \PSL(2,\R)$ that  is semi conjugate to $\rho_1$. \end{lemma}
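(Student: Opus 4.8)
The plan is to realise the affine model geometrically, by parametrising an isotropic geodesic of the metric associated to $\omega$ along which $\rho_1(G)$ acts and reading off the affine structure from this parametrisation.

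First I would produce a point fixed by all of $\rho_2(G)$. Since $(\rho_1(\p),\rho_2(\p))$ preserves $G(h)$ and $\rho_1(\p)(x_0)=x_0$ for every $\p\in G$, the map $\rho_2(\p)$ preserves the (possibly degenerate) arc $\intff{h_l(x_0)}{h_r(x_0)}$; being orientation preserving it fixes both of its endpoints, so I may fix one of them, say $y^*$, with $\rho_2(\p)(y^*)=y^*$ for all $\p\in G$. Set $K=\{x\in\Ss^1:(x,y^*)\in G(h)\}$. Since $h_l$ and $h_r$ are non decreasing, $K$ is a closed arc; it contains $x_0$, and it is a proper subset of $\Ss^1$ because a horizontal circle cannot lie inside $G(h)$ ($h$ being non constant). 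As $\rho_2(\p)$ fixes $y^*$, the set $K$ — and hence the non empty open arc $U=\Ss^1\setminus K$ — is $\rho_1(G)$-invariant, and by construction $U\times\{y^*\}\subset M_h$, so $x\mapsto\omega(x,y^*)$ is continuous and positive on $U$.

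Next, exactly as in the proof of Lemma~\ref{continuous_rigidity}, I would let $\gamma\colon I\to U$ be the maximal solution of $\gamma'(t)=1/\omega(\gamma(t),y^*)$ with some chosen initial value in $U$: it is unique (the right hand side being positive) and is an increasing diffeomorphism from an open interval $I\subset\R$ onto $U$. For $\p\in G$, write $f=\rho_1(\p)$ and $g=\rho_2(\p)$; then $f(U)=U$ and $g(y^*)=y^*$, so $\mu_\p:=\gamma^{-1}\circ f\circ\gamma$ is a well defined diffeomorphism of $I$. Evaluating the area preserving identity at points $(x,y^*)\in M_h$ gives $f'(x)=\omega(x,y^*)/\bigl(\omega(f(x),y^*)\,g'(y^*)\bigr)$ on $U$, and feeding this into the chain rule for $\mu_\p'$ collapses everything to $\mu_\p'(t)\equiv 1/g'(y^*)$. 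Hence each $\mu_\p$ is the restriction to $I$ of an orientation preserving affine map of $\R$ with linear part $1/\rho_2(\p)'(y^*)>0$; since $\p\mapsto\mu_\p$ is visibly a homomorphism, this defines a representation $\rho\colon G\to\Aff(\R)\subset\PSL(2,\R)$ — with $\Aff(\R)$ realised as the stabiliser of $\infty$ in $\PSL(2,\R)$ acting on $\R\proj^1=\R\cup\{\infty\}$ — satisfying $\rho(\p)\circ\gamma^{-1}=\gamma^{-1}\circ\rho_1(\p)$ on $U$. Finally I would upgrade $\gamma^{-1}$ to the desired semi conjugacy by setting $k=\gamma^{-1}$ on $U$ and $k\equiv\infty$ on $K$: because $\gamma^{-1}$ maps $U$ increasingly onto the subarc $I$ of $\R\proj^1$ and $\infty$ lies on the complementary arc, $k$ is non decreasing of degree one, and it is equivariant since $\rho_1(\p)$ preserves $K$ while $\infty$ is fixed by $\rho(G)$. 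Thus $(\rho_1|_G,\rho,k)$ is a semi conjugate triple, as required.

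I expect the crux — the only step that is more than bookkeeping — to be the identity $\mu_\p'\equiv 1/g'(y^*)$: this is where the hypothesis that $\omega$ is a genuine volume form (and not merely a Lorentzian conformal class, which is far from rigid) is essential, and it is a mild variant of the linearisation at a fixed point carried out in Lemma~\ref{continuous_rigidity}. The discontinuities of $h$ are handled once and for all in the first step, by choosing $y^*$ to be an endpoint of the vertical fibre over $x_0$ rather than a value of $h$; and one should note that the construction never degenerates, since a bounded interval $I$ would make every $\mu_\p$ fix the two finite endpoints of $I$, forcing $\rho_1(\p)|_U=Id$ and hence $\p=Id$ by Lemma~\ref{continuous_rigidity}, i.e.\ $G=\{Id\}$.
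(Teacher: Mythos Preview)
Your proof is correct and follows essentially the same route as the paper: fix a point $y^*$ for $\rho_2(G)$, parametrise the horizontal isotropic geodesic $U\times\{y^*\}$ via the ODE $\gamma'=1/\omega(\gamma,y^*)$, compute that the conjugated action has constant derivative $1/\rho_2(\p)'(y^*)$, and collapse the complement $K$ to the fixed point $\infty$ of $\Aff(\R)$.

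The only noteworthy differences are cosmetic. You take $y^*\in\{h_l(x_0),h_r(x_0)\}$, which is slightly more careful about discontinuities than the paper's bare choice $b=h(x_0)$; correspondingly your set $K=\{x:(x,y^*)\in G(h)\}$ is a cleaner description of the complement of the domain of the ODE than the paper's $\overline{h^{-1}(\{b\})}$. Conversely, the paper explicitly treats the case where $I$ is a half-line by reparametrising (via an affine change, or $\log$) so that $I=\R$ and the semiconjugacy is surjective. You only rule out bounded $I$ via Lemma~\ref{continuous_rigidity} and leave the half-line case implicit; your map $k$ is then not surjective and has a genuine jump at one endpoint of $K$, but it is still non decreasing of degree one, so the argument goes through. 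If you want the writeup to be airtight you might add one sentence noting that in the half-line case $\mu_\p$ must fix the finite endpoint of $I$ (hence extends to an affine map of $\R$), and that $\infty$ being a boundary point of $\overline{I}$ in $\R\proj^1$ still places $k(K)$ correctly in the cyclic order.
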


\begin{proof}   Let us assume that $G$ is non trivial.\\
\indent  Let $b=h(x_0)$. If $\p \in G$, then $\rho_2(\p)(b)=b$, i.e. $\p$ preserves the horizontal line $\Ss^1\times \{b\}$. Fix $a\in \Ss^1$ such that $(a,b)\in M_h$, and let $x$ be a maximal solution  of the Cauchy problem:
\begin{equation*}
\left\{
\begin{array}{cc}
  x'(t)  = & \frac{1}{\omega(x(t),b)}   \\
 x(0) = & a   \\
\end{array}
\right.
\end{equation*}
\indent Just as in Lemma \ref{continuous_rigidity}, it is a parametrisation of the horizontal geodesic passing through $(a,b)$, and it is a  diffeomorphism from an open interval $I\subset \R$ onto $\Ss^1\setminus \overline{h^{-1}(\{b\})}$. If $\p \in G$, then a simple calculation shows that $(x^{-1} \circ \rho_1(\p) \circ x)'(t) = \frac{1}{\rho_2(\p)'(b)}$ for all $t\in \R$ such that  $x(t)$ is defined. This shows that $x$ conjugates the action of $G$ on $\Ss^1\setminus \overline{h^{-1}(\{b\})}$ with a subgroup of $\Aff(\R)$.\\
\indent Since $G$ is non trivial, the interval $I$ has non trivial affine diffeomorphisms, so $I$ is either $\R$, either affinely equivalent to $\intoo{-\infty}{0}$ or $\intoo{0}{+\infty}$, in which case the action of the affine group is differentially conjugate to the action of $\R$ on itself by translations. Therefore, up to changing $I$ and $x$ (while preserving the affine structure), we can assume that $I=\R$.  \\
\indent Let $\psi : \Ss^1\to \Ss^1=\R\cup \{\infty\}$ be defined by $\psi =x^{-1}$ on $\Ss^1\setminus \overline{h^{-1}(\{b\})}$ and $\psi\equiv \infty$ on $\overline{h^{-1}(\{b\})}$. It provides a semi conjugacy between $\rho_1(G)$ and a representation  $\rho:G\to \Aff(\R)$.\\
%\indent If $\rho(\p)=Id$ for some $\p\in G$, then $\p$ fixes the horizontal line $(\Ss^1\times \{ b \})\cap M_h$. This shows that $\p(a,b)=(a,b)$, and $\rho_1(\p)'(a)=1$. Lemma \ref{continuous_rigidity} implies that $\p=Id$, i.e. that $\rho$ is faithful.
\end{proof}

\begin{prop} \label{elementary_semi_conjugacy} Let $h$ be an non decreasing map of degree one of $\Ss^1$, and let $\omega$ be a continuous volume form on $M_h=\Ss^1\times \Ss^1\setminus G(h)$.  Assume that $G\subset G_\omega$ is elementary. Then there is a  representation $\rho : G\to \PSL(2,\R)$ that is semi conjugate to $\rho_1$. 
\end{prop}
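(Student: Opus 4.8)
The plan is to reduce everything to two results already in hand: Lemma~\ref{stabilizer} when $\rho_1(G)$ has a global fixed point, and Proposition~\ref{finite_orbit_semi_conjugacy} when it has a finite orbit with at least two points. Since $G$ is elementary, $\rho_1(G)$ has a finite orbit, and (as recalled after the trichotomy of minimal invariant sets) all of its finite orbits share the same cardinality $n\ge 1$. So first I would split into the cases $n=1$ and $n\ge 2$; if $G=\{Id\}$ the statement is trivial via the identity, so we may assume $G$ nontrivial.

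In the case $n=1$, the orbit is a point $x_0\in\Ss^1$ fixed by $\rho_1(G)$, hence $G\subset\{\p\in G_\omega\mid \rho_1(\p)(x_0)=x_0\}$. Lemma~\ref{stabilizer} applied to this stabilizer produces a representation into $\Aff(\R)\subset\PSL(2,\R)$ together with a semi conjugacy to $\rho_1$; restricting that representation to $G$ and keeping the same semi conjugating map finishes this case. This is really the substantive case, and the work is already done in Lemma~\ref{stabilizer}, whose proof is the genuinely geometric step (flowing along an isotropic geodesic to linearise the $G$-action on a horizontal leaf).

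In the case $n\ge 2$, I would first note that every $\p\in G_\omega$ acts on each circle factor by an orientation-preserving diffeomorphism, since preserving the time orientation forces $\rho_1(\p)'>0$ and $\rho_2(\p)'>0$ at every point. Hence $G$ acts on the cyclically ordered $n$-element orbit $E$ of $\rho_1(G)$ through rotations, and since the action is transitive it is given by a surjection $r\colon G\to\Z/n\Z$. Picking an elliptic element $R\in\PSL(2,\R)$ of order $n$ and setting $\rho(\gamma)=R^{r(\gamma)}$ defines a representation $\rho\colon G\to\langle R\rangle\subset\PSL(2,\R)$ whose image has an $n$-point orbit $F$ that is cyclically permuted by $R$ exactly as $E$ is rotated by the image of $r$. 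Matching the cyclic orders of $E$ and $F$ yields a $G$-equivariant, cyclic-order-preserving bijection, so Proposition~\ref{finite_orbit_semi_conjugacy} gives that $\rho$ is semi conjugate to $\rho_1$, and we are done. The only point requiring genuine care is the fixed-point case $n=1$, and that obstacle is exactly what Lemma~\ref{stabilizer} overcomes; everything else is combinatorial bookkeeping with the classification of elementary actions up to semi conjugacy.
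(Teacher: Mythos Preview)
Your proof is correct and follows essentially the same approach as the paper: split according to whether the finite orbit has one point (and invoke Lemma~\ref{stabilizer}) or at least two points (and build a rotation representation via the cyclic permutation morphism $G\to\Z/n\Z$, then apply Proposition~\ref{finite_orbit_semi_conjugacy}). The only cosmetic differences are that the paper uses the explicit rotation $R_{\sigma(\p)/k}\in\mathrm{SO}(2,\R)$ rather than a generic order-$n$ elliptic, and does not separately spell out the trivial-group case.
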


\begin{proof}  Let $L_1\subset \Ss^1$ be a finite orbit for $\rho_1(G)$. If $\sharp L_1 =1$, then Lemma \ref{stabilizer} applies. If $\sharp L_1=k\ge 2$, then let $L_1=\{ x_{\overline 1},\dots ,x_{\overline k}\}$ (where the indices are in $\Z/k\Z$, and $x_{\overline 1} <\cdots <x_{\overline k} < x_{\overline 1}$). Since elements of $\rho_1(G)$ preserve the cyclic order, there is a morphism $\sigma : G\to \Z/k\Z$ such that $\rho_1(\p)(x_{ i})=x_{ i+\sigma(\p)}$ for all $i\in \Z/k\Z$ and $\p \in G$. Since $G$ acts transitively on $L_1$, we necessarily have $\sigma(G)=\Z/k\Z$. Then $\p \mapsto R_{\frac{\sigma(\p)}{k}}$ is a representation of $G$ in $\mathrm{SO}(2,\R)\subset \PSL(2,\R)$ that is semi conjugate to $\rho_1$ by  Proposition \ref{finite_orbit_semi_conjugacy}.
\end{proof}

Note that the reason why we had to start with stabilizers of points is that the semi conjugacy defined in the proof of Proposition \ref{elementary_semi_conjugacy} is a constant map in the case where $\sharp L_1=1$, hence does not satisfy our definition of a non decreasing map of degree one.

\subsection{The general case} Let us reformulate Theorem \ref{main_semi_conjugacy} in terms of the group $G_\omega$.

\begin{theo} \label{semi_conjugacy} Let $h$ be an non decreasing map of degree one of $\Ss^1$, and let $\omega$ be a continuous volume form on $\Ss^1\times \Ss^1\setminus G(h)$. The group $\rho_1(G_\omega)$ is semi conjugate to a subgroup of $\PSL(2,\R)$.  \end{theo}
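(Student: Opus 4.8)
The plan is to reduce Theorem \ref{semi_conjugacy} to the case where $\rho_1(G_\omega)$ is non elementary and acts minimally on $\Ss^1$, then show that in this situation $\rho_1(G_\omega)$ (or rather a collapsed version of it) is a convergence group, so that Theorem \ref{convergence_groups} of Gabai and Casson-Jungreis produces a conjugacy into $\PSL(2,\R)$. Since semi conjugacy is an equivalence relation (Ghys), a conjugacy of the collapsed action into $\PSL(2,\R)$ yields a semi conjugacy of $\rho_1(G_\omega)$ into $\PSL(2,\R)$.

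First I would dispose of the elementary case, which is already done: by Proposition \ref{elementary_semi_conjugacy}, if $G_\omega$ is elementary then $\rho_1(G_\omega)$ is semi conjugate to a subgroup of $\PSL(2,\R)$. So assume $G_\omega$ is non elementary. Then $\rho_1(G_\omega)$ has no finite orbit, so either all its orbits are dense, or there is a unique minimal invariant Cantor set $L_{\rho_1}$. In the second case I would pass to the collapsed action $\mathrm{Col}(\rho_1)$, whose orbits are dense; since $\mathrm{Col}(\rho_1)$ is semi conjugate to $\rho_1$ and semi conjugacy is transitive, it suffices to prove that $\mathrm{Col}(\rho_1)$ is semi conjugate to a $\PSL(2,\R)$-representation. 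So after these reductions I may assume $\rho_1(G_\omega)$ acts with all orbits dense.

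The heart of the argument is then to verify the convergence property for a representation with dense orbits: given a sequence $\p_n\in G_\omega$, I want to show that $(\rho_1(\p_n))$ is either equicontinuous along a subsequence or has north-south dynamics along a subsequence. The idea is to use the area-preserving relation $\rho_1(\p)'(x)\rho_2(\p)'(y)=\omega(x,y)/\omega(\rho_1(\p)(x),\rho_2(\p)(y))$ together with compactness of $\Ss^1\times \Ss^1$ and positivity and continuity of $\omega$ away from $G(h)$. Writing $f_n=\rho_1(\p_n)$, $g_n=\rho_2(\p_n)$, one extracts subsequences so that $f_n\to F$ and $g_n\to G$ pointwise for some non decreasing degree-one maps $F,G$ (this is the standard Helly-type compactness for monotone maps). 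The key dichotomy: either $F$ is a homeomorphism — and then, combined with the area-preserving identity one shows the convergence is uniform and the limit lies in $G_\omega$, giving equicontinuity — or $F$ is not injective, in which case $F$ collapses a nontrivial interval, and one must argue (using the relation controlling $f_n' g_n'$ and the fact that $\omega$ stays bounded away from $0$ and $\infty$ on compact subsets of $M_h$) that $F$ must in fact be a single-point-valued map off one point, i.e. $f_n$ has north-south dynamics. This is exactly the structure of the proof in \cite{Monclaira} for the case $h=\mathrm{Id}$, adapted to the graph $G(h)$: the role of the diagonal is now played by $G(h)$, and one works with the isotropic (horizontal and vertical) geodesics parametrised as in Lemma \ref{continuous_rigidity} and Lemma \ref{stabilizer}.

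The main obstacle I expect is the non-injective case: ruling out the "intermediate" behaviour where $F$ collapses an interval but is not degenerate to a point. Here the continuity — rather than smoothness — of $\omega$ is what makes the argument delicate, since one cannot invoke curvature or completeness of geodesics; instead one must run a self-contained ODE/monotonicity argument on the isotropic geodesics, exploiting that each horizontal geodesic $(\Ss^1\times\{y\})\cap M_h$ is parametrised by a full interval $I_y$ with $x_y'(t)=1/\omega(x_y(t),y)>0$, and that $\rho_1(\p_n)$ acts on this parametrised line by an affine-like distortion governed by $\rho_2(\p_n)'(y)$. Controlling how these distortions behave as $n\to\infty$, uniformly enough to conclude north-south dynamics for $f_n$, is the technical crux. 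Once the convergence property is established, Theorem \ref{convergence_groups} finishes the proof, and unwinding the reductions (collapsing, then Ghys's symmetry of semi conjugacy) gives the statement for $\rho_1(G_\omega)$ itself.
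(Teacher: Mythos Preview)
Your overall architecture --- treat the elementary case via Proposition~\ref{elementary_semi_conjugacy}, pass to a collapsed action, prove the convergence property, then invoke Theorem~\ref{convergence_groups} --- matches the paper. The gap is in the collapsing step, and it propagates into the ``heart of the argument''.

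You collapse $\rho_1$ along the complement of its dynamical limit set $L_{\rho_1}$ to force dense orbits. The paper does something different and sharper: it lets $U_1$ be the union of open intervals on which $h$ is constant and $U_2$ the union of open intervals between the discontinuities of $h$, and collapses $\rho_1$ along $U_1$ and $\rho_2$ along $U_2$ via maps $p_1,p_2$. The payoff is that the induced $\hat h$ is a \emph{homeomorphism} satisfying $\hat h\circ\hat\rho_1=\hat\rho_2\circ\hat h$. This coordinated collapse is what makes the derivative estimates go through: when one supposes three distinct limit values $\hat\alpha,\hat\beta,\hat\gamma$ for $\hat f_n$, one can choose lifts $a,b,c,\alpha,\beta,\gamma$ through $p_1$ and companion points $a',\alpha',\dots$ through $p_2$ and $\hat h$, and the homeomorphism $\hat h$ guarantees that $(f_n(x),g_n(a'))$ remains in a compact subset of $M_h$, so that $\omega$ is bounded above and below there. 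That compactness is exactly what feeds the Fatou/Ascoli step.

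Your reduction, by contrast, discards $h$, $\omega$ and $\rho_2$: once you write ``I may assume $\rho_1(G_\omega)$ acts with all orbits dense'', there is no area-preserving triple left, yet in the next paragraph you invoke $f_n'(x)g_n'(y)=\omega(x,y)/\omega(f_n(x),g_n(y))$ --- a relation that belongs to the \emph{original} $\rho_1$, not the collapsed one. If you try to lift back to the original action and run the estimate there, you lack the mechanism (the homeomorphism $\hat h$) that keeps the relevant pairs uniformly away from $G(h)$; without it the ratio $\omega(x,y)/\omega(f_n(x),g_n(y))$ is uncontrolled and your dichotomy on the Helly limit $F$ cannot be resolved. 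Note too that even when $\rho_1$ already has dense orbits (so your collapsing is trivial), one still has $U_1=\emptyset$ but possibly $U_2\ne\emptyset$; the paper must and does still collapse on the $\rho_2$ side to get $\hat h\in\Homeo(\Ss^1)$.

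Finally, the concrete argument in the paper is not a Helly-limit dichotomy but a two-step counting: first rule out three distinct limit values (three would force $g_n'$ bounded below at three points, hence $f_n'$ uniformly bounded on each of the three arcs, hence equicontinuity by Ascoli), then rule out two nondegenerate limit intervals by the same mechanism. Your sketch gestures at \cite{Monclaira} but does not supply this step, and the adaptation from $h=\mathrm{Id}$ to general $h$ is precisely where the $h$-based collapsing earns its keep.
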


\begin{proof} If $h$ has a finite number of values, then $G_\omega$ is elementary, and we can apply Proposition \ref{elementary_semi_conjugacy}. We can now assume that $h$ is not finite valued. \\
\indent Let $U_1$ be the union of the  open intervals where $h$ is constant, and let $U_2$ be the reunion of open intervals between discontinuities of $h$. The complement of $U_1$ (resp. of $U_2$) is a closed $\rho_1$-invariant (resp. $\rho_2$-invariant) set.\\% which implies that $U_1\subset \Ss^1\setminus L_1$ (resp. $U_2\subset \Ss^1\setminus L_2$).\\
\indent Let $p_1,p_2 : \Ss^1\to \Ss^1$ be continuous non decreasing maps of degree one such that the intervals where $p_i$ is constant are exactly the connected components of $U_i$. They induce representations $\hat \rho_1, \hat \rho_2 : G_\omega \to \Homeo(\Ss^1)$ such that $\hat \rho_i \circ p_i =p_i\circ \rho_i$, and we now have a homeomorphism $\hat h$ such that $\hat h\circ \hat \rho_1=\hat \rho_2\circ \hat h$. We are going to show that $\hat \rho_1$ and $\hat \rho_2$ are topologically Fuchsian, i.e. that they satisfy the convergence property.\\
\indent Let $(\p_n)\suite \in G_\omega^\N$ be a sequence such that $\hat \rho_1(\p_n)$ has no equicontinuous subsequence.\\
To simplify the notations, we will set $f_n=\rho_1(\p_n)$, $g_n=\rho_2(\p_n)$, $\hat f_n =\hat \rho_1(\p_n)$ and $\hat g_n=\hat \rho_2(\p_n)$.\\
\indent We are first going to show that the sequences $\hat f_n(x)$ have at most two distinct limit points. Indeed, assume that there are three distinct points $\hat \alpha < \hat \beta < \hat\gamma < \hat \alpha$ in $\Ss^1$ and $\hat a, \hat b,\hat c\in \Ss^1$ such that $\hat f_n(\hat a) \to \hat \alpha$, $\hat f_n(\hat b) \to \hat \beta$ and $\hat f_n (\hat c) \to \hat \gamma$. We consider a subsequence such that $f_n(a)\to \alpha$, $f_n(b)\to \beta$ and $f_n(c)\to \gamma$ for some lifts  $a,b,c,\alpha,\beta,\gamma$ with respect to $p_1$.\\
\indent We also set $\hat \alpha'=\hat h(\hat \alpha), \hat a'=\hat h(\hat a), \dots$ and choose lifts $a',b',c',\alpha',\beta',\gamma'$ with respect to $p_2$. \\
\indent First, let us assume that $g'_n(a')\to 0$. Let $K$ be a compact interval of $\Ss^1\setminus \{\alpha\}$ that contains $\beta$ and $\gamma$ in its interior. There is $n_0\in \N$ such that $f_n(x)\in K$ for all $x\in \intff{b}{c}$ and all $n\ge n_0$. This implies that the sequence $(f_n(x),g_n(a'))$ stays in a compact set of $M_h$, and:
$$f'_n(x)=\frac{1}{g'_n(a')} \frac{\omega(x,a')}{\omega(f_n(x),g_n(a'))} \to +\infty$$
\indent Fatou's Lemma implies that $\int_b^cf'_n(x)dx \to \infty$, which is impossible because $\int_{\Ss^1} f'_n(x)dx =1$. This shows that the sequence $g'_n(a')$ cannot converge to $0$, nor can any subsequence, and there is a constant $C>0$ such that $g'_n(a')\ge C$ for all $n\in \N$. Since $a$, $b$ and $c$ have similar roles, we can also assume that $g'_n(b')\ge C$ and $g'_n(c')\ge C$.\\
\indent We now see that $f'_n(x)=\frac{1}{g'_n(a')}\frac{\omega(x,a')}{\omega(f_n(x),g_n(a'))}$ is uniformly bounded on $\intff{b}{c}$, therefore $f_n$ is equicontinuous on this interval and up to a subsequence we can assume that $f_n$ converges uniformly on $\intff{b}{c}$ (Ascoli's Theorem). Since $b$ and $c$ have a similar role to $a$, there is a subsequence that converges uniformly on $\intff{c}{a}$ and on $\intff{a}{b}$, therefore on all of $\Ss^1$, which is impossible because we assumed that $\hat f_n$ has no equicontinuous subsequence.\\

\indent We now know that there are at most two possible limits for $f_n$, say $\hat \alpha$ and $\hat \beta$ (we keep similar notations for $\hat a$, $\hat a'$, $a$, $a'$, $b$ \dots). Let $A$ (resp. $B$) be the set of points $x\in \Ss^1$ such that $\hat f_n(x)\to \hat \alpha$ (resp. $\hat f_n(x)\to \hat \beta$).\\
\indent If $x,y\in A$, then one of the two intervals $\intff{\hat f_n(x)}{\hat f_n(y)}$ and  $\intff{\hat f_n(y)}{\hat f_n(x)}$ shrinks to $\{ \hat \alpha\}$. This implies that one of the intervals $\intff{x}{y}$ and $\intff{y}{x}$ is included in $A$, hence $A$ is connected, and it is an interval of $\Ss^1$. The same goes for $B$.\\
\indent Assume that neither $A$ nor $B$ is reduced to a point or void. Let $\hat y\in \hat h(\mathring B)$, and $y\in p_2^{-1}(\hat y)$.  First, assume that $g'_n(y)\to 0$. Then $f'_n(x)\to \infty$ for all $x\in p_1^{-1}(A)$, and $\int_{p_1^{-1}(A)} f'_n(x)dx \to \infty$, which is absurd. This shows that there is $C>0$ such that $g'_n(y)\ge C$ for all $n\in \N$. Then $f'_n$ is uniformly bounded on $p_1^{-1}(A)$, and the sequence $\hat f_n$ is equicontinuous on $A$. Similarly, the sequence $\hat g_n$ is equicontinuous on $\hat h(B)$, and $\hat f_n$ is equicontinuous on $B$. Since we can choose a subsequence such that $A\cup B=\Ss^1$ (by making $f_n$ convergent on a dense countable subset of $\Ss^1$), this implies that $\hat f_n$ is equicontinuous, which is absurd. Therefore $A$ or $B$ contains at most one point, and the sequence $\hat f_n$ satisfies the convergence property.
\end{proof}

As a corollary of the proof, we get the convergence property for $\rho_1$ and $\rho_2$ when $h\in \Homeo(\Ss^1)$, i.e. Theorem \ref{main_conjugacy}.

\begin{theo} Let $h\in \Homeo(\Ss^1)$, and let $\omega$ be a continuous volume form on $\Ss^1\times \Ss^1\setminus G(h)$. The group $\rho_1(G_\omega)$ is topologically Fuchsian. \end{theo}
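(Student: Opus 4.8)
The plan is to deduce this from the proof of Theorem~\ref{semi_conjugacy} together with Theorem~\ref{convergence_groups}. The point is that when $h\in\Homeo(\Ss^1)$ there is nothing to collapse: a non decreasing degree one map which is a homeomorphism has a strictly increasing lift, so it has no interval of constancy, and being continuous it has no discontinuity. Thus, in the notation of the proof of Theorem~\ref{semi_conjugacy}, the open sets $U_1$ and $U_2$ are empty, one may take $p_1=p_2=Id$, and consequently $\hat\rho_1=\rho_1$, $\hat\rho_2=\rho_2$ and $\hat h=h$. Moreover $h$ is surjective, hence not finite valued, so we are in the general branch of that proof rather than the elementary case.

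With this identification, I would simply replay the argument of the proof of Theorem~\ref{semi_conjugacy}. Fix a sequence $(\p_n)\suite\in G_\omega^\N$ such that $f_n=\rho_1(\p_n)$ admits no equicontinuous subsequence. One first shows the sequences $f_n(x)$ have at most two limit points: assuming three, one excludes $g_n'(a')\to 0$ via Fatou's Lemma (it would force $\int_b^c f_n'\to\infty$ against $\int_{\Ss^1}f_n'=1$), obtains a lower bound $g_n'(a'),g_n'(b'),g_n'(c')\ge C>0$, hence $f_n'$ bounded on $\intff{b}{c}$, and then Ascoli's Theorem on the three arcs gives equicontinuity of $f_n$ on all of $\Ss^1$, a contradiction. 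One then shows that the two "attracting" arcs $A,B$ cannot both have more than one point, by the same Fatou/Ascoli dichotomy applied to $f_n$ on $A$ and to $g_n$ on $h(B)$, after passing to a subsequence with $A\cup B=\Ss^1$. The upshot is that $f_n$ has the convergence property up to a subsequence; since $(\p_n)$ was arbitrary, $\rho_1(G_\omega)\subset\Homeo(\Ss^1)$ is a convergence group, and Theorem~\ref{convergence_groups} then yields that it is topologically Fuchsian. (The same holds for $\rho_2(G_\omega)$, exchanging the roles of $\rho_1$ and $\rho_2$ via $h^{-1}\in\Homeo(\Ss^1)$.)

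I do not expect a genuine obstacle here; the two things to verify are that the collapsing step of Theorem~\ref{semi_conjugacy} is vacuous for $h$ a homeomorphism — so that "convergence property for $\hat\rho_1$" literally means "convergence property for $\rho_1$" — and that the estimates in that proof only used that $M_h$ is an open subset of $\Ss^1\times\Ss^1$ carrying a continuous positive density with $\int_{\Ss^1}\rho_1(\p)'(x)\,dx=1$. Both are immediate, which is precisely why the theorem is presented as a corollary of the proof rather than given an independent argument.
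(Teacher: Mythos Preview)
Your proposal is correct and is exactly the paper's approach: the paper's own proof consists of the single observation that when $h\in\Homeo(\Ss^1)$ one has $U_1=U_2=\emptyset$ in the proof of Theorem~\ref{semi_conjugacy}, hence $\hat\rho_1=\rho_1$ and the convergence property established there applies directly to $\rho_1(G_\omega)$. Your additional recap of the Fatou/Ascoli argument is faithful to that proof and adds nothing new.
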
 
\begin{proof} If $h\in \Homeo(\Ss^1)$, then $U_1=\emptyset$ and $U_2=\emptyset$ in the proof of Theorem \ref{semi_conjugacy}. This implies that $\hat \rho_1 = \rho_1$, so $\rho_1(G_\omega)=\hat \rho_1(G_\omega)$ is topologically Fuchsian.
\end{proof}

\section{Elements of $G_\omega$} \label{sec:elements}

All elements of $\PSL(2,\R)$ can appear (because of the form $\omega_0=\frac{4dx\wedge dy}{(x-y)^2}$ on $M_{Id}=\Ss^1\times \Ss^1\setminus \Delta$ for which $\rho_1(G_{\omega_0})=\PSL(2,\R)$), so we are now going to focus on  elements of $\rho_1(G_\omega)$ that are not  conjugate in $\Homeo(\Ss^1)$ to elements in $\PSL(2,\R)$. We will construct three types of examples: first introducing a parabolic fixed point in a hyperbolic element of $\PSL(2,\R)$, then opening the fixed point of a parabolic element of $\PSL(2,\R)$, and finally considering the lift of a parabolic element of $\PSL(2,\R)$ to the two sheeted cover $\PSL_2(2,\R)$.

\subsection{Hyperbolic elements} \label{subsec:hyperbolic} Let $f,g\in \Diff(\Ss^1)$ be such that:
\begin{itemize} \item $f$ has three fixed points $a_1<b_1<c_1<a_1$
\item $g$ has two fixed points $a_2,b_2$
\item $f'(a_1)g'(b_2)=1$ and $f'(b_1)g'(a_2)=1$
\item $f'(a_1)<1$ and $f'(b_1)>1$
\end{itemize}

\begin{figure}[h] \label{fig:non_convergence_dynamics}
\begin{tikzpicture}[line cap=round,line join=round,>=triangle 45,x=1.0cm,y=1.0cm,scale=0.55]
\clip(-8,-2.5) rectangle (24.48,6);
\draw(1,2) circle (3cm);
\draw(8,2) circle (3cm);
\draw (-0.41,-0.43)-- (-0.33,-0.69);
\draw (-0.59,-0.73)-- (-0.33,-0.69);
\draw (-0.9,4.32)-- (-0.82,4.64);
\draw (-0.65,4.31)-- (-0.9,4.32);
\draw (3.73,2.7)-- (3.96,2.51);
\draw (4.07,2.82)-- (3.96,2.51);
\draw (5.13,3.35)-- (5.42,3.52);
\draw (5.47,3.27)-- (5.42,3.52);
\draw (10.73,0.32)-- (10.7,0.7);
\draw (10.38,0.54)-- (10.7,0.7);
\draw (2.3,5.5) node[anchor=north west] {$b_1$};
\draw (2.9,-0.15) node[anchor=north west] {$a_1$};
\draw (-2.9,2.1) node[anchor=north west] {$c_1$};
\draw (6.8,-0.9) node[anchor=north west] {$b_2$};
\draw (8.7,5.8) node[anchor=north west] {$a_2$};
\draw (0.64,-1.4) node[anchor=north west] {$f$};
\draw (7.97,-1.37) node[anchor=north west] {$g$};
\begin{scriptsize}
\draw [color=black] (2.88,-0.34)-- ++(-1.5pt,-1.5pt) -- ++(3.0pt,3.0pt) ++(-3.0pt,0) -- ++(3.0pt,-3.0pt);
\draw [color=black] (-1.99,1.76)-- ++(-1.5pt,0 pt) -- ++(3.0pt,0 pt) ++(-1.5pt,-1.5pt) -- ++(0 pt,3.0pt);
\draw [color=black] (2.53,4.58)-- ++(-1.5pt,-1.5pt) -- ++(3.0pt,3.0pt) ++(-3.0pt,0) -- ++(3.0pt,-3.0pt);
\draw [color=black] (7.15,-0.88)-- ++(-1.5pt,0 pt) -- ++(3.0pt,0 pt) ++(-1.5pt,-1.5pt) -- ++(0 pt,3.0pt);
\draw [color=black] (8.79,4.89)-- ++(-1.5pt,0 pt) -- ++(3.0pt,0 pt) ++(-1.5pt,-1.5pt) -- ++(0 pt,3.0pt);
\end{scriptsize}
\end{tikzpicture}
\caption{Dynamics of $f$ and $g$}
\end{figure}
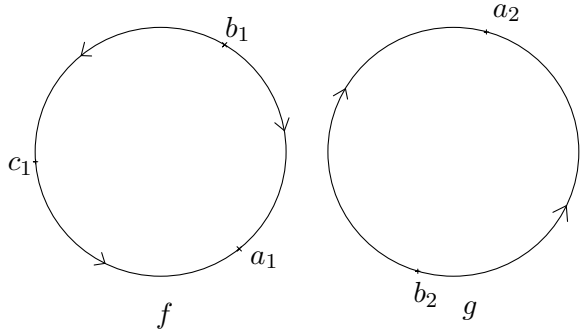

Let $h:\Ss^1\to \Ss^1$ be defined by:
\begin{itemize} \item $h(x)=b_2$ for $x\in \intfo{b_1}{c_1}$
\item $h(x)=a_2$ for $x\in \intfo{c_1}{a_1}$
\item $h : \intff{a_1}{b_1}\to \intff{a_2}{b_2}$ is an orientation preserving homeomorphism (or a non decreasing map such that $h(a_1)=a_2$ and $h(b_1)=b_2$) such that $g\circ h=h\circ f$
\end{itemize}

\begin{prop} \label{elementary_non_convergence} The map $(x,y)\mapsto \p(x,y)=(f(x),g(y))$ preserves a continuous volume form on $M_h$. \end{prop}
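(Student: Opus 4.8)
The plan is to solve the invariance identity $\omega(f(x),g(y))\,f'(x)g'(y)=\omega(x,y)$ on $M_h$ by gluing together, with a $\p$-invariant partition of unity, volume forms adapted to the dynamics of $\p:=(f,g)$. One first lists the fixed points of $\p$ in $M_h$: since $f$ has fixed set $\{a_1,b_1,c_1\}$ and $g$ has fixed set $\{a_2,b_2\}$, the candidates are the six points of $\{a_1,b_1,c_1\}\times\{a_2,b_2\}$, and among these only $p:=(a_1,b_2)$ and $q:=(b_1,a_2)$ lie in $M_h$, the other four lying on $G(h)$. At $p$ one has $f'(a_1)g'(b_2)=1$ with $f'(a_1)\neq1$, and at $q$ one has $f'(b_1)g'(a_2)=1$ with $f'(b_1)\neq1$; thus $p$ and $q$ are hyperbolic fixed points of $\p$ at which the Jacobian $f'g'$ equals $1$ (as it must, by Lemma \ref{continuous_rigidity}).

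Near $p$ one writes an explicit invariant form. Linearising the contraction $f$ at $a_1$ gives an orientation preserving $C^1$ diffeomorphism $\xi$ of a neighbourhood of $a_1$ with $\xi\circ f=f'(a_1)\,\xi$ (Koenigs), and linearising $g$ at $b_2$ gives $\zeta$ with $\zeta\circ g=g'(b_2)\,\zeta$. Then $\omega_p:=\xi'(x)\zeta'(y)\,dx\wedge dy$ is a continuous positive volume form on a full neighbourhood $V_p$ of $p$, and since $\xi'(f(x))f'(x)=f'(a_1)\xi'(x)$ and $\zeta'(g(y))g'(y)=g'(b_2)\zeta'(y)$ one gets $\omega_p(\p(x,y))f'(x)g'(y)=f'(a_1)g'(b_2)\,\omega_p(x,y)=\omega_p(x,y)$; an analogous construction at $q$ gives a continuous positive invariant $\omega_q$ on a neighbourhood $V_q$ of $q$. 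As $\omega_p$ is already $\p$-invariant on $V_p$, it extends unambiguously, via the invariance identity, to the $\p$-invariant open set $U_p:=\bigcup_{n\in\Z}\p^n(V_p)$, and likewise $\omega_q$ to $U_q$; moreover $V_p,V_q$ may be chosen small enough that $U_p\cap U_q=\emptyset$, since no orbit of $\p$ comes near both $p$ and $q$ (the separatrices $W^u(p),W^s(q),W^u(q),W^s(p)$ all accumulate only on $G(h)$, and $W^u(p)\cap W^s(q)=W^u(q)\cap W^s(p)=\emptyset$).

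On any $\p$-invariant open set $X\subset M_h\setminus\{p,q\}$ the $\Z$-action generated by $\p$ is free and properly discontinuous — every orbit leaves every compact subset, accumulating on $G(h)\cup\{p,q\}$ — so a $\p$-invariant continuous volume form on $X$ is exactly a nowhere-vanishing continuous section of the real line bundle $(\Lambda^2 T^*X)/\Z$ over the surface $X/\Z$. Since $\p$ acts on $\Lambda^2 T^*X$ by multiplication by the positive function $f'g'$, this bundle is orientable, hence trivial, hence admits such a section; applying this to $X=M_h\setminus(\overline{U_p'}\cup\overline{U_q'})$ for invariant neighbourhoods $U_p'\Subset U_p$, $U_q'\Subset U_q$ produces a continuous positive invariant form $\omega_1$ there. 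Finally, fixing a $\p$-invariant continuous partition of unity $\chi_p+\chi_q+\chi_1\equiv1$ subordinate to the invariant cover $\{U_p,\,U_q,\,M_h\setminus(\overline{U_p'}\cup\overline{U_q'})\}$, with $\chi_p\equiv1$ near $p$ and $\chi_q\equiv1$ near $q$, the form $\omega:=\chi_p\omega_p+\chi_q\omega_q+\chi_1\omega_1$ is continuous, $\p$-invariant (each summand is, the weights being $\p$-invariant) and everywhere positive (at each point one $\chi$ is positive and carries a positive multiple of $dx\wedge dy$). This $\omega$ proves the proposition.

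The step I expect to be the main obstacle is the construction of the $\p$-invariant partition of unity: the cover is $\p$-invariant, but $M_h/\p$ fails to be Hausdorff precisely at the images of $p$ and $q$, so the cutoffs must be built directly — for instance $\chi_p:=\sup_{n\in\Z}\beta_p\circ\p^n$ for an ordinary bump function $\beta_p$ supported in $V_p$, whose continuity one must check from local finiteness of the translates of $V_p$ away from $p,q$ and local constancy near $p$ — and one must also verify carefully that $U_p$ and $U_q$ are disjoint and that the three sets genuinely cover $M_h$. A secondary point is that the argument uses a little more than $C^1$ regularity of $f,g$ (through Koenigs' linearisation), which is in fact necessary: for a merely $C^1$ hyperbolic contraction there need be no continuous invariant volume form near the fixed point.
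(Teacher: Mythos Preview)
Your proof has a real gap in the proper–discontinuity step. You assert that on \emph{any} $\p$-invariant open $X\subset M_h\setminus\{p,q\}$ the $\Z$-action is properly discontinuous, because ``every orbit leaves every compact subset''. That implication is false: take $X=M_h\setminus\{p,q\}$ and let $K$ be a small annulus about $p$. In your Koenigs coordinates $\p$ acts as $(u,v)\mapsto(\lambda u,\lambda^{-1}v)$ with $\lambda=f'(a_1)$, and for every $n>0$ the set $\p^n(K)\cap K$ is nonempty (it contains points with $|u|\approx\sqrt\epsilon$ and $|v|\approx\lambda^{n}\sqrt\epsilon$). Individual orbits escaping compacta is strictly weaker than proper discontinuity near a hyperbolic saddle; this is exactly the phenomenon that makes $M_h/\p$ non-Hausdorff at the images of $p$ and $q$, as you yourself note at the end. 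For your \emph{specific} $X=M_h\setminus(\overline{U_p'}\cup\overline{U_q'})$ the conclusion does hold, but the correct reason is different: since $a_1$ is attracting and $b_1$ repelling, $c_1$ is forced to be one-sided parabolic, so the basin of $a_1$ under $f$ is $\intoo{c_1}{b_1}$ and that of $b_1$ under $f^{-1}$ is $\intoo{a_1}{c_1}$; hence $W^s(p)$ and $W^u(q)$ exhaust $\{y=b_2\}\cap M_h$ and $\{y=a_2\}\cap M_h$ respectively, giving $X\subset\Ss^1\times(\Ss^1\setminus\{a_2,b_2\})$, where the second coordinate alone (on which $g$ has no fixed point) already forces proper discontinuity. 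This argument, not orbit-escape, is what you need.

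The paper avoids both this issue and the invariant partition of unity by making the decomposition explicit. Instead of saturating small boxes, it draws four $\p$-invariant curves (graphs of decreasing homeomorphisms $\alpha_1,\alpha_2,\beta,\gamma$ conjugating $f$ and $g$) that cut $M_h$ into four open regions $U,V,W,X$. On $U$ and $V$ the \emph{global} Koenigs charts at $q=(b_1,a_2)$ and $p=(a_1,b_2)$ pull back $du\wedge dv$ directly. On $W$ and $X$ an explicit strip is shown to be a fundamental domain, so the quotient is visibly a cylinder, and one simply extends the already-defined boundary values continuously across it. No partition of unity and no abstract line-bundle argument enter. Your strategy is repairable along the lines above, but the paper's hands-on decomposition is shorter and sidesteps precisely the two subtleties you flag.
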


\begin{proof}

We start by dividing $M_h$ into several subsets, as shown in Figure \ref{fig:elementary_non_convergence}. Let $\alpha_1, \alpha_2 : \intff{a_1}{b_1} \to \intff{b_2}{a_2}$,  $\beta : \intff{b_1}{c_1} \to \intff{a_2}{b_2}$ and  $\gamma : \intff{c_1}{a_1} \to \intff{a_2}{b_2}$  be decreasing homeomorphisms  whose graphs are invariant under $\p$ (i.e. that conjugate $f$ and $g$). We choose $\alpha_1$ and $\alpha_2$ so that $b_2<\alpha_1(x)<\alpha_2(x)<a_2<b_2$ for all $x\in \intoo{a_1}{b_1}$.\\
\begin{figure}[h] 
\definecolor{aqaqaq}{rgb}{0.63,0.63,0.63}
\definecolor{wqwqwq}{rgb}{0.38,0.38,0.38}
\begin{tikzpicture}[line cap=round,line join=round,>=triangle 45,x=1.0cm,y=1.0cm]
\clip(-4.3,-3.5) rectangle (16.9,5);
\draw [color=wqwqwq] (-2,5)-- (6,5);
\draw [color=wqwqwq] (6,5)-- (6,-3);
\draw [color=wqwqwq] (6,-3)-- (-2,-3);
\draw [color=wqwqwq] (-2,-3)-- (-2,5);
\draw [line width=1.2pt] (3,5)-- (3,0);
\draw [line width=1.2pt] (1,0)-- (3,0);
\draw [line width=1.2pt] (3,5)-- (6,5);
\draw [color=aqaqaq] (1,5)-- (1,-3);
\draw [color=aqaqaq] (3,-3)-- (3,0);
\draw [color=aqaqaq] (-2,0)-- (1,0);
\draw [color=aqaqaq] (3,0)-- (6,0);
\draw [shift={(-3.97,1.97)},line width=1.2pt]  plot[domain=5.09:5.91,variable=\t]({1*5.34*cos(\t r)+0*5.34*sin(\t r)},{0*5.34*cos(\t r)+1*5.34*sin(\t r)});
\draw [shift={(3.74,5.04)},dash pattern=on 3pt off 3pt]  plot[domain=3.15:4.21,variable=\t]({1*5.74*cos(\t r)+0*5.74*sin(\t r)},{0*5.74*cos(\t r)+1*5.74*sin(\t r)});
\draw [shift={(-13.03,-5.02)},dash pattern=on 3pt off 3pt]  plot[domain=0.34:0.74,variable=\t]({1*14.89*cos(\t r)+0*14.89*sin(\t r)},{0*14.89*cos(\t r)+1*14.89*sin(\t r)});
\draw [shift={(-6.96,-7.47)},dash pattern=on 3pt off 3pt]  plot[domain=0.42:0.75,variable=\t]({1*10.92*cos(\t r)+0*10.92*sin(\t r)},{0*10.92*cos(\t r)+1*10.92*sin(\t r)});
\draw [shift={(-1.31,-7.31)},dash pattern=on 3pt off 3pt]  plot[domain=0.53:1.04,variable=\t]({1*8.49*cos(\t r)+0*8.49*sin(\t r)},{0*8.49*cos(\t r)+1*8.49*sin(\t r)});
\draw (-2.2,-3) node[anchor=north west] {$a_1$};
\draw (0.75,-3) node[anchor=north west] {$b_1$};
\draw (2.8,-3.1) node[anchor=north west] {$c_1$};
\draw (-2.6,-2.7) node[anchor=north west] {$a_2$};
\draw (-2.6,0.3) node[anchor=north west] {$b_2$};
\draw (-1.18,1.4) node[anchor=north west] {$\alpha_1$};
\draw (-0.48,3.78) node[anchor=north west] {$\alpha_2$};
\draw (-0.8,2.3) node[anchor=north west] {$W$};
\draw (2.08,-0.66) node[anchor=north west] {$\beta$};
\draw (4.78,-0.64) node[anchor=north west] {$\gamma$};
\draw (3.38,3.36) node[anchor=north west] {$G(h)$};
\draw (5.04,0.98) node[anchor=north west] {$V$};
\draw (0.36,4.96) node[anchor=north west] {$U$};
\draw (3.52,-1.24) node[anchor=north west] {$X$};
\end{tikzpicture}
\caption{Constructing $\omega$ on $M_h$} \label{fig:elementary_non_convergence}
\end{figure}
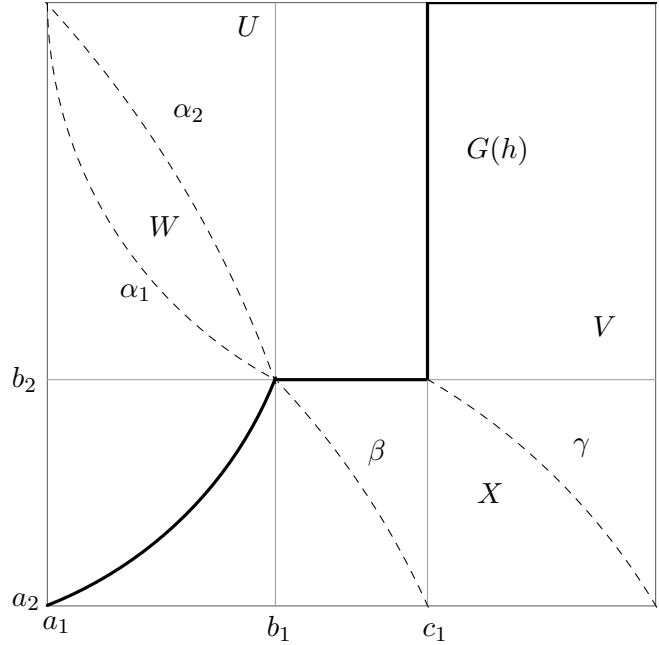
\indent Let $U$, $V$, $W$, $X$ be the open subsets of $M_h$ delimited by $G(h)$ and the graphs of $\alpha_1$, $\alpha_2$, $\beta$ and $\gamma$:
\begin{eqnarray*} U&=& \{(x,y)\in \intof{a_1}{b_1}\times \Ss^1 \vert y\in \intoo{\alpha_2(x)}{h(x)}\} \\& & ~~~~~~~\cup ~\{(x,y)\in \intfo{b_1}{c_1} \times \Ss^1 \vert y\in \intoo{b_2}{\beta(x)} \}\\
V &=& \{(x,y)\in \intof{c_1}{a_1}\times \Ss^1 \vert y\in \intoo{\gamma(x)}{a_2} \} \\& & ~~~~~~~ \cup~ \{ (x,y)\in \intfo{a_1}{b_1}\times \Ss^1 \vert  y\in \intoo{h(x)}{\alpha_1(x)}\}\\
W &=& \{ (x,y)\in \intoo{a_1}{b_1} \times \Ss^1 \vert y\in \intoo{\alpha_1(x)}{\alpha_2(x)}\}\\
X &=& \{(x,y) \in \intof{b_1}{c_1} \times \Ss^1 \vert y\in \intoo{\beta(x)}{b_2}\}\\& & ~~~~~~~ \cup~ \{(x,y)\in \intfo{c_1}{a_1}\times \Ss^1 \vert y\in \intoo{a_2}{\gamma(x)}\} 
\end{eqnarray*}

\indent Consider the linearisation maps $\tau_{a_1}^f : \intoo{c_1}{b_1}\to \R$, $\tau_{b_1}^f : \intoo{a_1}{c_1} \to \R$, $\tau_{a_2}^g : \Ss^1\setminus \{b_2\} \to \R$ and $\tau_{b_2}^g:\Ss^1\setminus \{a_2\}\to \R$. They are smooth maps such that: 
\begin{eqnarray*} \tau_{a_1}^f\circ f\circ (\tau_{a_1}^f)^{-1}(x) &=&  f'(a_1) x \\ \tau_{b_1}^f\circ f\circ (\tau_{b_1}^f)^{-1}(x)&=& f'(b_1) x \\ \tau_{a_2}^g\circ g\circ (\tau_{a_2}^g)^{-1}(x)&=& g'(a_2) x \\ \tau_{b_2}^g\circ g\circ (\tau_{b_2}^g)^{-1}(x)&=& g'(b_2) x \end{eqnarray*}
\indent The map $(x,y)\mapsto (\tau_{b_1}^f(x),\tau_{a_2}^g(y))$ sends $U$ to an open set of $\R^2$ and conjugates $\p$ with $(x,y)\mapsto (f'(b_1)x,g'(a_2)y)$. Since $f'(b_1)g'(a_2)=1$, this map preserves $dx\wedge dy$ on $\R^2$, and $\p$ preserves the pull-back $\omega_U$ on $U$.\\
\indent The map $(x,y)\mapsto (\tau_{a_1}^f(x),\tau_{b_2}^g(y))$ sends $V$ to an open set of $\R^2$ and conjugates $\p$ with $(x,y)\mapsto (f'(a_1)x,g'(b_2)y)$. Since $f'(a_1)g'(b_2)=1$, this map preserves $dx\wedge dy$ on $\R^2$, and $\p$ preserves the pull-back $\omega_V$ on $V$.\\
\indent To extend $\omega$ to $W$, we notice that the action of $\p$ on $\intoo{a_1}{b_1}\times \intoo{b_2}{a_2}$ is conjugate to a translation in $\R^2$, so the quotient is diffeomorphic to a cylinder. The images of $U$ and $V$ on the cylinder are open sets $\hat U$, $\hat V$ each bounded by a curve on which  volume forms $\hat \omega_U$, $\hat \omega_V$ are defined. Simply consider a continuous volume form $\hat \omega$ on the cylinder that is equal to $\hat \omega_U$  on $\hat U$, and equal to $\hat \omega_V$ on $\hat V$ (this is possible because $\hat \omega_U$ and $\hat \omega_V$ can be defined on open sets larger than $U$ and $V$). This lifts to a volume form $\omega$ on $\intoo{a_1}{b_1}\times \intoo{b_2}{a_2}$ that is invariant under $\p$ and that is equal to $\omega_U$ in a neighbourhood of the axes $\{a_1\} \times \intoo{b_2}{a_2}\cup \intoo{a_1}{b_1} \times \{b_2\}$ and to $\omega_V$ on a neighbourhood of $\{b_1\}\times \intoo{b_2}{a_2} \cup \intoo{a_1}{b_1}\times\{a_2\}$.\\
\indent We now only have to extend $\omega$ to $X$. If $x\in \intoo{a_2}{b_2}$, then $\intoo{b_1}{a_1}\times \intfo{x}{g(x)}$ is a fundamental domain for the action of $\p$ on $\intoo{b_1}{a_1}\times \intoo{a_2}{b_2}$, which shows that the quotient is diffeomorphic to the cylinder, and we can extend $\omega$ to $X$ in the same way that we did for $W$.
\end{proof}

The same could also be done for $f$ with four fixed points $a_1<b_1<c_1<d_1<a_1$ such that $b_1$ and $d_1$ are parabolic, $a_1$ is attractive and $c_1$ repulsive. We then choose $g$ with two hyperbolic fixed points $a_2,b_2$ such that $f'(a_1)g'(b_2)=1$ and $f'(c_1)g'(a_1)=1$. We set $h\equiv a_2$ on $\intfo{d_1}{b_1}$ and $h\equiv b_2$ on $\intfo{b_1}{d_1}$. The same kind of division of $M_h$ into four invariant open sets $U,V,W,X$ gives a $(f,g)$-invariant volume form on $M_h$ (see Figure \ref{fig:four_fixed_points}).

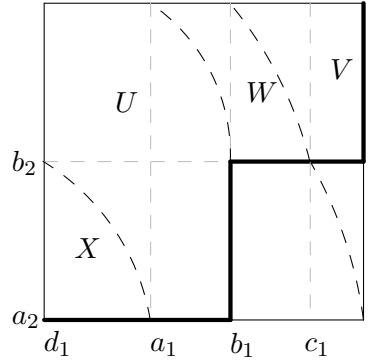
\begin{figure}[h]
\definecolor{cqcqcq}{rgb}{0.75,0.75,0.75}
\begin{tikzpicture}[line cap=round,line join=round,>=triangle 45,x=1.0cm,y=1.0cm,scale=0.7]
\clip(-6.5,-2) rectangle (16.9,6.3);
\draw (-1,5)-- (5,5);
\draw (5,5)-- (5,-1);
\draw (5,-1)-- (-1,-1);
\draw (-1,-1)-- (-1,5);
\draw [line width=1.6pt] (-1,-1)-- (2.5,-1);
\draw [line width=1.6pt] (2.5,-1)-- (2.5,2);
\draw [line width=1.6pt] (2.5,2)-- (5,2);
\draw [line width=1.6pt] (5,2)-- (5,5);
\draw [dash pattern=on 5pt off 5pt,color=cqcqcq] (4,5)-- (4,-1);
\draw [dash pattern=on 5pt off 5pt,color=cqcqcq] (2.5,5)-- (2.5,2);
\draw [dash pattern=on 5pt off 5pt,color=cqcqcq] (1,5)-- (1,-1);
\draw [dash pattern=on 5pt off 5pt,color=cqcqcq] (-1,2)-- (2.5,2);
\draw [shift={(-3.3,-1.7)},dash pattern=on 5pt off 5pt]  plot[domain=0.16:1.02,variable=\t]({1*4.35*cos(\t r)+0*4.35*sin(\t r)},{0*4.35*cos(\t r)+1*4.35*sin(\t r)});
\draw [shift={(-0.99,2.13)},dash pattern=on 5pt off 5pt]  plot[domain=-0.04:0.96,variable=\t]({1*3.49*cos(\t r)+0*3.49*sin(\t r)},{0*3.49*cos(\t r)+1*3.49*sin(\t r)});
\draw [shift={(-3.35,0.2)},dash pattern=on 5pt off 5pt]  plot[domain=0.24:0.69,variable=\t]({1*7.56*cos(\t r)+0*7.56*sin(\t r)},{0*7.56*cos(\t r)+1*7.56*sin(\t r)});
\draw [shift={(-3.24,-2.08)},dash pattern=on 5pt off 5pt]  plot[domain=0.13:0.51,variable=\t]({1*8.31*cos(\t r)+0*8.31*sin(\t r)},{0*8.31*cos(\t r)+1*8.31*sin(\t r)});
\draw (-1.2,-1.02) node[anchor=north west] {$d_1$};
\draw (0.8,-1.16) node[anchor=north west] {$a_1$};
\draw (2.3,-1.05) node[anchor=north west] {$b_1$};
\draw (3.7,-1.16) node[anchor=north west] {$c_1$};
\draw (-1.8,-0.64) node[anchor=north west] {$a_2$};
\draw (-1.8,2.4) node[anchor=north west] {$b_2$};
\draw (0.14,3.5) node[anchor=north west] {$U$};
\draw (4.18,4.14) node[anchor=north west] {$V$};
\draw (2.6,3.7) node[anchor=north west] {$W$};
\draw (-0.62,0.74) node[anchor=north west] {$X$};
\end{tikzpicture}
\caption{Example with four fixed points} \label{fig:four_fixed_points}
\end{figure}

\subsection{Parabolic elements} \label{subsec:parabolic} Let $\gamma \in \PSL(2,\R)$  be parabolic, and let $x_0\in \Ss^1$ be its fixed point. We will denote by $\omega_0$ the volume form on $\Ss^1\times \Ss^1\setminus \Delta$ preserved by $\PSL(2,\R)$.\\
\indent Let $I\subset \Ss^1$ be a compact interval and define a continuous function $h:\Ss^1\to \Ss^1$ such that $h(I)=\{x_0\}$ et $h:\Ss^1\setminus I\to \Ss^1\setminus \{x_0\}$ is an affine diffeomorphism. It is non decreasing of degree one.\\
\indent We can define  $f\in \Diff^{1,1}(\Ss^1)$ such that the restriction to $I$ is an orientation preserving diffeomorphism with $f'=1$ at the endpoints and $h^{-1}\circ \gamma \circ h$ on the complement of $I$. We have $h\circ f=\gamma \circ h$, so the map $(f,\gamma)$ acts on $M_h$.
\begin{prop} The map $(x,y)\mapsto (f(x),\gamma(y))$ preserves a continuous volume form on $M_h$. \end{prop}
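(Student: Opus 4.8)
The plan is to build $\omega$ piece by piece along the $\varphi$-invariant decomposition $M_h=M_h^{out}\sqcup L_p\sqcup M_h^{in}\sqcup L_q$, where $p,q$ are the endpoints of $I$, $M_h^{out}=\{(x,y):x\notin I,\ y\ne h(x)\}$, $M_h^{in}=\intoo{p}{q}\times(\Ss^1\setminus\{x_0\})$ (the interior of $I$ times $\Ss^1\setminus\{x_0\}$), and $L_p=\{p\}\times(\Ss^1\setminus\{x_0\})$, $L_q=\{q\}\times(\Ss^1\setminus\{x_0\})$ (recall $h(p)=h(q)=x_0$). On $M_h^{out}$ I would take $\omega=(h,\mathrm{id})^*\omega_0$: since $(h,\mathrm{id})\circ\varphi=(\gamma,\gamma)\circ(h,\mathrm{id})$ and $\omega_0$ is $(\gamma,\gamma)$-invariant, this is a $\varphi$-invariant continuous volume form, equal to $\tfrac{4h'(x)}{(h(x)-y)^2}\,dx\wedge dy$; because $h$ is affine, its derivative has finite positive one-sided limits at $p$ and $q$ (along $\Ss^1\setminus I$), so $\omega$ extends continuously across $L_p$ and $L_q$, with limits $\nu_p=\tfrac{4h'(p^-)}{(x_0-y)^2}\,dx\wedge dy$ and $\nu_q=\tfrac{4h'(q^+)}{(x_0-y)^2}\,dx\wedge dy$. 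Evaluating $(\gamma,\gamma)$-invariance of $\omega_0$ at the parabolic fixed point, together with $\gamma'(x_0)=1$, gives $\gamma'(y)(x_0-\gamma(y))^{-2}=(x_0-y)^{-2}$, so for each fixed $x$ the $1$-form $(x_0-y)^{-2}\,dy$ on $\Ss^1\setminus\{x_0\}$ is $\gamma$-invariant; hence so are the restrictions of $\nu_p$ and $\nu_q$.

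It then remains to produce a $\varphi$-invariant continuous volume form on $M_h^{in}$ extending continuously to $L_p$ and $L_q$ with these limits. Here I would exploit the freedom in the choice of $f|_I$: take $f$ equal to the identity on two collars $\intff{p}{p_1}$, $\intff{q_1}{q}$ (with $p<p_1<q_1<q$) and with $f(x)>x$ on $\intoo{p_1}{q_1}$, checking — as the statement already anticipates — that the resulting $f$ is $C^{1,1}$ on $\Ss^1$; this is the point where affinity of $h$ is needed, as it keeps $f''$ bounded at the junctions. On $\intff{p}{p_1}\times(\Ss^1\setminus\{x_0\})$, where $\varphi=(\mathrm{id},\gamma)$, a continuous $2$-form $\mu(x,y)\,dx\wedge dy$ is $\varphi$-invariant exactly when $\mu(x,\cdot)\,dy$ is a $\gamma$-invariant $1$-form for every $x$, and such forms constitute a convex cone; so a partition-of-unity in $x$ lets me interpolate from $\nu_p$ at $x=p$ to an arbitrary form of this type at $x=p_1$, and symmetrically near $q$.

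The heart of the matter is the middle slab $\intff{p_1}{q_1}\times(\Ss^1\setminus\{x_0\})$, a $\varphi$-invariant manifold with boundary. Since $\gamma$ is conjugate to an integer translation of $\R$, it acts freely and properly discontinuously on $\Ss^1\setminus\{x_0\}$ \emph{with a compact fundamental domain}; hence $\varphi$ acts freely and properly discontinuously on the slab with a compact fundamental domain, so the quotient $\bar Q$ is a \textbf{compact} manifold with boundary and the projection $\pi$ is a covering map, continuous up to the boundary. I would then pick \emph{any} continuous positive $2$-form $\bar\omega$ on $\bar Q$ and set $\omega=\pi^*\bar\omega$ on the slab: it is $\varphi$-invariant and continuous up to $\{p_1,q_1\}\times(\Ss^1\setminus\{x_0\})$. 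Because $f'(p_1)=f'(q_1)=1$, $\varphi$-invariance degenerates at the boundary, which forces the limits $\lim_{x\to p_1^+}\pi^*\bar\omega$ and $\lim_{x\to q_1^-}\pi^*\bar\omega$ to be of the form $m(y)\,dx\wedge dy$ with $m(y)\,dy$ a $\gamma$-invariant $1$-form — exactly the kind of datum the collar interpolation of the previous paragraph can absorb. Matching the five pieces then yields a $\varphi$-invariant continuous volume form on all of $M_h$.

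The step I expect to be the main obstacle is recognizing and using that the quotient $\bar Q$ of the closed slab is compact: this is what guarantees that an arbitrarily chosen volume form on $\bar Q$ pulls back to something continuous up to the boundary circles $L_{p_1}$, $L_{q_1}$. Without it one would have to prescribe delicate asymptotics of an invariant form on a non-compact cylinder, as in the treatment of the regions $W$ and $X$ in the proof of Proposition~\ref{elementary_non_convergence}. If the statement is meant for a prescribed $f$ rather than one of our choosing, the same scheme still applies after cutting $\intoo{p}{q}$ along $\mathrm{Fix}(f)$: the compact-quotient construction works on each interval where $f$ moves points, and on the complementary set $\varphi=(\mathrm{id},\gamma)$, handled by $\gamma$-invariant $1$-forms as above.
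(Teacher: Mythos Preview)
Your argument is correct, and on the exterior $M_h^{out}$ it coincides with the paper's: pull back $\omega_0$ by $(h,\mathrm{id})$ and invoke $(\gamma,\gamma)$-invariance of $\omega_0$. On the interior, however, you take a longer route than the paper. The paper observes directly that the quotient of the \emph{full} closed strip $\overline I\times(\Ss^1\setminus\{x_0\})$ by $(f,\gamma)$ is already a compact cylinder with boundary --- compactness follows solely from the fact that $\gamma$ acts cocompactly on $\Ss^1\setminus\{x_0\}$ (it is conjugate to a translation of $\R$), independently of what $f$ does on $I$ --- and then simply extends the boundary data (your $\nu_p$, $\nu_q$) to a continuous volume form on this compact cylinder, which lifts to the required invariant form on $\mathring I\times(\Ss^1\setminus\{x_0\})$.

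Your collar--slab--collar decomposition and the special choice of $f|_I$ (identity near the endpoints) are therefore unnecessary: the step you single out as the ``main obstacle'' --- compactness of $\bar Q$ --- is automatic for \emph{any} $f$, not only for your tailored one, and in particular your fallback of cutting along $\mathrm{Fix}(f)$ for a general $f$ is not needed either. What your construction buys is a very explicit matching via convex interpolation of $\gamma$-invariant $1$-forms; the paper trades this for a one-line extension argument on a compact manifold with boundary, which works uniformly for every admissible $f$.
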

\begin{proof} First, define $\omega$ on  $M_h\setminus (\mathring I\times \Ss^1)$ to be $\omega(x,y)=\omega_0(h(x),y)$. It is a continuous volume form (even Lipschitz), and let us show that  $(x,y)\mapsto (f(x),\gamma(y))$ preserves $\omega$. If $x\notin \mathring I$, then we get:

\begin{eqnarray*}  \omega(f(x),\gamma(y))f'(x)\gamma'(y) &=& \omega_0(h\circ f(x),\gamma(y))f'(x)\gamma'(y)\\ &=& \omega_0(\gamma(h(x)),\gamma(y)) \gamma'(h(x))\gamma'(y) \frac{f'(x)}{\gamma'(h(x))}\\ &=& \omega_0(h(x),y) \frac{f'(x)}{\gamma'(h(x))}\\ &=& \omega(x,y) \frac{h'(x)}{h'(f(x))} \\ &=& \omega(x,y)
\end{eqnarray*}
\indent To extend $\omega$ to $ M_h$, we simply notice that the quotient of $I\times \Ss^1 \setminus G(h)$ by $(f,\gamma)$ is diffeomorphic to a cylinder, on which we have defined a volume form on the boundary. We can extend it to a volume form on the cylinder, then lift it to a continuous volume form on $I\times \Ss^1\setminus G(h)$ that is $(f,\gamma)$-invariant and equal to $\omega$ on the boundary. This way, we have defined a continuous volume form on $M_h$ that is $(f,\gamma)$-invariant. \end{proof}

\begin{rem} In all of the other examples, the invariant volume forms can actually be constructed in a smooth way. However, in the case of the opening of the fixed point of a parabolic, it is not clear whether it is possible to find a smooth volume form.
\end{rem}

\subsection{Lifts of parabolics to $\PSL_2(2,\R)$} The two-sheeted covering of the circle is still a circle, which induces a two-sheeted covering group $\PSL_2(2,\R)$ of $\PSL(2,\R)$. Lifts of parabolic elements either have two fixed points, or two (not fixed) periodic points. Let $R\in \PSL_2(2,\R)$ be a generator of the centre ($R$ is of order two).\\
\indent If $\gamma\in \PSL_2(2,\R)$ is a lift of a parabolic element without fixed points, then it has two periodic points $x_0,R(x_0)$. Let $U=\{ (x,y) \vert y\in \intoo{x}{R(x)}\}$. The quotient by the map $(x,y)\mapsto (R(x),R(y))$ of $U$ endowed with the action of $\PSL(2,\R)$ is equivariant to the diagonal action on $\Ss^1\times \Ss^1\setminus \Delta$, therefore has a volume form preserved by $\PSL(2,\R)$, which can be lifted to $U$ as a volume form $\omega_0$ invariant under $\PSL_2(2,\R)$.\\
\indent Let $h:\Ss^1\to \Ss^1$ be defined by $h(x)=x_0$ if $x\in \intfo{x_0}{R(x_0)}$ and $h(x)=R(x_0)$ if $x\in \intfo{R(x_0)}{x_0}$. It is non decreasing of degree one, and it commutes with $\gamma$.

\begin{prop} The map $(x,y)\mapsto (\gamma(x),\gamma(y))$ preserves a continuous volume form on $M_h$. \end{prop}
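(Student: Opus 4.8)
The plan is to follow the template of the two previous propositions: exhibit $M_h$ as the union of an open set carrying a ready-made invariant form and finitely many ``cylinder'' pieces, then extend. Write $\Phi$ for the map $(x,y)\mapsto(\gamma(x),\gamma(y))$; it is well defined on $M_h$ because $h$ commutes with $\gamma$, and it commutes with the involution $\iota\colon(x,y)\mapsto(R(x),R(y))$, which also preserves $M_h$ since $R$ commutes with $h$. The one dynamical fact I would keep in mind throughout is that $\gamma$ \emph{exchanges} the two periodic points $x_0$ and $R(x_0)$, while $\gamma^2$ is a parabolic of $\PSL_2(2,\R)$ with fixed–point set exactly $\{x_0,R(x_0)\}$; consequently $\gamma^2$ is conjugate, on each component of $\Ss^1\setminus\{x_0,R(x_0)\}$, to a nontrivial translation of $\R$, hence acts there properly discontinuously and cocompactly.

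First I would pin down the topology of $M_h$. Since $h$ takes only the two values $x_0,R(x_0)$, the set $G(h)$ is an embedded ``staircase'' circle — two horizontal edges, two vertical edges, meeting at the four corners $(x_0,x_0)$, $(R(x_0),R(x_0))$, $(x_0,R(x_0))$, $(R(x_0),x_0)$, all lying on $G(h)$ — so $M_h$ is an open annulus; $U$ is an open $\Phi$-invariant sub-band of it ($\Phi$ preserves $U$ because, $R$ being central, $\gamma$ sends $\intoo{x}{R(x)}$ to $\intoo{\gamma(x)}{R(\gamma(x))}$), and on $U$ the form $\omega_0$ is $\Phi$-invariant, being $\PSL_2(2,\R)$-invariant. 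What remains of $M_h$ is the union of the four open arcs of $\partial U$ that lie in $M_h$ — which $\Phi$ permutes, pairing them up — together with the bands squeezed between those arcs and the edges of $G(h)$; on each of these pieces the two dynamical facts above show that the quotient by $\Phi$ is a finite disjoint union of cylinders (the pairing by $\Phi$ folds the four arcs into two circles, and $\gamma^2$ is cocompact along them).

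With this in hand, the construction mimics the previous proofs: put $\omega_0$ on a $\Phi$-invariant open sub-band $U_0$ with $\overline{U_0}\subset U$ (on which $\omega_0$ is finite and positive); over a $\Phi$-invariant collar of the four boundary arcs of $M_h$ — whose quotient by $\Phi$ is a union of cylinders — choose a continuous volume form equal to $\omega_0$ along the interface with $U_0$; and over the remaining bands (again cylinders modulo $\Phi$), extend the form inward from the collar. Gluing these, one obtains a continuous — in fact smooth — $\Phi$-invariant volume form on $M_h$. The place where this case genuinely departs from the parabolic–opening construction, and the main point to be careful about, is that the degeneracy locus $\partial U$ of $\omega_0$ is \emph{not} contained in $G(h)$: four of its arcs lie in $M_h$, so one cannot extend $\omega_0$ across $\partial U$ as before but must instead cap it off $\Phi$-equivariantly near those arcs. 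That this is possible rests exactly on $\gamma$ swapping $x_0$ and $R(x_0)$ and on $\gamma^2$ being parabolic, which together make $\Phi$ act with cylinder quotient on a neighbourhood of each arc.
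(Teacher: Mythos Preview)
Your argument is correct and follows the same strategy as the paper: start with the $\PSL_2(2,\R)$--invariant form $\omega_0$ on the band $U$, then extend over the complementary pieces of $M_h$ using that the $\Phi$--quotient of each is a cylinder (the paper does this in two explicit steps, via sets it calls $V\cup R(V)$ and $W\cup R(W)$, while you describe the same pieces as neighbourhoods of the four arcs of $\partial U\cap M_h$). You are in fact slightly more careful than the paper on one point: you note that $\omega_0$ blows up along both components of $\partial U$, so one must first cut back to a $\Phi$--invariant sub-band $U_0\Subset U$ before extending, a step the paper's proof leaves implicit.
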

\begin{proof} In order to extend $\omega_0$ to $M_h$, we start by considering the open set $V=\{(x,y)\vert x\in \intoo{x_0}{R(x_0)} , y\in \intoo{x}{x_0}\}$ and its image $R(V)$ under the map $(x,y)\mapsto (R(x),R(y))$ (see Figure \ref{fig:lift}).
\begin{figure}[h]
\definecolor{cqcqcq}{rgb}{0.75,0.75,0.75}
\begin{tikzpicture}[line cap=round,line join=round,>=triangle 45,x=1.0cm,y=1.0cm,scale=0.8]
\clip(-2.1,1) rectangle (16.9,6.3);
\fill[fill=black,pattern=vertical lines] (-2,1) -- (0,1) -- (2,3) -- (2,5) -- cycle;
\fill[fill=black,pattern=vertical lines] (-2,3) -- (0,5) -- (-2,5) -- cycle;
\fill[fill=black,pattern=vertical lines] (4,1) -- (6,1) -- (6,3) -- cycle;
\fill[fill=black,pattern=vertical lines] (6,3) -- (8,3) -- (8,5) -- cycle;
\draw (-2,5)-- (-2,1);
\draw (2,1)-- (-2,1);
\draw (2,5)-- (2,1);
\draw (-2,5)-- (2,5);
\draw (4,5)-- (8,5);
\draw (4,5)-- (4,1);
\draw (4,1)-- (8,1);
\draw (8,5)-- (8,1);
\draw (10,5)-- (10,1);
\draw (10,1)-- (14,1);
\draw (14,5)-- (14,1);
\draw (10,5)-- (14,5);
\draw [line width=1.6pt] (-2,1)-- (2,5);
\draw (0,5)-- (-2,3);
\draw [line width=1.6pt] (2,3)-- (0,1);
\draw (4,1)-- (8,5);
\draw [dash pattern=on 5pt off 5pt] (8,3)-- (6,1);
\draw [dash pattern=on 5pt off 5pt] (6,5)-- (4,3);
\draw (-2,1)-- (0,1);
\draw (0,1)-- (2,3);
\draw (2,3)-- (2,5);
\draw [line width=1.6pt] (2,5)-- (-2,1);
\draw [line width=1.6pt] (-2,3)-- (0,5);
\draw (0,5)-- (-2,5);
\draw (-2,5)-- (-2,3);
\draw (6,3)-- (6,1);
\draw [line width=1.6pt] (6,3)-- (8,3);
\draw [line width=1.6pt] (4,5)-- (4,3);
\draw [line width=1.6pt] (4,5)-- (6,5);
\draw (4,1)-- (6,1);
\draw [line width=1.6pt] (6,1)-- (6,3);
\draw (6,3)-- (4,1);
\draw (6,3)-- (8,3);
\draw (8,3)-- (8,5);
\draw (8,5)-- (6,3);
\draw [line width=1.6pt] (-2,5)-- (0,5);
\draw [line width=1.6pt] (-2,5)-- (-2,3);
\draw [line width=1.6pt] (0,1)-- (-2,1);
\draw [line width=1.6pt] (2,5)-- (2,3);
\draw [line width=1.6pt] (4,1)-- (6,1);
\draw [line width=1.6pt] (4,1)-- (6,3);
\draw [line width=1.6pt] (6,3)-- (8,5);
\draw [line width=1.6pt] (8,5)-- (8,3);
\draw [line width=1.6pt] (10,5)-- (10,3);
\draw [line width=1.6pt] (10,5)-- (12,5);
\draw [line width=1.6pt] (12,3)-- (12,1);
\draw [line width=1.6pt] (12,3)-- (14,3);
\draw [line width=1.6pt] (10,1)-- (12,1);
\draw [line width=1.6pt] (14,5)-- (14,3);
\draw [dash pattern=on 5pt off 5pt] (10,1)-- (14,5);
\draw [dash pattern=on 5pt off 5pt] (10,3)-- (12,5);
\draw [dash pattern=on 5pt off 5pt] (14,3)-- (12,1);
\draw (-0.74,3.88) node[anchor=north west] {$U$};
\draw (4.38,4.62) node[anchor=north west] {$V$};
\draw (6.2,2.78) node[anchor=north west] {$R(V)$};
\draw [dash pattern=on 5pt off 5pt,color=cqcqcq] (6,5)-- (6,3);
\draw (10.9,1.98) node[anchor=north west] {$W$};
\draw (12.5,4.04) node[anchor=north west] {$R(W)$};
\end{tikzpicture}
\caption{Invariant volume form for a parabolic element of $\PSL_2(2,\R)$} \label{fig:lift}
\end{figure}
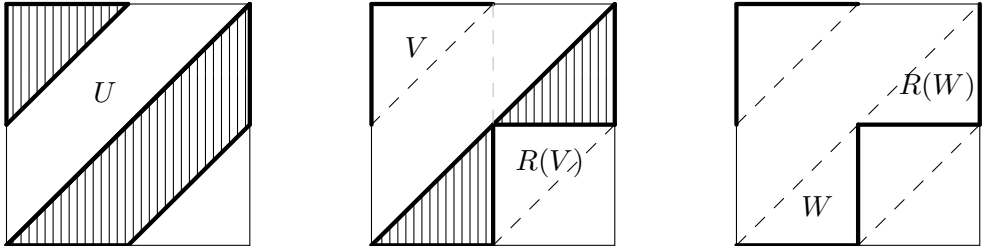

Since the action of $\gamma$ on $V\cup R(V)$ is proper (the quotient is a cylinder), we can extend $\omega_0$ from $U\cap (V\cup R(V))$ to a volume form $\omega$ on $U\cup V\cup R(V)$.\\
\indent Similarly, we set $W=\{(x,y)\vert x\in \intoo{x_0}{R(x_0)} , y\ne x_0\}$ and extend $\omega$ to $W\cup R(W)$, so that $\omega$ is now defined on $M_h$.
\end{proof}

\subsection{Classification of elements of $\rho_1(G_\omega)$ up to topological conjugacy}
We will now see that we have described all of the examples. 
\begin{prop} \label{classification_elements} Let $h:\Ss^1\to \Ss^1$ be non decreasing of degree one, and let $\omega$ be a continuous volume form on $M_h$. For any $\p\in G_\omega$, $\rho_1(\p)$ is topologically conjugate to one of the examples described above, i.e. it satisfies one of the following propositions:
\begin{itemize} \item $\rho_1(\p)$ is topologically conjugate to an element of $\PSL(2,\R)$.
\item $\rho_1(\p)$ is topologically conjugate to a parabolic element of $\PSL_2(2,\R)$.
\item $\rho_1(\p)$ has three fixed points $a<b<c<a$ such that $a,b$ are hyperbolic and $c$ is parabolic.
\item $\rho_1(\p)$ has four fixed points $a<b<c<d<a$ such that $a,c$ are hyperbolic and $b,d$ are parabolic. 
\end{itemize}
\end{prop}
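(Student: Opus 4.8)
The plan is to fix $\p\in G_\omega\setminus\{Id\}$, write $f=\rho_1(\p)$ and $g=\rho_2(\p)$ (which are semi conjugate through $h$, hence have the same rotation number), and run through the trichotomy for minimal invariant sets of $\langle f\rangle$. If $\p$ has finite order, then $f$ is a finite order orientation preserving circle homeomorphism, hence topologically conjugate to a rational rotation, which lies in $\PSL(2,\R)$: this is the first case. So assume from now on that $\p$ has infinite order.

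The crucial preliminary step, which I expect to be the main obstacle, is to rule out an exceptional minimal set for $\langle f\rangle$. Suppose $K$ were such a Cantor set. Then every component $J$ of $\Ss^1\setminus K$ is wandering, so $|f^n(J)|\to 0$ and one can choose $n_k\to+\infty$ and $p\in K$ with $f^{n_k}(J)\to\{p\}$. Picking $y$ with $(p,y)\in M_h$, the boxes $f^{n_k}(J)\times\intoo{y-\e}{y+\e}$ eventually lie in a fixed compact box of $M_h$, so their $\omega$-volume tends to $0$; but $\p^{-n_k}$ turns them into $J\times g^{-n_k}(\intoo{y-\e}{y+\e})$, and a volume comparison in the spirit of the proof of Theorem \ref{semi_conjugacy} --- using that $g$, having irrational rotation number, is semi conjugate to a rotation and hence cannot contract a fixed arc too much --- produces a uniform positive lower bound for their $\omega$-volume, contradicting $\p^{*}\omega=\omega$. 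Hence $\langle f\rangle$ has no exceptional minimal set, so either all its orbits are dense --- then $f$ is minimal, hence conjugate to an irrational rotation (first case) --- or $f$ has a periodic point.

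Assume $f$ has a periodic point and let $k\ge 1$ be minimal with $\mathrm{Fix}(f^k)\ne\emptyset$. If $\p^k=Id$ then $f^k=Id$ and $f$ has finite order (first case), so assume $\p^k\ne Id$ and set $F=f^k$, $E=\mathrm{Fix}(F)$, $E'=\mathrm{Fix}(g^k)$ (nonempty since $g^k$ is semi conjugate to $F$). First I would analyse $F$ as in Lemma \ref{stabilizer}: for $y_0\in E'$, the isotropic geodesic parametrisation of $(\Ss^1\times\{y_0\})\cap M_h$ conjugates the restriction of $F$ to the complement of $\overline{h^{-1}(\{y_0\})}$ with an affine map $t\mapsto t/(g^k)'(y_0)$ up to a translation. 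Letting $y_0$ vary over $E'$, this forces $E$ to be finite and yields a well-defined multiplier at each point of $E$. Then Lemma \ref{continuous_rigidity} applied to $\p^k$ at any $(x_0,y_0)\in(E\times E')\cap M_h$ gives $F'(x_0)(g^k)'(y_0)=1$ and $F'(x_0)\ne 1$, so $x_0$ is hyperbolic; a point $x_0\in E$ with $F'(x_0)=1$ must instead satisfy $\{x_0\}\times E'\subset G(h)$, i.e.\ the jump of $h$ at $x_0$ is an arc swallowing all of $E'$.

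Finally I would combine the constraints: the jump arcs swallowing $E'$ have total length at most $1$, so there are at most one or two of them and their positions on $\Ss^1$ are constrained; the attracting/repelling behaviour of $F$ alternates along $E$ in cyclic order; and the relations $F'(x_0)(g^k)'(y_0)=1$ couple the multipliers. A short case analysis then leaves exactly the configurations $|E|=1$ (parabolic in $\PSL(2,\R)$), $|E|=2$ with one attracting and one repelling point (hyperbolic in $\PSL(2,\R)$), $|E|=3$ with two hyperbolic and one parabolic point, and $|E|=4$ with alternating hyperbolic and parabolic points; comparing the affine models on the complementary intervals with the explicit examples of Sections \ref{subsec:hyperbolic} and \ref{subsec:parabolic} identifies $F$ with one of them. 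Since $f$ permutes $E$ cyclically while preserving multipliers, either $f$ fixes $E$ pointwise --- then $k=1$ and $f=F$, which is on the list --- or $f$ rotates $E$ nontrivially, which forces all $F$-multipliers to coincide and exhibits $f$ as a lift to $\PSL_2(2,\R)$ of the corresponding parabolic element, the remaining case.
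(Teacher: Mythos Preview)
Your overall architecture --- trichotomy on the minimal set of $\langle f\rangle$, then a uniform treatment of the periodic case via $F=f^k$ --- is close to the paper's and would work, but the step ruling out an exceptional minimal set has a genuine gap. You write that ``$g$, having irrational rotation number, is semi conjugate to a rotation and hence cannot contract a fixed arc too much''. This is false: a Denjoy counterexample is semi conjugate to a rotation and \emph{does} contract arcs (its wandering intervals). Nothing prevents $g$ from being a Denjoy example as well, so $g^{-n_k}(\intoo{y-\e}{y+\e})$ may shrink and your lower bound on the $\omega$-volume of $J\times g^{-n_k}(\intoo{y-\e}{y+\e})$ evaporates. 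There is also a neglected issue of whether these boxes stay inside $M_h$. The paper's argument here is pointwise rather than box-volumetric: one first finds (by a standard length-summation argument) a point $x$ with $(f^n)'(x)\to 0$, then invokes Herman's result that for any circle diffeomorphism with irrational rotation number the set of $y$ with $((g^n)'(y))_{n\in\Z}$ bounded is nonempty, and finally traps the orbit $(f^n(x),g^n(y))$ in a compact $\p$-invariant subset of $M_h$ (the band between the graphs of $g\circ h$ and $g^2\circ h$) to contradict $\omega(x,y)=\omega(f^n(x),g^n(y))(f^n)'(x)(g^n)'(y)$. Your box idea does not survive without an input of this kind.

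In the periodic case your plan is essentially the paper's, repackaged. Two remarks. First, the paper isolates the key combinatorial fact as a separate lemma: for each fixed point $a_1$ of $f$ there is at most one fixed point $b_2$ of $g$ with $(a_1,b_2)\in M_h$ (because two such would force two repelling fixed points of $g$ separated by non-repelling ones that would have to lie in the single arc $G(h)\cap(\{a_1\}\times\Ss^1)$). This replaces your ``jump arcs swallowing $E'$'' bookkeeping and makes the bound $|E|\le 4$ immediate. Second, your enumeration of the $|E|=2$ outcomes omits the possibility that both fixed points of $F$ are parabolic; this occurs (it is exactly the parabolic element of $\PSL_2(2,\R)$ with fixed points) and must be listed before you pass to the $k>1$ discussion.
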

Let $h:\Ss^1\to \Ss^1$ be non decreasing of degree one,  let $\omega$ be a continuous volume form on $M_h$, and let $\p=(f,g)\in G_\omega\setminus \{Id\}$. We will classify them according to their numbers of fixed points.\\
\indent As we saw in Lemma \ref{continuous_rigidity}, fixed points of $\p$ in $M_h$ are hyperbolic. This implies that there cannot be too many of them.
\begin{lemma} \label{number_fixed_points} If $a_1\in \Ss^1$ is fixed by $f$, then there is at most one fixed point $b_2\in \Ss^1$ of $g$ such that $(a_1,b_2)\in M_h$. \end{lemma}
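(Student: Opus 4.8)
The plan is to mirror the linearisation argument of Lemma~\ref{continuous_rigidity}, but carried out along the \emph{vertical} isotropic geodesic through $a_1$ rather than a horizontal one. Suppose, for contradiction, that $g=\rho_2(\p)$ has two distinct fixed points $b_2\neq b_2'$ with $(a_1,b_2),(a_1,b_2')\in M_h$. Since $f=\rho_1(\p)$ fixes $a_1$, the map $\p=(f,g)$ preserves the vertical circle $\{a_1\}\times\Ss^1$, and as it also preserves $M_h$ it preserves the open arc $J=\{y\in\Ss^1\mid (a_1,y)\in M_h\}=\Ss^1\setminus\intff{h_l(a_1)}{h_r(a_1)}$. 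In particular $g(J)=J$, and $b_2,b_2'\in J$.

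Next I would parametrise $J$ by a maximal solution $y(t)$ of $y'(t)=\frac{1}{\omega(a_1,y(t))}$ with $y(0)\in J$. Exactly as in Lemmas~\ref{continuous_rigidity} and \ref{stabilizer}, this solution exists, is unique, and since $y'>0$ it is a diffeomorphism from an open interval $I\subset\R$ onto $J$ (it parametrises the vertical geodesic of the metric $\omega\,dx\,dy$). Setting $\beta=y^{-1}\circ g\circ y:I\to I$ (which is well defined because $g(J)=J$), a direct computation gives $\beta'(t)=\omega(a_1,g(y(t)))\,g'(y(t))/\omega(a_1,y(t))$; plugging in the area-preservation identity $\omega(a_1,g(y))\,f'(a_1)\,g'(y)=\omega(a_1,y)$, which holds because $f(a_1)=a_1$, yields $\beta'(t)=1/f'(a_1)$ for all $t\in I$. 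Hence $\beta$ is the restriction of an affine map of $\R$ of slope $1/f'(a_1)$.

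To conclude, apply Lemma~\ref{continuous_rigidity} at the fixed point $(a_1,b_2)\in M_h$ of $\p\neq Id$: it gives $f'(a_1)\neq 1$. An affine map of slope $\neq 1$ has at most one fixed point, so $\beta$ has at most one fixed point in $I$; but $y^{-1}(b_2)$ and $y^{-1}(b_2')$ are two distinct fixed points of $\beta$, a contradiction. The only points needing mild care are that $J$ is a genuine open arc (so that the geodesic parametrisation is onto) and that $g$ preserves it, both immediate from $\p$ preserving $M_h$ together with the vertical line; beyond this the argument is just the reparametrisation trick already used twice in Section~\ref{sec:convergence}, so I do not anticipate a real obstacle.
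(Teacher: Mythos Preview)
Your proof is correct, but it takes a different route from the paper's. The paper argues dynamically: from Lemma~\ref{continuous_rigidity} it gets $f'(a_1)g'(b_2)=1=f'(a_1)g'(b_2')$ with $f'(a_1)\ne 1$, so (say) both $b_2,b_2'$ are repulsive for $g$; between two repulsive fixed points on $\Ss^1$ there must lie non-repulsive fixed points $c_2,d_2$ of $g$, one in each arc; these cannot lie in $J$ (Lemma~\ref{continuous_rigidity} would force them to be repulsive as well), so both lie outside $J$, contradicting the connectedness of $\Ss^1\setminus J$. Your argument instead linearises $g$ along the vertical isotropic geodesic through $a_1$, obtaining that $g\vert_J$ is conjugate to an affine map of slope $1/f'(a_1)\ne 1$, which has at most one fixed point. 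Your approach is a direct transposition of the method of Lemmas~\ref{continuous_rigidity} and~\ref{stabilizer} to the vertical direction and is arguably cleaner, since it avoids locating auxiliary fixed points of $g$ on the whole circle; the paper's proof, on the other hand, is more elementary in that it uses only the derivative identity at fixed points and basic one-dimensional dynamics, without re-running the ODE argument.
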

\begin{proof} Assume that there are two fixed points $a_2,b_2$ of $g$ such that $(a_1,a_2)\in M_h$ and $(a_1,b_2)\in M_h$. According to Lemma \ref{continuous_rigidity}, $a_1$ is hyperbolic, so up to replacing $\p$ by $\p^{-1}$ we can assume that $a_1$ is attractive for $f$. This implies that $a_2$ and $b_2$ are both repulsive for $g$, so $g$ has non repulsive fixed points $c_2\in \intoo{a_2}{b_2}$ and $d_2\in \intoo{b_2}{a_2}$. Since they are not repulsive, $(a_1,c_2)\notin M_h$ and $(a_1,d_2)\notin M_h$. This is absurd because the set of $y\in \Ss^1$ such that $(a_1,y)\in M_h$ is connected. \end{proof}

\begin{proof}[Proof of Proposition \ref{classification_elements}] We will distinguish different cases depending on the number of fixed points of $f$ and $g$.\\

\noindent \textbf{One fixed point:} First, we assume that  $g$ has exactly one fixed point $y_0\in \Ss^1$. The interval $h^{-1}(\{y_0\})$ is stabilised by $f$, so its endpoints are fixed by $f$. If $f$ had a fixed point $x$ outside of $h^{-1}(\{y_0\})$, then $h(x)$ is fixed by $g$, which is impossible, therefore $f$ has no fixed point out of $h^{-1}(\{y_0\})$, and it is topologically conjugate to an example described in \ref{subsec:parabolic}.\\

\noindent \textbf{Two fixed points:} Now, assume that both $f$ and $g$ have at least two fixed points. Let $a_1\ne b_1$ be fixed points of $f$ and $a_2\ne b_2$ be fixed points of $g$.\\
\indent  First, let us assume that $\p$ has no fixed point in $M_h$. In this case,  non of the four points $(a_1,a_2)$, $(a_1,b_2)$, $(b_1,a_2)$, $(b_1,b_2)$ are in $M_h$. This implies that $h$ is constant on both intervals $\intoo{a_1}{b_1}$ and $\intoo{b_1}{a_1}$, therefore $f$ (as well as $g$) has exactly two fixed points, so it either has north/south dynamics (in which case it is topologically conjugate to a hyperbolic element of $\PSL(2,\R)$), or it is topologically conjugate to a parabolic element of $\PSL_2(2,\R)$.\\
\indent We can now assume that $\p$ has a fixed point in $M_h$. Up to renaming the fixed points, we can assume that $(a_1,b_2)\in M_h$. Lemma \ref{continuous_rigidity} implies that they are hyperbolic fixed points for $f$ and $g$. If they each have only two fixed points, then they have north/south dynamics, and are topologically conjugate to a hyperbolic element in $\PSL(2,\R)$. Up to replacing $\p$ by $\p^{-1}$, we can assume that $a_1$ is attractive for $f$ and $b_2$ is repulsive for $g$. \\
\indent Assume that $f$ has a third fixed point $c_1\in \intoo{b_1}{a_1}$. Because of Lemma \ref{number_fixed_points}, the points $(b_1,b_2)$ and $(c_1,b_2)$ are not in $M_h$, which implies that $h\equiv b_2$ on $\intoo{b_1}{c_1}$. Up to replacing $b_1$ and $c_1$ by the edges of the interval $h^{-1}(\{b_2\})$ (which are fixed by $f$), we can assume that $\overline{h^{-1}(\{b_2\})}=\intff{b_1}{c_1}$. Lemma \ref{number_fixed_points} implies that $a_1$ is the only fixed point of $f$ in $\intoo{c_1}{a_1}$, and that $f$ has at most one fixed point in $\intoo{b_1}{c_1}$, and that if it exists, then it is hyperbolic. This implies that $f$ has either three or four fixed points and is topologically conjugate to one of the examples described in \ref{subsec:hyperbolic}.\\
\indent Notice that in this case (where $f$ has at least three fixed points), $g$ has only two fixed points (an extra fixed point would generate a point in $M_h$ which cannot exist because of Lemma \ref{number_fixed_points}).\\

\noindent \textbf{No fixed points:} We have treated all cases where $f$ and $g$ have fixed points (note that since $f$ and $g$ are semi conjugate, if one has a fixed point then so does the other). They share the same rotation number $\alpha \in \R/\Z$. If they have no fixed points, then $\alpha \ne 0$.\\
\indent If $\alpha=\frac{p}{q}$ is rational, then the number of fixed points of  $f^q$ and $g^q$ are at least $q$ and are multiples of $q$, and $(f^q,g^q)\in G_\omega$. This shows that if $f^q\ne Id$, then $\alpha=\frac{1}{2}$ (because either $f^q$ or $g^q$ has exactly two fixed points) and $f^2$  either has north/south dynamics, has four fixed points, or is topologically conjugate to a parabolic element of $\PSL_2(2,\R)$. In the latter case, $f$ is itself topologically conjugate to a parabolic element of $\PSL_2(2,\R)$.\\ \indent If $f^2$ has north/south dynamics, let $a_1,b_1$ be its fixed points. Then $f$ conjugates $f^2$ on a neighbourhood of $a_1$ with $f^2$ on a neighbourhood of $a_2$, which is absurd because one is attractive and the other is repulsive.\\
\indent  If $f^2$ has some hyperbolic points (which is the case if it has four fixed points), then let $a_1$ be a   hyperbolic periodic points of $f$ and $b_1=f(a_1)$. They satisfy $(f^2)'(a_1)=f'(b_1)f'(a_1)=(f^2)'(b_1)$. This is impossible since one is attractive for $f^2$ and the other is repulsive. Therefore, if $f$ has no fixed points and $\alpha=\frac{p}{q} \in \Q$, then either $f^q=Id$, and $f$ is topologically conjugate to a rotation, either $f$ is topologically conjugate to a parabolic element of $\PSL_2(2,\R)$.\\
\indent If $\alpha \notin \Q$, then there are two possibilities: either $f$ is topologically conjugate to a rotation, either $f$ has an invariant Cantor set and is semi conjugate to a rotation (a Denjoy example, which cannot be the case if $f$ is $C^2$, see \cite{navas_book} for a thorough treatment of Denjoy diffeomorphisms). We are going to show that $f$ and $g$ are both topologically conjugate to a rotation.\\
\indent Assume that $f$ is a Denjoy example. Let $K\subset \Ss^1$ be the invariant Cantor set, and let $I$ be a connected component of $\Ss^1\setminus K$. Using the fact that $f^n(I)\cap f^p(I)=\emptyset$ if $n\ne p$, a simple calculation (see exercise 3.5.24 in \cite{navas_book}) shows that:
$$ \int_I \left(\sum_{n\in \Z} (f^n)'(x)\right) dx = \sum_{n\in \Z} \vert f^n(I)\vert  \le 1$$
\indent In particular, the set of points  $x\in \Ss^1$  such that $(f^n)'(x)\to 0$ as $n\to \infty$ is a set of full Lebesgue measure in $\Ss^1\setminus K$, hence non empty. The points $y\in \Ss^1$ such that the sequence $((g^n)'(y))_{n\in \Z}$ is bounded form a non empty set (\cite{Herman}, chapter X) invariant under $g$.\\
\indent Note that $G(g\circ h)\subset M_h$. Indeed, if $h$ is continuous at $x$, then $g(h(x))\ne h(x)$, and if $h$ is discontinuous at $x$, then the interval $\intoo{h_l(x)}{h_r(x)}$ is wandering for $g$, therefore cannot intersect its image $g(\intoo{h_l(x)}{h_r(x)})$. Since $\p^2\in G_\omega$, we also have $G(g^2\circ h)\subset M_h$. Up to replacing $\p$ by $\p^{-1}$, we can assume that $K=\{ (x,y)\in \Ss^1\times \Ss^1 \vert y\in \intff{g(h_l(x))}{g^2(h_r(x))}\}$ is a compact subset of $M_h$, invariant under $\p$, with non empty interior. Let $x\in \Ss^1$ be such that $(f^n)'(x)\to 0$ as $n\to \infty$, and let $y\in \Ss^1$ be  such that  the sequence $((g^n)'(y))_{n\in \Z}$ is bounded and $(x,y)\in K$. Let $M=\max_K \omega$ and $m=\min_K \omega$. We then have:
$$\underbrace{(f^n)'(x)(g^n)'(y)}_{\to 0} = \frac{\omega(x,y)}{\omega(f^n(x),g^n(y))} \in \left[ \frac{m}{M},\frac{M}{m} \right] $$ \indent This is a contradiction, therefore $f$ cannot be a Denjoy example, so it is topologically conjugate to a rotation, which is an element of $\PSL(2,\R)$.
\end{proof}

%Ancienne version pour les elliptiques, avec des erreurs (on gare au cas où)
\begin{comment} First, assume that $f$ is a Denjoy example and that $g$ is topologically conjugate to a rotation. Up to changing $h$ and $\omega$, we can assume that $g=R_\alpha$. Define $u:\Ss^1 \to \R$ by $u(x)=\Log(\omega(x,h\circ f(x)))$. Since $h(f(x))=R_\alpha(h(x))\ne h(x)$, it is well defined. The intervals between discontinuities of $h$ form an open set invariant under $R_\alpha$, hence is empty, and $h$ is continuous. We have found a continuous function $u:\Ss^1\to \R$ such that $\log(f'(x))=u(x)-u(f(x))$ for all $x\in \Ss^1$. It is shown in \cite{Athanassopoulos} that such a function cannot exist.\\
\indent Now assume that $f$ and $g$ both are Denjoy examples. Let $K_f$ (resp. $K_g$) be the unique closed minimal invariant set for $f$ (resp. for $g$). Let $I$ be a wandering interval of $f$, i.e. a connected component of $\Ss^1\setminus K_f$. Then $f^n(I)\cap I=\emptyset$ for all $n\ne 0$.
\end{comment}

\section{Non elementary examples} \label{sec:examples} This section is dedicated to the proof of Theorem \ref{free_group_example}. The main idea in constructing a non elementary example consists in starting with an appropriately chosen representation $\rho_1:\mathbb F_3\to \PSL(2,\R)$, then considering $\rho_2:\mathbb F_3\to \Diff(\Ss^1)$ where we only modified one of the generators by introducing a parabolic fixed point (just as in the elementary case).   Proposition \ref{elementary_non_convergence} shows that the images of the generators by $(\rho_1,\rho_2)$ each preserve a volume form, and the difficulty consists in showing that we can find one that is preserved by all three. The proof is almost identical to the proof of Theorem 1.4 in \cite{Monclaira}.\\
\indent One of the main ingredients in this proof is the construction of a flow on a $3$-manifold (a deformation of the geodesic flow on $\rm{T}^1\h^2/\rho_1(\Gamma)$) that has a transverse structure given by $(\rho_1,\rho_2)$. This construction follows an idea of Ghys used in two different settings. The first one, found in \cite{Gh93}, was to show a rigidity theorem for actions of surface groups on the circle, and the second was the construction of (the only) exotic Anosov flows with smooth weak stabe and weak unstable foliations on 3-manifolds  in \cite{Gh92}, called quasi-Fuchsian flows. However, Ghys used a local construction (given a certain atlas on  $\rm{T}^1\h^2/\rho_1(\Gamma)$), whereas we will take a global approach.

\subsection{Hyperbolic flows} \label{subsec:hyperbolic_flows}  Let us recall a few basic notions of hyperbolic flows. Let $\p^t$ be a complete flow generated by a vector field $X$ on a manifold $M$. We say that a compact invariant set $K\subset M$ is \textbf{hyperbolic} if there are positive constants $C,\lambda$ and a decomposition of tangent spaces $T_xM=E^s_x\oplus E^u_x\oplus \R.X$ for each $x\in K$ such that:
$$ \forall x\in K~\forall v\in E^s_x~\forall t\ge 0~~\Vert D\p^t_x(v)\Vert \le C e^{-\lambda t} \Vert v\Vert $$
$$\forall x\in K~\forall v\in E^u_x~\forall t\le 0~~\Vert D\p^t_x(v)\Vert \le C e^{\lambda t} \Vert v\Vert
$$
\indent The norm $\Vert . \Vert$ denotes the norm given by any Riemannian metric on $M$ (since $K$ is compact, the definition does not depend on the choice of a Riemannian metric). If the whole manifold $M$ is a hyperbolic set, then we say that $\p^t$ is an Anosov flow.\\
\indent Let $\p^t$ be a smooth flow on a manifold $M$. If $K\subset M$ is a compact hyperbolic set and $x\in K$, then we define the \textbf{stable and unstable manifolds} through $x$:
$$ W^s(x)=\{ z\in M \vert d(\p^t(x),\p^t(z))\underset{t\to +\infty}{\longrightarrow} 0\}$$
$$W^u(x)=\{ z\in M \vert d(\p^t(x),\p^t(z))\underset{t\to -\infty}{\longrightarrow}  0\}$$
\indent The Stable Manifold Theorem states that they are submanifolds of $M$ tangent to $E^s$ and $E^u$ at $x$ (see \cite{HP}).\\
\indent The most important fact for us is that the limit $d(\p^t(x),\p^t(z))\to 0$ is a uniformly decreasing exponential: for all compact set $A$ and all $\e>0$, there is a constant $C'>0$ such that:
$$\forall x\in K ~\forall z\in W^s(x)\cap A~\forall t\ge 0~~ d(\p^t(x),\p^t(z))\le C' e^{-(\lambda-\e) t} $$
$$\forall x\in K ~\forall z\in W^u(x)\cap A~\forall t\le 0~~ d(\p^t(x),\p^t(z))\le C' e^{(\lambda-\e) t} $$
\indent We will denote by $W^s(K)$ (resp. $W^u(K)$) the union $W^s(K)=\bigcup_{x\in K} W^s(x)$ (resp. $W^u(K)=\bigcup_{x\in K} W^u(x)$).

\subsection{A cohomological reformulation} Searching for an invariant volume form is equivalent to solving a cohomological equation. Let $\omega_0$ be a volume form on a smooth manifold $M$, and let $\rho : \Gamma \to \Diff(M)$ be a group representation. Any other volume form on $M$ is a multiple of $\omega_0$, hence if $\gamma \in \Gamma$, then we can write $\rho(\gamma)^*\omega_0=e^{-\alpha_{\gamma}}\omega_0$.  The chain rule shows that $\alpha_{\gamma}$ satisfies the cocycle relation $\alpha_{\gamma'\gamma}=\alpha_{\gamma'}\circ \rho(\gamma) + \alpha_{\gamma}$.\\
\indent Let $\omega=e^{\sigma}\omega_0$ be a volume form on $M$. We can compute the pull back $\rho(\gamma)^*\omega=e^{\sigma\circ\rho(\gamma)}\rho(\gamma)^*\omega_0=e^{\sigma\circ\rho(\gamma)-\sigma-\alpha_{\gamma}}\omega$, hence $\omega$ is preserved by $\Gamma$ if and only if $\sigma\circ \rho(\gamma)-\sigma = \alpha_{\gamma}$ for all $\gamma\in \Gamma$. In other words, we wish to show that the cocycle $\alpha_{\gamma}$ is a coboundary. \\
\indent The issue with this formulation of the problem is that cohomological equations for the action $(\rho_1,\rho_2)$ on $M_h$ are difficult to solve. We will now see how we can translate the problem to a cohomology equation for a hyperbolic flow, which is a much more simple situation. In this setting, a cocycle is a smooth  function $\alpha:M\to \R$ (where $M$ is the manifold on which we study a flow $\p^t$), and we look for a smooth function $\sigma :M\to \R$ such that $\sigma(\p^t(x))-\sigma(x)=\int_0^t\alpha(\p^s(x))ds$ for all $(x,t)\in M\times \R$.\\
\indent There is a first necessary condition for the existence of a solution: if $x\in \mathrm{Per}(\p)$, i.e. if there is $T>0$ such that $\p^T(x)=x$, then $\int_0^T\alpha(\p^s(x))ds=0$.  Livšic's Theorem states that this condition is sufficient in order to find a solution on a compact hyperbolic set.

\begin{theo} Let $\p^t$ be a smooth flow on a manifold $M$, and let $K$ be a compact hyperbolic set, such that the action on $K$ has a dense orbit. If $\alpha : K\to \R$ is a H\"older continuous function such that $\int_0^T\alpha(\p^s(x))ds=0$ for all $x\in K$ such that $\p^T(x)=x$, then there is a unique H\"older continuous function $\sigma :K\to \R$ such that $\sigma(\p^t(x))-\sigma(x)=\int_0^t\alpha(\p^s(x))ds$ for all $(x,s)\in K\times \R$. \end{theo}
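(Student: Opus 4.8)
The plan is to run Livšic's classical argument in the setting of a flow on a (locally maximal) compact hyperbolic set. Fix a point $p\in K$ whose orbit $\mathcal O(p)=\{\p^t(p):t\in\R\}$ is dense in $K$, and define $\sigma$ on $\mathcal O(p)$ by the only formula that has a chance of working,
\[ \sigma(\p^t(p))=\int_0^t \alpha(\p^s(p))\,ds. \]
If $K$ happens to be a single periodic orbit this is well defined precisely because of the hypothesis on periodic integrals; otherwise $\mathcal O(p)$ is injectively parametrised by $\R$ and there is nothing to check. Everything then reduces to showing that $\sigma$ is \emph{uniformly} H\"older on $\mathcal O(p)$: once we have constants $C>0$ and $\theta\in(0,1]$ (the H\"older exponent of $\alpha$) with $|\sigma(x)-\sigma(y)|\le C\,d(x,y)^\theta$ for $x,y\in\mathcal O(p)$ with $d(x,y)$ small, $\sigma$ is uniformly continuous on $\mathcal O(p)$, hence extends (uniquely) to a continuous function on $K=\overline{\mathcal O(p)}$; being continuous on a compact set it is bounded, and the H\"older bound for close pairs together with this bound for far pairs makes the extension H\"older on all of $K$.

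The heart of the proof is this H\"older estimate, which comes from the Anosov closing (shadowing) lemma for the hyperbolic set $K$. Take $x=\p^a(p)$, $y=\p^b(p)$ with $a<b$, $d(x,y)<\delta$ for $\delta$ small, and set $T=b-a$. The closing lemma produces a periodic point $z\in K$ of period $T'$ with $|T'-T|\le C_1 d(x,y)$ whose orbit shadows the segment $\{\p^t(x):0\le t\le T\}$ with the \emph{exponentially} decaying error
\[ d(\p^t(x),\p^{t}(z))\le C_1\, d(x,y)\, e^{-\lambda\min(t,\,T-t)},\qquad 0\le t\le T, \]
which is exactly the uniform stable/unstable contraction recalled in \S\ref{subsec:hyperbolic_flows} (here one uses that the hyperbolic sets arising in the applications are locally maximal, so the shadowing orbit stays in $K$, where $\alpha$ is defined and the periodic hypothesis applies). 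Since $\sigma(x)-\sigma(y)=-\int_0^T\alpha(\p^t(x))\,dt$ while $\int_0^{T'}\alpha(\p^t(z))\,dt=0$ by hypothesis,
\[ |\sigma(x)-\sigma(y)|\le \int_0^T\bigl|\alpha(\p^t(x))-\alpha(\p^t(z))\bigr|\,dt+\Vert\alpha\Vert_\infty\,|T-T'|. \]
H\"older continuity of $\alpha$ and the shadowing bound give $|\alpha(\p^t(x))-\alpha(\p^t(z))|\le \Vert\alpha\Vert_{C^\theta}C_1^\theta\, d(x,y)^\theta e^{-\lambda\theta\min(t,T-t)}$, and $\int_0^T e^{-\lambda\theta\min(t,T-t)}\,dt\le \tfrac{2}{\lambda\theta}$ uniformly in $T$; combined with the linear term (absorbed into $d(x,y)^\theta$ since $d(x,y)\le 1$) this yields $|\sigma(x)-\sigma(y)|\le C\,d(x,y)^\theta$ with $C$ independent of $x,y$.

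It remains to check that the extended $\sigma$ solves the cohomological equation everywhere and to prove uniqueness. For fixed $t$, both $x\mapsto\sigma(\p^t(x))-\sigma(x)$ and $x\mapsto\int_0^t\alpha(\p^s(x))\,ds$ are continuous on $K$ and agree on the dense set $\mathcal O(p)$, hence agree on all of $K$. If $\sigma_1,\sigma_2$ are two H\"older solutions, then $\psi=\sigma_1-\sigma_2$ satisfies $\psi(\p^t(x))=\psi(x)$ for all $x,t$; being continuous and constant along the dense orbit $\mathcal O(p)$ it is constant on $K$, so the solution is unique up to an additive constant (and genuinely unique after a normalisation such as $\sigma(p)=0$). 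The one genuinely delicate point in this scheme is the estimate of the second paragraph: what makes it work is not merely that shadowing errors are small but that they decay exponentially away from the endpoints of the orbit segment, which is precisely what keeps the integral bound uniform in the unbounded return time $T$ — without that, the argument would fail.
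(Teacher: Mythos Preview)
The paper does not give its own proof of this theorem: it quotes it as a known result and sends the reader to Katok--Hasselblatt \cite{KH} (with Livšic's original \cite{livsic} for the Anosov case). Your argument is precisely the classical Livšic proof that one finds there --- define $\sigma$ along a dense orbit, use the Anosov closing lemma together with the exponential shadowing estimate to get a uniform H\"older bound, then extend by continuity --- so there is nothing to compare: your proposal and the cited proof coincide.

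One remark worth making explicit. As stated in the paper the hypothesis is only ``compact hyperbolic set with a dense orbit''; but the closing lemma step, as you yourself note parenthetically, needs local maximality so that the shadowing periodic orbit actually lies in $K$ (otherwise $\alpha$ is not defined on it and the periodic-orbit hypothesis cannot be invoked). This is the standard hypothesis in \cite{KH}, and it is satisfied in the application (the non-wandering set of an Axiom~A flow), so there is no real issue --- but strictly speaking your proof, like the standard one, proves the theorem under that extra assumption rather than the bare statement written above. Finally, your observation that uniqueness holds only up to an additive constant is correct; the statement as written is slightly imprecise on this point.
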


As stated, the proof can be found in \cite{KH} (Livšic's work in \cite{livsic} deals with Anosov flows on compact manifolds).\\
\indent However,  Livšic's Theorem will not be of any use in the proof of Theorem \ref{free_group_example}, because we will already have a solution on the hyperbolic set. Instead, we will show that given a solution on a compact hyperbolic set $K$, we can extend it to $W^s(K)\cup W^u(K)$. When translating the problem back to the action on $M_h$, this will give a volume form invariant at points of $L_{\rho_1(\Gamma)}\times \Ss^1\cup\Ss^1\times L_{\rho_2(\Gamma)}$, and there will still be some work involved in order to extend the solution to $M_h$ (which is the content of subsection \ref{subsec:extending}).

\subsection{Convex cocompact groups and geodesic flows} \label{subsection:convex} Let $\Gamma\subset \PSL(2,\R)$ be a discrete non elementary subgroup such that the limit set $L_\Gamma$ is a Cantor set. The \textbf{convex hull} of $\Gamma$ is the subset $C_\Gamma$ of $\h^2$ bounded by geodesics joining fixed points of hyperbolic elements of $\Gamma$. We say that $\Gamma$ is \textbf{convex cocompact} if $C_\Gamma /\Gamma$ is compact. A particular case of Ahlfors' Finiteness Theorem (see \cite{ahlfors} or \cite{bers}) states that any finitely generated discrete subgroup of $\PSL(2,\R)$ with only hyperbolic elements is convex cocompact.\\
\indent If $\Gamma\subset \PSL(2,\R)$ is convex cocompact, then denote by $\p^t$ the geodesic flow on $\mathrm{T}^1\h^2/\Gamma$ (remark that even if $\h^2/\Gamma$ is not a manifold, the unit bundle $\mathrm{T}^1\h^2/\Gamma$ always is when $\Gamma$ is discrete).\\
\indent The \textbf{non wandering set} $\Omega_\p$ of a flow is the set of points $x$ such that there are sequences $x_n\to x$ and  $t_n\to \infty$ satisfying $\p^{t_n}(x_n)\to x$. For the geodesic flow,  $\Omega_\p$ can be described as follows: its lift to $\rm{T}^1\h^2$ is the set of vectors tangent to a geodesic that lies entirely in $C_\Gamma$. The important property of $\p^t$ is that it is an Axiom A flow: $\Omega_\p$ is a compact hyperbolic set for $\p^t$, and it is equal to the closure of periodic orbits $\mathrm{Per}(\p)$ (Axiom A flows are a generalization of Anosov flows that can be defined even on non compact manifolds). We will now use a presentation of the geodesic flow that is particularly convenient when we define perturbations.

\indent Let $\Sigma_3 = \{(x_-,x_0,x_+)\in (\Ss^1)^3 \vert x_-<x_0<x_+<x_-\}$ be the set of ordered triples of $\Ss^1$. We can identify $\rm{T}^1\h^2$ and $\Sigma_3$ in the following way: given a unit vector $v\in \rm{T}^1\h^2$, we consider $x_-$ and $x_+$ the limits at $-\infty$ and $+\infty$ of the geodesic given by $v$, and $x_0$ is the limit at $+\infty$ of the geodesic passing through the base point of $v$ in an orthogonal direction, oriented to the right of $v$ (see Figure \ref{fig:geodesics}).\\
\begin{figure}[h]
\begin{tikzpicture}[line cap=round,line join=round,>=triangle 45,x=1cm,y=1cm,scale=0.55]
\clip(-9,-4.98) rectangle (16.96,6.3);
\draw(2.52,0.82) circle (5.32cm);
\draw [shift={(10.74,3.34)}] plot[domain=2.77:4.11,variable=\t]({1*6.75*cos(\t r)+0*6.75*sin(\t r)},{0*6.75*cos(\t r)+1*6.75*sin(\t r)});
\draw [shift={(3.3,9.63)}] plot[domain=4.81:5.27,variable=\t]({1*7.01*cos(\t r)+0*7.01*sin(\t r)},{0*7.01*cos(\t r)+1*7.01*sin(\t r)});
\draw [->] (4.02,2.65) -- (3.81,4.72);
\draw (3.1,4) node[anchor=north west] {$v$};
\draw (4.6,6.4) node[anchor=north west] {$x_+$};
\draw (7,-2) node[anchor=north west] {$x_-$};
\draw (7.1,4.1) node[anchor=north west] {$x_0$};
\draw (3.99,3.06)-- (4.3,3.1);
\draw (4.3,3.1)-- (4.32,2.69);
\end{tikzpicture}
\caption{Identification between $\rm{T}^1\h^2$ and $\Sigma_3$} \label{fig:geodesics}
\end{figure}
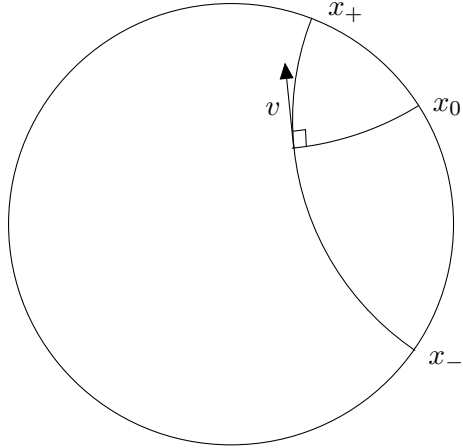

\indent On $\Sigma_3$, the geodesic vector field is a rescaling of the constant vector field $(0,1,0)$, and the action $\alpha$ of $\PSL(2,\R)$ is the diagonal action. The geodesic flow $\p^t$ is defined on the quotient manifold $M=\Sigma_3/\alpha(\Gamma)\approx \rm{T}^1\h^2/\Gamma$. The image of a point $(x_-,x_0,x_+)$ in $M$ is in $\Omega_\p$ if and only if $(x_-,x_+)\in L_{\Gamma}\times L_{\Gamma}$, and it is in $\rm{Per}(\p)$ if and only if  $(x_-,x_+)$ is the pair of fixed points of an element $\gamma\in \Gamma$.

\subsection{Choice of $\rho_1$ and construction of $\rho_2$}  Let $T$ be the twice punctured torus. Its fundamental group is the free group on three generators $\mathbb F_3=\langle a,b,c\rangle$. Given a complete hyperbolic structure on $T$ such that neighbourhoods of the omitted points have infinite volume, we obtain a convex cocompact representation  $\rho_1:\mathbb F_3\to \PSL(2,\R)$.\\
\indent Let $\delta_1$ and $\delta_2$ be the simple loops going around the omitted points (see Figure \ref{fig:punctured_torus}). If $I$ is a connected component of $\Ss^1\setminus L_{\rho_1(\mathbb F_3)}$ and $\gamma$ stabilises $I$, then $\gamma$ is conjugate to a power of $\delta_1$ or $\delta_2$. It is explained in \cite{Button} how the generators can be chosen in a way the $\delta_1=abc$ and  $\delta_2=cba$. This shows that $a,b,\delta_1$ freely generate $\pi_1(T)$.

\begin{figure}[h]
\begin{tikzpicture}[line cap=round,line join=round,>=triangle 45,x=1.0cm,y=1.0cm,scale=0.7]
\clip(-4,-4.3) rectangle (18.44,3);
\draw [shift={(5.02,-6.64)}] plot[domain=1.21:1.93,variable=\t]({1*9.16*cos(\t r)+0*9.16*sin(\t r)},{0*9.16*cos(\t r)+1*9.16*sin(\t r)});
\draw [shift={(12.08,11.77)}] plot[domain=4.34:4.75,variable=\t]({1*10.56*cos(\t r)+0*10.56*sin(\t r)},{0*10.56*cos(\t r)+1*10.56*sin(\t r)});
\draw [shift={(-1.98,11.79)}] plot[domain=4.67:5.08,variable=\t]({1*10.56*cos(\t r)+0*10.56*sin(\t r)},{0*10.56*cos(\t r)+1*10.56*sin(\t r)});
\draw [shift={(5.02,5.12)}] plot[domain=4.35:5.08,variable=\t]({1*9.16*cos(\t r)+0*9.16*sin(\t r)},{0*9.16*cos(\t r)+1*9.16*sin(\t r)});
\draw [shift={(-1.98,-13.31)}] plot[domain=1.2:1.61,variable=\t]({1*10.56*cos(\t r)+0*10.56*sin(\t r)},{0*10.56*cos(\t r)+1*10.56*sin(\t r)});
\draw [shift={(12.08,-13.29)}] plot[domain=1.53:1.94,variable=\t]({1*10.56*cos(\t r)+0*10.56*sin(\t r)},{0*10.56*cos(\t r)+1*10.56*sin(\t r)});
\draw [shift={(4.85,0.83)}] plot[domain=3.69:5.76,variable=\t]({1*2.69*cos(\t r)+0*2.69*sin(\t r)},{0*2.69*cos(\t r)+1*2.69*sin(\t r)});
\draw [shift={(4.76,-3.32)}] plot[domain=0.89:2.2,variable=\t]({1*2.39*cos(\t r)+0*2.39*sin(\t r)},{0*2.39*cos(\t r)+1*2.39*sin(\t r)});
\draw [shift={(-3.59,-0.87)}] plot[domain=-0.56:0.62,variable=\t]({1*3.79*cos(\t r)+0*3.79*sin(\t r)},{0*3.79*cos(\t r)+1*3.79*sin(\t r)});
\draw [shift={(7.44,-0.76)}] plot[domain=-0.59:0.59,variable=\t]({1*3.77*cos(\t r)+0*3.77*sin(\t r)},{0*3.77*cos(\t r)+1*3.77*sin(\t r)});
\draw [shift={(4.23,-0.62)},dash pattern=on 6pt off 6pt]  plot[domain=2.75:3.6,variable=\t]({1*5.12*cos(\t r)+0*5.12*sin(\t r)},{0*5.12*cos(\t r)+1*5.12*sin(\t r)});
\draw [shift={(13.86,-0.76)},dash pattern=on 6pt off 6pt]  plot[domain=2.58:3.71,variable=\t]({1*3.89*cos(\t r)+0*3.89*sin(\t r)},{0*3.89*cos(\t r)+1*3.89*sin(\t r)});
\draw (0.11,0.99) node[anchor=north west] {$\delta_1$};
\draw (11.31,0.99) node[anchor=north west] {$\delta_2$};
\end{tikzpicture}
 \caption{The twice punctured torus} \label{fig:punctured_torus}
\end{figure}
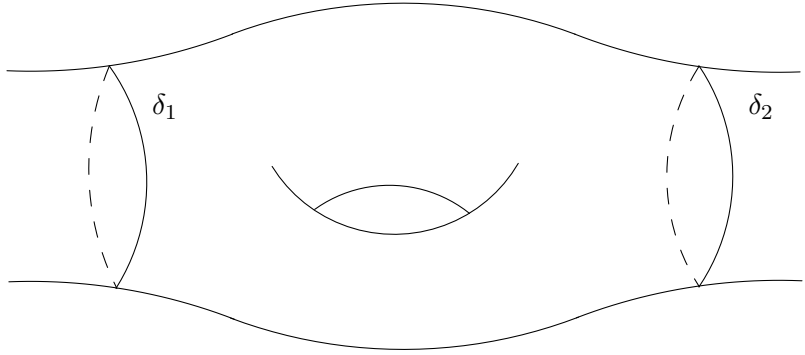

\indent  Let $N,S\in \Ss^1$ be the fixed points of $\rho_1(\delta_1)$, such that $\intoo{N}{S}$ is a connected component of $\Ss^1\setminus L_{\rho_1(\mathbb F_3)}$. Let $f\in \Diff(\Ss^1)$ be equal to $\rho_1(a)$ on $\intff{S}{N}$, and have exactly one fixed point $P_a$ in $\intoo{N}{S}$. We define $\rho_2:\mathbb F_3 \to \Diff(\Ss^1)$ by $\rho_2(a)=\rho_1(a)$, $\rho_2(b)=\rho_1(b)$ and $\rho_2(\delta_1)=f$. \\
\indent Since $f$ coincides with $\rho_1(\delta_1)$ on the limit set, we see that $\rho_2(\gamma)$ and $\rho_1(\gamma)$ coincide on $L_{\rho_1(\mathbb F_3)}$ for all $\gamma \in \mathbb F_3$. More precisely, they are equal everywhere except on intervals that are bounded by images of $N$ and $S$. Therefore, for any $\gamma\in \mathbb F_3$ that is not a power of a conjugate of $\delta_1$, $\rho_2(\gamma)$ has two fixed points, which are hyperbolic.

\begin{lemma} $L_{\rho_2(\mathbb F_3)} = L_{\rho_1(\mathbb F_3)}$. \end{lemma}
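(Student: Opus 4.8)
The inclusion $L_{\rho_1(\mathbb F_3)}\subset L_{\rho_2(\mathbb F_3)}$ should be easy: since $\rho_2(\gamma)$ and $\rho_1(\gamma)$ agree on $L_{\rho_1(\mathbb F_3)}$ for every $\gamma$, the set $L_{\rho_1(\mathbb F_3)}$ is closed and $\rho_2(\mathbb F_3)$-invariant, and the orbit of any of its points under $\rho_2(\mathbb F_3)$ coincides with the corresponding $\rho_1(\mathbb F_3)$-orbit, hence is dense in $L_{\rho_1(\mathbb F_3)}$. So $L_{\rho_1(\mathbb F_3)}$ is a closed invariant set on which orbits are dense, and since $\rho_2(\mathbb F_3)$ is non elementary (it contains, e.g., the hyperbolic elements $\rho_2(a)=\rho_1(a)$ and $\rho_2(b)=\rho_1(b)$ with distinct fixed point pairs, so it has no finite orbit) the trichotomy recalled in section \ref{sec:circle_dynamics} forces $L_{\rho_2(\mathbb F_3)}\subset L_{\rho_1(\mathbb F_3)}$: the unique minimal set is contained in every closed invariant set.

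For the reverse inclusion $L_{\rho_1(\mathbb F_3)}\subset L_{\rho_2(\mathbb F_3)}$, the point is that $\rho_1$ and $\rho_2$ differ only on the orbit of the interval $\intoo{N}{S}$ — more precisely, for every $\gamma$, $\rho_2(\gamma)$ and $\rho_1(\gamma)$ coincide outside a union of intervals each bounded by images $\rho_1(\delta)(N)$, $\rho_1(\delta)(S)$ of $N$ and $S$. Since $N,S\in L_{\rho_1(\mathbb F_3)}$ are fixed points of the parabolic-like (in $\rho_1$, hyperbolic) element $\rho_1(\delta_1)$, their whole $\rho_1(\mathbb F_3)$-orbit lies in $L_{\rho_1(\mathbb F_3)}$, and this orbit is dense there. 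Each such point $\rho_1(\delta)(N)$ is also fixed by $\rho_2(\delta\delta_1\delta^{-1})$ (which agrees with $\rho_1(\delta\delta_1\delta^{-1})$ near that point, as it lies in the limit set), hence belongs to $L_{\rho_2(\mathbb F_3)}$. Because $L_{\rho_2(\mathbb F_3)}$ is closed and contains the dense subset $\rho_1(\mathbb F_3).\{N,S\}$ of $L_{\rho_1(\mathbb F_3)}$, we conclude $L_{\rho_1(\mathbb F_3)}\subset L_{\rho_2(\mathbb F_3)}$.

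Combining the two inclusions gives the equality. The only genuinely delicate point is to be careful about what "coincide on $L_{\rho_1(\mathbb F_3)}$" gives at the endpoints $N$ and $S$ themselves, and to check that adding the fixed point $P_a$ of $f$ inside $\intoo{N}{S}$ does not enlarge the limit set — this is handled by the observation above that $P_a$ and all its $\rho_2(\mathbb F_3)$-translates sit inside the (countably many) wandering complementary intervals of $L_{\rho_1(\mathbb F_3)}$, so they never accumulate on anything outside $L_{\rho_1(\mathbb F_3)}$. I expect the main obstacle, such as it is, to be bookkeeping: verifying that every point of the $\rho_2(\mathbb F_3)$-orbit of $P_a$ lands in a complementary interval of $L_{\rho_1(\mathbb F_3)}$ and that the endpoints of these intervals are exactly the relevant images of $N$ and $S$, which follows from the description of stabilizers of complementary intervals (conjugates of powers of $\delta_1,\delta_2$) recalled just before the lemma.
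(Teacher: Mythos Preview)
Your first paragraph is already the paper's entire proof, and it is complete as written: you show that $L_{\rho_1(\mathbb F_3)}$ is closed, $\rho_2(\mathbb F_3)$-invariant, and that $\rho_2$-orbits of its points are dense in it (because they coincide with $\rho_1$-orbits). That is precisely the statement that $L_{\rho_1(\mathbb F_3)}$ is a minimal set for $\rho_2(\mathbb F_3)$; since $\rho_2(\mathbb F_3)$ is non elementary, uniqueness of the minimal set gives $L_{\rho_2(\mathbb F_3)}=L_{\rho_1(\mathbb F_3)}$ directly. You undersell your own argument by extracting only the inclusion $L_{\rho_2(\mathbb F_3)}\subset L_{\rho_1(\mathbb F_3)}$ from it.

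Your second paragraph is therefore unnecessary, and it also contains a genuine gap: the inference ``$\rho_1(\delta)(N)$ is fixed by $\rho_2(\delta\delta_1\delta^{-1})$, hence belongs to $L_{\rho_2(\mathbb F_3)}$'' is not valid without further justification. Being a fixed point of an element of a non elementary group does not force a point into the minimal set; your own example witnesses this, since $P_a$ is fixed by $\rho_2(\delta_1)$ yet lies in the complementary interval $\intoo{N}{S}$ and is certainly not in $L_{\rho_2(\mathbb F_3)}$. The correct way to place $N$ in $L_{\rho_2(\mathbb F_3)}$ is exactly the density argument you already gave in the first paragraph. The third paragraph, about tracking where the $\rho_2$-orbit of $P_a$ goes, is likewise superfluous once the first paragraph is read as giving equality.
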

\begin{proof}  The compact set   $L_{\rho_1(\mathbb F_3)}$ is invariant under $\rho_2$, which shows  $L_{\rho_2(\mathbb F_3)} \subset  L_{\rho_1(\mathbb F_3)}$ because of the uniqueness of the minimal compact invariant set. Since the actions on $L_{\rho_1(\mathbb F_3)}$ are equal, the orbits are dense and $L_{\rho_2(\mathbb F_3)} = L_{\rho_1(\mathbb F_3)}$.
\end{proof}
We will denote this set by $L=L_{\rho_1(\mathbb F_3)} = L_{\rho_2(\mathbb F_3)}$.

\begin{lemma} The representations $\rho_1$ and $\rho_2$ are semi conjugate by a map that is the identity on $L$. \end{lemma}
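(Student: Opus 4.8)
The plan is to write $h$ down explicitly: it will be the identity on $L$ and on every component of $\Ss^1\setminus L$ outside the $\mathbb F_3$-orbit of the funnel interval $\intoo{N}{S}$, and on that orbit it will be built from a single local model absorbing the discrepancy between $\rho_1$ and $\rho_2$. The first step is to localize that discrepancy. Put $\mathcal U=\bigcup_{\gamma\in\mathbb F_3}\rho_1(\gamma)(\intoo{N}{S})$, a $\rho_1$-invariant union of components of $\Ss^1\setminus L$. Since $f$ agrees with $\rho_1(\delta_1)$ on $\Ss^1\setminus\intoo{N}{S}$ and both maps preserve $\intoo{N}{S}$ (so $f^{-1}$ and $\rho_1(\delta_1)^{-1}$ agree there too), an induction on the length of $\gamma$ as a word in $a^{\pm1},b^{\pm1},\delta_1^{\pm1}$ yields $\rho_1(\gamma)=\rho_2(\gamma)$ on $\Ss^1\setminus\mathcal U$ for all $\gamma$: if $\gamma=s\gamma'$ and $x\notin\mathcal U$ then $\rho_1(\gamma')(x)=\rho_2(\gamma')(x)$ still lies in $\Ss^1\setminus\mathcal U\subseteq\Ss^1\setminus\intoo{N}{S}$, where $\rho_1(s)=\rho_2(s)$. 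In particular $\mathcal U$ is also $\rho_2$-invariant, $L\subseteq\Ss^1\setminus\mathcal U$, and for each component $J\in\mathcal U$ the maps $\rho_1(\gamma)$ and $\rho_2(\gamma)$ send $J$ onto the same component of $\Ss^1\setminus L$, since they agree on $\partial J\subseteq L$.

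Next I would build the local model on $\intoo{N}{S}$. There $\rho_1(\delta_1)$ acts freely and properly, hence is topologically conjugate to a unit translation of $\R$, while $f$ has the single fixed point $P_a$, which cuts $\intoo{N}{S}$ into two subintervals on each of which $f$ again acts freely and properly. I would produce a non-decreasing surjection $h_0\colon\intff{N}{S}\to\intff{N}{S}$ with $h_0(N)=N$, $h_0(S)=S$ and $\rho_1(\delta_1)\circ h_0=h_0\circ f$ by collapsing one of the closed half-intervals $\intff{N}{P_a}$, $\intff{P_a}{S}$ to its extreme point $N$, resp.\ $S$ (legitimate because $\rho_1(\delta_1)$ fixes both $N$ and $S$), and letting $h_0$ on the remaining half be an orientation-preserving homeomorphism onto $\intff{N}{S}$ conjugating the free $\Z$-action of $f$ there to that of $\rho_1(\delta_1)$ — possible since two free proper $\Z$-actions on an interval inducing the same orientation are conjugate. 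The intertwining relation is then checked on the two pieces separately. Note that a semi-conjugacy in the other direction, from $\rho_1(\delta_1)$ to $f$ and still fixing $N$ and $S$, cannot exist: the preimage of $P_a$ would have to be a bounded $\langle\delta_1\rangle$-invariant subinterval of $\intoo{N}{S}$, which is impossible. This asymmetry — that $h$ must collapse, not open — is the one genuinely delicate point; everything else is bookkeeping.

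Finally I would globalize. For a component $J=\rho_1(\mu)(\intoo{N}{S})$ of $\mathcal U$, set $h|_J=\rho_1(\mu)\circ h_0\circ\bigl(\rho_2(\mu)|_{\intoo{N}{S}}\bigr)^{-1}$; this does not depend on the choice of $\mu$, because the ambiguity lies in $\langle\delta_1\rangle$ and $\rho_1(\delta_1)^k h_0=h_0 f^k=h_0\,\rho_2(\delta_1)^k$ for every $k\in\Z$. Together with $h=\mathrm{id}$ on $\Ss^1\setminus\mathcal U$, this defines a non-decreasing map of degree one (continuous, in fact) that restricts to the identity on $L$, since $h_0$ fixes $\partial\intoo{N}{S}\subseteq L$ and so the pieces fit together along $L$. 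The relation $\rho_1(\gamma)\circ h=h\circ\rho_2(\gamma)$ is then verified by cases: on $L$ and on components of $\Ss^1\setminus L$ not in $\mathcal U$ it reduces to $\rho_1(\gamma)=\rho_2(\gamma)$ there, and on $J=\rho_1(\mu)(\intoo{N}{S})$ it becomes, after writing $\rho_2(\gamma)(J)=\rho_1(\gamma\mu)(\intoo{N}{S})$, the tautology $\rho_1(\gamma\mu)\circ h_0\circ\rho_2(\mu)^{-1}=\rho_1(\gamma\mu)\circ h_0\circ\rho_2(\mu)^{-1}$. Hence $(\rho_2,\rho_1,h)$ is a semi-conjugate triple with $h$ the identity on $L$, and since semi-conjugacy is an equivalence relation, $\rho_1$ and $\rho_2$ are semi-conjugate by $h$.
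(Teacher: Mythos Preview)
Your proof is correct and follows the same scheme as the paper: build a local semi-conjugacy on $I_1=\intoo{N}{S}$, propagate it equivariantly over the $\mathbb F_3$-orbit of $I_1$, and take the identity elsewhere. The only substantive difference is the direction: you construct the collapsing map $h_0$ satisfying $\rho_1(\delta_1)\circ h_0=h_0\circ f$, obtaining a \emph{continuous} semi-conjugacy $(\rho_2,\rho_1,h)$, and then invoke symmetry of the relation; the paper instead writes the opening direction $(\rho_1,\rho_2,h)$ directly.

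One caveat: your aside that ``a semi-conjugacy in the other direction, from $\rho_1(\delta_1)$ to $f$ and still fixing $N$ and $S$, cannot exist'' is only true for continuous maps. In the paper's convention semi-conjugacies may be discontinuous, and then one can take $h$ on $\intoo{N}{S}$ to be a homeomorphism onto $\intoo{N}{P_a}$ conjugating the two free $\Z$-actions, with a jump at $S$ from $P_a$ to $S$; this is what the paper's terse ``set $h:I_1\to I_1$ to be a semi conjugacy'' amounts to, and is why the paper subsequently works with both $h$ and $h^*$. The remark is therefore inaccurate as stated, but it plays no role in your argument, which stands on its own.
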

\begin{proof} We start by setting $h$ to be the identity on $L$. Let $I_1=\intoo{N}{S}$ be the connected component of $\Ss^1\setminus L$ stabilised by $\delta_1$.\\
\indent Set $h:I_1\to I_1$ to be a semi conjugacy between $\rho_1(\delta_1)$ and $\rho_2(\delta_1)$, and extend $h$ on images $\rho_1(\gamma)(I_1)$ for $\gamma \in \mathbb F_3$ by $h_{/\rho_1(\gamma)(I_1)} = \rho_2(\gamma) \circ h_{/I_1} \circ \rho_1(\gamma^{-1})$.\\
\indent Finally, set $h$ to be the identity on the other connected components of $\Ss^1\setminus L$. It provides a semi conjugacy between $\rho_1$ and $\rho_2$ that is the identity on $L$.
\end{proof}

Given $h:\Ss^1\to \Ss^1$  non increasing of degree one such that $h$ is the identity on $L$ and such that $(\rho_1,\rho_2,h)$ is a semi conjugate triple, we wish to show that it is area preserving.\\
\indent Since semi conjugacy is an equivalence relation, we can also consider $h^*:\Ss^1\to \Ss^1$ non decreasing of degree one such that $\rho_1\circ h^* = h^*\circ \rho_2$. Since we can construct it in the same way as for $h$, we can assume that $h^*$ is the identity on $L$.

\begin{rem} Such  a construction is not possible if we start with a Fuchsian representation of the fundamental group of a compact surface, because the limit set is the whole circle. This implies that we have to work with a surface of finite type, i.e. a closed surface of genus $g\ge 0$ with $r$ omitted points, so that $2g+r > 2$ (to ensure the existence of a hyperbolic structure, and that the representation in $\PSL(2,\R)$ is non elementary). In the construction of $\rho_2$, we used two technical properties of $\rho_1$. First, it is important to work with a free group, so that a choice of images of the generators in $\Diff(\Ss^1)$ always corresponds to a representation of the fundamental group considered. This is always the case with a non compact surface of  finite type (i.e. $r\ge 1$). The second important condition is that we can choose a system of generators so that one of the generators represents a simple loop around one of the omitted points. This is not possible with a simple punctured torus (where the fundamental group is $\mathbb F_2=\langle a,b\rangle$ and the simple loop around the omitted point is the commutator $[a,b]=a^{-1}b^{-1}ab$). This is why we used the twice punctured torus, but the sphere with three omitted points (i.e. a pair of pants) could also have been used. More generally, this construction works for a sphere with three or more removed points, or a closed surface of genus $g\ge 2$ with two or more removed points.
\end{rem}

\subsection{The  flow associated to $(\rho_1,\rho_2)$}We consider the following three manifold: $$\Sigma_h=\{(a,b,c)\in (\Ss^1)^3 \vert a<b<z<a ~\forall z\in h^{-1}(\{c\})\}$$ \indent  The group $\mathbb F_3$ acts on $\Sigma_h$  by $\gamma.(a,b,c)=(\rho_1(\gamma)(a),\rho_1(\gamma)(b),\rho_2(\gamma)(c ))$.
\begin{prop} The action of $\mathbb F_3$ on $\Sigma_h$ is properly discontinuous.
\end{prop}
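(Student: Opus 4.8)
The plan is to reduce proper discontinuity of the $\mathbb F_3$-action on $\Sigma_h$ to proper discontinuity of the diagonal action of $\Gamma:=\rho_1(\mathbb F_3)$ on $\Sigma_3$, which is already in hand: $\PSL(2,\R)$ acts simply transitively on $\Sigma_3$, and $\Gamma$ is discrete (it is the holonomy of a hyperbolic structure on $T$, hence convex cocompact), so the diagonal action $\alpha$ of $\Gamma$ on $\Sigma_3$ is properly discontinuous --- this is exactly what makes $M=\Sigma_3/\alpha(\Gamma)$ a manifold in subsection~\ref{subsection:convex} --- and moreover $\rho_1$ is faithful. The bridge I would build is an $\mathbb F_3$-equivariant map $\Sigma_h\to\Sigma_3$ that is "proper enough".

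To construct it, for $c\in\Ss^1$ write $J_c=\{x\in\Ss^1\mid(x,c)\in G(h)\}$ for the fibre of the graph; since $h$ is non constant this is a proper closed sub-arc of $\Ss^1$ (non empty because $G(h)$ projects onto the vertical factor), it contains $h^{-1}(\{c\})$, and the condition defining $\Sigma_h$ reads $J_c\subset\intoo{b}{a}$. Let $\psi(c)$ be the initial endpoint of the arc $J_c$ in the positive cyclic orientation; $\psi$ is a version of the reverse semi conjugacy $h^*$. Because $G(h)$ is invariant under $(x,y)\mapsto(\rho_1(\gamma)(x),\rho_2(\gamma)(y))$ one gets $J_{\rho_2(\gamma)(c)}=\rho_1(\gamma)(J_c)$, and since $\rho_1(\gamma)$ is orientation preserving this upgrades to the exact equivariance $\psi\circ\rho_2(\gamma)=\rho_1(\gamma)\circ\psi$. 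Hence
$$\Phi(a,b,c):=(a,b,\psi(c))$$
takes values in $\Sigma_3$ (as $\psi(c)\in J_c\subset\intoo{b}{a}$ forces $a<b<\psi(c)<a$) and intertwines the $\mathbb F_3$-action on $\Sigma_h$ with the diagonal action of $\rho_1(\mathbb F_3)$ on $\Sigma_3$. Note that $\Phi$ need not be continuous, but this is irrelevant for the argument.

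The heart of the matter is to show that $\overline{\Phi(K)}$ is a compact subset of $\Sigma_3$ whenever $K\subset\Sigma_h$ is compact. Granting this, the rest is standard: if $\gamma K\cap K\neq\emptyset$, picking $(a,b,c)\in K$ with $\gamma\cdot(a,b,c)\in K$ gives $\rho_1(\gamma)\cdot\Phi(a,b,c)=\Phi(\gamma\cdot(a,b,c))\in\Phi(K)$ together with $\Phi(a,b,c)\in\Phi(K)$, so $\rho_1(\gamma)\overline{\Phi(K)}\cap\overline{\Phi(K)}\neq\emptyset$; proper discontinuity of $\alpha(\Gamma)$ on $\Sigma_3$ leaves only finitely many such elements of $\Gamma$, and faithfulness of $\rho_1$ transports finiteness back to $\mathbb F_3$. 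To obtain that $\overline{\Phi(K)}$ is compact in $\Sigma_3$ it is enough to bound below, uniformly on $K$, the lengths of the three consecutive arcs determined by $(a,b,\psi(c))$: the arc $\intoo{a}{b}$ is long because $(a,b,c)\mapsto(a,b)$ maps $K$ into a compact subset of $\Ss^1\times\Ss^1\setminus\Delta$, and the arcs $\intoo{b}{\psi(c)}$ and $\intoo{\psi(c)}{a}$ are long as soon as $\inf_{(a,b,c)\in K}d(J_c,\{a,b\})>0$.

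This last uniform estimate is the step I expect to be the real obstacle, and it is where the hypotheses genuinely enter: the map $c\mapsto J_c$ is not continuous (it jumps across the intervals collapsed by $h$ and over the intervals it "misses"), so there is no naive compactness argument. The way around it is to use that $G(h)$ is a \emph{closed} subset of $\Ss^1\times\Ss^1$: if the estimate failed there would be $(a_n,b_n,c_n)\in K$, $z_n\in J_{c_n}$ and $w_n\in\{a_n,b_n\}$ with $d(z_n,w_n)\to0$; after extracting, $(a_n,b_n,c_n)\to(a,b,c)\in\Sigma_h$, $z_n\to z$, $w_n\to w\in\{a,b\}$ with $z=w$, and closedness of $G(h)$ gives $(z,c)\in G(h)$, i.e. $z\in J_c$, contradicting $J_c\cap\{a,b\}=\emptyset$ (which holds since $(a,b,c)\in\Sigma_h$). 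Everything else is routine bookkeeping.
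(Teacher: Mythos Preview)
Your proof is correct and takes a genuinely different route from the paper's. The paper argues directly by contradiction: assuming $\gamma_k\to\infty$ with $(a_k,b_k,c_k)\to(a,b,c)\in\Sigma_h$ and $\gamma_k\cdot(a_k,b_k,c_k)\to(u,v,w)\in\Sigma_h$, it invokes the convergence property of $\rho_1(\mathbb F_3)\subset\PSL(2,\R)$ to pin down the repeller, then splits into cases according to whether that repeller bounds a gap in the orbit of $I_1$; in each case it traps $c_k$ in an interval bounded by points of $L$ (where $\rho_1=\rho_2$) and forces $w=v$, contradicting $(u,v,w)\in\Sigma_h$. Your approach is more structural: you build an equivariant (discontinuous) map $\Phi:\Sigma_h\to\Sigma_3$ and show it sends compacta to relatively compact sets, so proper discontinuity descends from that of the discrete group $\rho_1(\mathbb F_3)$ on $\Sigma_3\cong\mathrm T^1\h^2$. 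This is arguably cleaner, and it anticipates the paper's own later use of $\tilde H^*(x,y,z)=(x,y,h^*(z))$---your $\psi$ is just a particular choice of reverse semi conjugacy $h^*$. What the paper's hands-on argument buys is that it never needs the ``$\overline{\Phi(K)}$ compact'' estimate, trading it for explicit use of the north/south dynamics.

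One caveat worth flagging: you assert that ``the condition defining $\Sigma_h$ reads $J_c\subset\intoo{b}{a}$'', but the paper literally writes the condition with $h^{-1}(\{c\})$ rather than $J_c$. These differ precisely when $c$ lies in a jump of $h$, and the $h$ built here is necessarily non surjective on $I_1$ (any non constant semi conjugacy from the two-fixed-point map $\rho_1(\delta_1)$ to the three-fixed-point map $\rho_2(\delta_1)$ must miss the parabolic point $P_a$, since $h^{-1}(\{P_a\})$ is a $\rho_1(\delta_1)$-invariant interval in $I_1$, hence empty). Under the literal reading, $\Sigma_h$ would contain triples with $a=b$ whenever $h^{-1}(\{c\})=\emptyset$, which is clearly unintended and would also break the paper's own map $\tilde H^*$. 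So your reinterpretation via $G(h)$ is the correct one; it is a clarification of the definition rather than a gap in your argument.
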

\begin{proof} Assume that $\gamma_k\to \infty$ and that there is a sequence $(a_k,b_k,c_k)\to (a,b,c)\in \Sigma_h$ such that $\gamma_k.(a_k,b_k,c_k)\to (u,v,w)\in \Sigma_h$. Up to a subsequence and up to replacing $\gamma_k$ with $\gamma_k^{-1}$, we can assume that $\rho_1(\gamma_k)(x)\to v$ for all $x\ne u$. \\
\indent Assume that $a$ does not bound an interval where $\rho_1\ne \rho_2$. In that case, one can find a compact interval $K\subset \Ss^1$ bounded by points of $L$ such that $a\notin K$ and $c\in \mathring K$. The sequence of intervals $\rho_1(\gamma_k)(K)$ collapses to the point $\{v\}$. Since $\rho_2(\gamma_k)(K)=\rho_1(\gamma_k)(K)$ and $c_k\in K$, we see that  $\rho_2(\gamma_k)(c_k)\to v$, hence $w=v=h(v)$, which is absurd because $(u,v,w)\in \Sigma_h$.\\
\indent We now know that $u=N$ where $I=\intoo{N}{S}$ is an interval bounded by fixed points for a conjugate $\delta$ of $\delta_1$, such that the third fixed point $P$ is in $I$. If $c_k$ were not in $I$ for $k$ large enough, then we could still find a compact interval $K$ as above, which is impossible, hence $c_k\in I$ for $k$ large enough. This implies that $\gamma_k$ stabilizes $I$, which gives us $v=S$ and $w=P$. This is also impossible because $(N,S,P)\notin \Sigma_h$ by construction of $h$.
 \end{proof}
 We can now consider the quotient manifold $N_h=\Sigma_h/\mathbb F_3$.

\subsection{Invariant volume on the hyperbolic set} The projection on $N_h$ of the constant vector field $(0,1,0)$ on $\Sigma_h$  can be reparametrised into a smooth flow $\psi^t$. Consider the map $\tilde H^* : \Sigma_h \to \Sigma_3$ defined by $\tilde H^*(x,y,z) = (x,y,h^*(z))$. It induces a map $H^*:N_h\to M=\rm{T}^1\h^2/\rho_1(\mathbb F_3)$.  Its restriction to $\Omega_\psi$ is a diffeomorphism onto $\Omega_\p$ that sends $\psi^t$ to a reparametrisation of $\p^t$. From this we deduce that $\Omega_{\psi}$ is a compact hyperbolic set for $\psi^t$. If the image $x\in N_h$ of $(x_-,x_0,x_+)\in \Sigma_h$ is in $\Omega_{\psi}$, then the stable (resp. unstable) manifold of $x$ is the set of images of points $(y_-,y_0,y_+)$ such that $y_+=x_+$ (resp. $y_-=x_-$).\\
\indent We will use this flow in order to extend the volume form to the stable and unstable manifolds of the non wandering set.
%\indent As we mentioned earlier, the classical result for solving cohomological equation for hyperbolic flows is Livšic's Theorem. However, it only provides solutions on the hyperbolic set, and we already have an invariant volume on $\Omega_{\psi}$ (because the flow $\psi^t$ and the geodesic flow $\p^t$ are differentially conjugate on their non wandering sets). The hyperbolicity gives us an extension to $W^s(\Omega_{\psi})\cup W^u(\Omega_{\psi})$, which consists of projections of points $(x_-,x_0,x_+)\in \Sigma_h$ such that $x_-\in L$ or $x_+\in L$.

\begin{lemma} There is a continuous volume form $\omega_1$ on $N_h$ that is invariant under $\psi^t$ at points of $W^s(\Omega_{\psi})\cup W^u(\Omega_{\psi})$. \end{lemma}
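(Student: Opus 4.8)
The plan is to translate the problem into a cohomological equation for the flow $\psi^t$ and to solve it along $W^s(\Omega_\psi)\cup W^u(\Omega_\psi)$ by extending a solution that is essentially free on $\Omega_\psi$ itself. First I would fix a smooth volume form $\omega_0$ on $N_h$ and let $A_t(x)=\int_0^t\alpha(\psi^s x)\,ds$ be the additive cocycle attached to $\psi^t$ and $\omega_0$ as in the cohomological reformulation above, so that $e^{\sigma}\omega_0$ is invariant under $\psi^t$ at a point $x$ exactly when $\sigma(\psi^t x)-\sigma(x)=A_t(x)$ for all $t$. Hence it suffices to produce a continuous function $\sigma$ on $W^s(\Omega_\psi)\cup W^u(\Omega_\psi)$ satisfying this identity there: this set is closed in $N_h$ (it is the image of $\{(x_-,x_0,x_+)\in\Sigma_h \mid x_-\in L\ \text{or}\ x_+\in L\}$), so Tietze's theorem then extends $\sigma$ to a continuous function on $N_h$, and $\omega_1=e^{\sigma}\omega_0$ is the volume form we want.

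On $\Omega_\psi$ a solution $\sigma_0$ comes for free: $H^*$ identifies $(\Omega_\psi,\psi^t)$ with $(\Omega_\p,\p^t)$ up to a smooth time change, the geodesic flow preserves the Liouville volume, and a time change of a volume-preserving flow still preserves a volume form, whose data transports to $\sigma_0$. (Equivalently, $\Omega_\psi$ is a compact hyperbolic set carrying a dense orbit and $A_T(x)=\log|\det D\psi^T_x|$ along a periodic orbit of period $T$; the Poincaré return map of the geodesic flow is area preserving on $E^s\oplus E^u$ and a time change leaves this return map unchanged, so $A_T(x)=0$ and Liv\v{s}ic's theorem yields $\sigma_0$.) In any case $\sigma_0$ is continuous, hence uniformly continuous, on the compact set $\Omega_\psi$, and $\sigma_0(\psi^t x)-\sigma_0(x)=A_t(x)$ for $x\in\Omega_\psi$.

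The heart of the proof is extending $\sigma_0$ through the stable manifolds (the unstable case being symmetric). Given $x\in W^s(\Omega_\psi)$, I would write $x$ as the class of $(x_-,x_0,x_+)$ with $x_+\in L$ and, using that $L$ is totally disconnected, complete $x_+$ to a triple whose class $\overline{x}$ lies in $\Omega_\psi$ and has the same forward endpoint, so that $x\in W^s(\overline{x})$, with $\overline{x}$ depending locally continuously (in fact locally constantly in the two auxiliary coordinates) on $x$. Then I would set
\[\sigma(x)=\sigma_0(\overline{x})+\int_0^{+\infty}\bigl(\alpha(\psi^s\overline{x})-\alpha(\psi^s x)\bigr)\,ds.\]
Since the forward orbit of $x$ stays in a compact set $A$ (it converges to $\Omega_\psi$), the uniform exponential contraction on stable manifolds from subsection \ref{subsec:hyperbolic_flows} gives $d(\psi^s x,\psi^s\overline{x})\le C'e^{-(\lambda-\e)s}$; as $\alpha$ is Lipschitz the integrand is $O(e^{-(\lambda-\e)s})$, so the integral converges. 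It then remains to check: that $\sigma=\sigma_0$ on $\Omega_\psi$; that $\sigma(\psi^r x)-\sigma(x)=A_r(x)$, by splitting the integral at $r$ and using $\overline{\psi^r x}=\psi^r\overline{x}$ together with $\sigma_0(\psi^r\overline{x})-\sigma_0(\overline{x})=A_r(\overline{x})$; that $\sigma(x)$ is independent of the completion, since two admissible choices $\overline{x},\overline{y}$ lie on a common stable leaf of $\Omega_\psi$ and letting $t\to+\infty$ in $\sigma_0(\psi^t\overline{x})-\sigma_0(\psi^t\overline{y})=(\sigma_0(\overline{x})-\sigma_0(\overline{y}))+(A_t(\overline{x})-A_t(\overline{y}))$, whose left side tends to $0$ by uniform continuity of $\sigma_0$ and $d(\psi^t\overline{x},\psi^t\overline{y})\to0$, gives exactly the needed cancellation; and finally that $\sigma$ is continuous, which follows from the local continuous choice of $\overline{x}$ and dominated convergence applied to the integral with the uniform exponential bound on $A$. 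The symmetric construction with $t\to-\infty$ gives a continuous solution on $W^u(\Omega_\psi)$, and the two agree on $W^s(\Omega_\psi)\cap W^u(\Omega_\psi)=\Omega_\psi$ (a triple with both endpoints in $L$ represents a point of $\Omega_\psi$), so they glue to the required continuous $\sigma$.

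The main obstacle I anticipate is precisely this last paragraph: forcing the defining integral to converge, to be independent of the auxiliary completion $\overline{x}$, and to depend continuously on $x$ — all of which rest on the uniform exponential estimate for Axiom A flows recalled in subsection \ref{subsec:hyperbolic_flows}, on the Lipschitz regularity of $\alpha$, and on the total disconnectedness of $L$ that makes the completions $\overline{x}$ choosable continuously. Obtaining $\sigma_0$ on $\Omega_\psi$ and extending from the closed set $W^s(\Omega_\psi)\cup W^u(\Omega_\psi)$ to all of $N_h$ are, by comparison, routine.
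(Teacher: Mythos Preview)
Your argument is correct and follows the same core idea as the paper: reduce to a cohomological equation for $\psi^t$ and solve it on $W^s(\Omega_\psi)\cup W^u(\Omega_\psi)$ by integrating the infinitesimal cocycle $\alpha$ along forward (resp.\ backward) orbits, using the uniform exponential contraction on stable (resp.\ unstable) manifolds and the Lipschitz regularity of $\alpha$ to get convergence and continuity.

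The paper, however, short-circuits most of your work by a better choice of starting form. Since $H^*$ is a diffeomorphism from $\Omega_\psi$ onto $\Omega_\p$ sending $\psi^t$ to a time change of the geodesic flow, one can pick a smooth $\omega_0$ on $N_h$ that is \emph{already} $\psi^t$-invariant at every point of $\Omega_\psi$; then $\alpha\equiv 0$ on $\Omega_\psi$, so $\sigma_0\equiv 0$ and the extension collapses to the single formula $\sigma(x)=-\int_0^{\infty}\alpha(\psi^t x)\,dt$ on $W^s(\Omega_\psi)$ (and the analogous negative-time integral on $W^u(\Omega_\psi)$). This eliminates your appeal to Liv\v{s}ic, the auxiliary completion $\overline{x}$, the independence-of-choice verification, and the gluing on $\Omega_\psi$. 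Your route is more general (it would work for an arbitrary smooth $\omega_0$), but here that generality is not needed and costs you several nontrivial checks.
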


\begin{proof}  The differentiable conjugacy on the non wandering set  implies that there is a smooth volume form $\omega_0$ on $N_h$ that is preserved by the flow at points of the non wandering set. Hence, if $\psi^{t*}\omega_0=e^{-A(t,x)}\omega_0$ and $\alpha(x)=\frac{\partial A}{\partial t}(0,x)$, then $\alpha =0$ on $\Omega_{\psi}$. We will now construct a smooth function $\sigma$ on $N_h$ such that $\sigma(\psi^t(x))-\sigma(x)=\int_0^t\alpha(\psi^s(x))ds$ for all $x\in W^s(\Omega_{\psi})\cup W^u(\Omega_{\psi})$, so that $\omega_1=e^\sigma \omega_0$ meets our requirements.\\ %By going back to $\mathcal C$, we will have a volume form that is preserved at points of $L_{\Gamma}\times \Ss^1\cup \Ss^1\times L_{\Gamma}$.\\
\indent If $x\in W^s(z)$ with $z\in \Omega_{\psi}$, and if we have found such a function $\sigma$, then $\sigma(\psi^t(x))\approx \sigma(\psi^t(z))=0$ for $t$ large enough, hence $\sigma(x)=-\int_0^{\infty}\alpha(\psi^t(x))dt$. We will use this formula as a definition of $\sigma$. If it is well defined, then it satisfies the cohomology equation. \\
\indent Let $C>0$ be such that $d(\psi^t(x),\psi^t(z))\le Ce^{-t}$ (locally $C$ can be chosen independently from $x$ and $z$). Let $k$ be a Lipschitz constant for $\alpha$ in a neighbourhood $U$ of $\Omega_{\psi}$. For $t$ such that $\psi^t(x)\in U$ (which is locally uniform in $x$), we have: $$\vert \alpha(\psi^t(x))\vert \le \underbrace{\vert \alpha(\psi^t(z))\vert}_{=0} + k\underbrace{d(\psi^t(x),\psi^t(z))}_{\le Ce^{-t}}$$
\indent This gives us uniform convergence, hence $\sigma$ is well defined and continuous. By applying the same reasoning with negative times, we define $\sigma$ on $W^u(\Omega_{\psi})$.
\end{proof}

\subsection{Going back from the flow to $M_h$} Now that we have found an invariant volume form on a larger set for the  flow $\psi^t$, we need to translate it in terms of the action on $M_h$.
\begin{lemma} If there is a continuous volume form $v$ on $N_h$ preserved by $\psi^t$ at points of $W^s(\Omega_{\psi})\cup W^u(\Omega_{\psi})$, then there is a continuous volume form $\omega$ on $M_h$ preserved by $(\rho_1,\rho_2)$  at points of $L\times \Ss^1\cup \Ss^1\times L$. \end{lemma}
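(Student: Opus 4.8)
The plan is to realize $M_h$ as the orbit space of the flow $\psi^t$ on $\Sigma_h$, transport $v$ to it by contracting with the flow direction, and then extend the resulting form from the invariant locus to all of $M_h$. First I would pull $v$ back along the $\mathbb F_3$-covering $\Sigma_h\to N_h$ to a continuous volume form $\tilde v$ on $\Sigma_h$, which is $\mathbb F_3$-invariant and is $\psi^t$-invariant at every point lying over $W^s(\Omega_\psi)\cup W^u(\Omega_\psi)$. Let $X$ be the nowhere vanishing, $\mathbb F_3$-invariant vector field on $\Sigma_h$ generating the lift of $\psi^t$; it is a positive multiple of the constant field $(0,1,0)$, so the orbits of $X$ in $\Sigma_h$ are exactly the fibres of the projection $\pi\colon\Sigma_h\to M_h$, $(a,b,c)\mapsto(a,c)$, which intertwines the $\mathbb F_3$-action with the action $(\rho_1,\rho_2)$ on $M_h$ and satisfies $\ker d\pi=\R X$. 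Then I would set $\beta=\iota_X\tilde v$: a continuous $2$-form on $\Sigma_h$ that is $\mathbb F_3$-invariant, annihilates $X$, and --- since $\tilde v$ is a volume form --- is nondegenerate transversally to $X$; hence at each $p\in\Sigma_h$ it descends to a nondegenerate $2$-form $\bar\beta_p$ on $T_{\pi(p)}M_h$.

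Next I would control $\bar\beta_p$ along the fibres of $\pi$. Set $C=\bigl(L\times\Ss^1\cup\Ss^1\times L\bigr)\cap M_h$, a closed subset of $M_h$. The key observation is $\pi^{-1}(C)\subset W^s(\Omega_\psi)\cup W^u(\Omega_\psi)$: a point $(a,b,c)\in\Sigma_h$ lies in $W^u(\Omega_\psi)$ as soon as $a\in L$, and in $W^s(\Omega_\psi)$ as soon as $h^*(c)\in L$ --- in particular when $c\in L$, since $h^*$ is the identity on $L$. Hence $(\psi^t)^*\tilde v=\tilde v$ on $\pi^{-1}(C)$, and as $\psi^t$ preserves $X$ this gives $(\psi^t)^*\beta=\beta$ there, i.e.\ $\bar\beta_p$ is constant along each orbit inside $\pi^{-1}(C)$. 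Using flow boxes --- a nonvanishing vector field is locally a translation, so near any point $\Sigma_h\cong\intoo{-\e}{\e}\times U$ with $U$ open in $M_h$ and $\pi$ the second projection --- this constancy shows that the forms $\bar\beta_p$, $p\in\pi^{-1}(C)$, patch together into a continuous, everywhere nondegenerate $2$-form $\omega_0$ over $C$; and the $\mathbb F_3$-invariance of $\beta$, together with $\pi\circ\gamma=(\rho_1(\gamma),\rho_2(\gamma))\circ\pi$ and $\gamma_*X=X$, shows that $\omega_0$ is preserved by $(\rho_1,\rho_2)$ at every point of $C$.

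It remains to extend $\omega_0$ to a continuous volume form on all of $M_h$, with no further invariance required. Orienting $M_h$ so that $\omega_0$ is positive and writing $\omega_0=e^{u_0}\,dx\wedge dy$ over $C$ with $u_0\colon C\to\R$ continuous, Tietze's theorem yields a continuous extension $u\colon M_h\to\R$ of $u_0$, and $\omega=e^{u}\,dx\wedge dy$ is then a continuous volume form on $M_h$ preserved by $(\rho_1,\rho_2)$ on $L\times\Ss^1\cup\Ss^1\times L$. I expect the middle step to be the main obstacle: one has to check with care that $\pi$ really presents $M_h$ as the orbit space of $\psi^t$ and that $\pi^{-1}(C)$ lies in the stable/unstable set, and --- the one genuinely technical point --- that the pointwise forms $\bar\beta_p$ assemble into an honestly continuous $2$-form over $C$, which is exactly where the flow-box normal form is used. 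The other ingredients (the contraction identity, nondegeneracy of $\beta$, equivariance, and the Tietze extension) are routine.
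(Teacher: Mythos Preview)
Your argument is correct and follows the same overall strategy as the paper: lift $v$ to an $\mathbb F_3$-invariant $3$-form $\tilde v$ on $\Sigma_h$, use the flow invariance on $\pi^{-1}(C)$ to produce a $2$-form on $C\subset M_h$, and extend. The technical implementation differs slightly. The paper writes $\tilde\omega_1=\tilde\omega_1(x_-,x_0,x_+)\,dx_-\wedge dx_0\wedge dx_+$, observes that flow invariance forces the density to be independent of $x_0$ when $x_-\in L$ or $x_+\in L$, and then chooses a continuous section $i_0:M_h\to\Ss^1$ of $\pi$ to define $\omega_2(x_-,x_+)=\tilde\omega_1(x_-,i_0(x_-,x_+),x_+)$ on all of $M_h$ at once; continuity is then immediate and one checks invariance on $C$ by a direct Jacobian computation. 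Your route via $\beta=\iota_X\tilde v$ and descent is the coordinate-free version of the same idea, and your Tietze extension of the log-density plays the role of the section. Each choice has a small advantage: the section gives continuity on $M_h$ for free, while contracting with the actual generator $X$ (rather than the constant field $\partial_{x_0}$) makes the descent argument insensitive to the reparametrisation of the flow. One small point you gloss over is why $\omega_0$ has a fixed sign on the (highly disconnected) set $C$ so that $u_0=\log(\omega_0/dx\wedge dy)$ is well defined; this follows because $\Sigma_h$ is connected (it fibres over the connected $M_h$ with interval fibres), so the density of $\tilde v$, and hence of $\beta$, has constant sign.
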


\begin{proof}  Let $\omega_1=e^{\sigma}\omega_0$ be a continuous volume form on $N_h$ that is invariant at points of $W^s(\Omega_{\psi})\cup W^u(\Omega_{\psi})$. Let $\tilde \omega_1$ be its lift to $\Sigma_h$ and write: $$\tilde \omega_1=\tilde \omega_1(x_-,x_0,x_+)dx_-\wedge dx_0\wedge dx_+$$ \indent  If $x_-$ or $x_+$ is in $L$, then the image in $N_h$ is in $W^s(\Omega_{\psi})\cup W^u(\Omega_{\psi})$, and the invariance under the  flow $\psi^t$ gives us $\tilde \omega_1(x_-,x_0,x_+) = \tilde \omega_1(x_-,x'_0,x_+)$ for all $x'_0$ such that $(x_-,x'_0,x_+)\in \Sigma_h$. \\
\indent Choose a continuous map $i_0:M_h\to \Ss^1$ such that $(x_-,i_0(x_-,x_+),x_+)\in \Sigma_h$ for all $(x_-,x_+)\in M_h$, and let $\omega_2(x_-,x_+)=\tilde \omega_1(x_-,i_0(x_-,x_+),x_+)$ for $(x_-,x_+)\in M_h$. If $x_-$ or $x_+$ is in $L$ and $\gamma\in \mathbb F_3$, then the invariance under $\psi^t$ gives us: 
\begin{eqnarray*} &~& \omega_2(\rho_1(\gamma)(x_-),\rho_2(\gamma)(x_+)) \rho_1(\gamma)'(x_-)\rho_2(\gamma)'(x_+) \\&=& \tilde \omega_1 ( \rho_1(\gamma)(x_-), i_0(\rho_1(\gamma)(x_-),\rho_2(\gamma)(x_+)),\rho_2(\gamma)(x_+)) \rho_1(\gamma)'(x_-)\rho_2(\gamma)'(x_+)\\  &=& \tilde \omega_1 ( \rho_1(\gamma)(x_-), \rho_1(\gamma)(i_0(x_-,x_+)),\rho_2(\gamma)(x_+)) \rho_1(\gamma)'(x_-)\rho_2(\gamma)'(x_+)\\ &=& \tilde \omega_1(x_-,i_0(x_-,x_+),x_+) \\&=& \omega_2(x_-,x_+) \end{eqnarray*}
\indent We have defined a continuous volume form $\omega_2$ on $M_h$ that is $(\rho_1,\rho_2)$-invariant at points of $(L\times \Ss^1\cup \Ss^1\times L)\cap M_h$.
\end{proof}

\subsection{Horizontal strips}The first step in extending $\omega$ to all of $M_h$ is to extend it to horizontal strips delimited by elements of $L$, so that we only need to deal with invariance under one element of the group.

\begin{lemma} \label{vertical} Let $I$ be a connected component of $\Ss^1\setminus L$, and let $\gamma\in \mathbb F_3$ be a generator of its stabilizer. There is a continuous volume form $\omega$ on $\Ss^1\times \overline I \setminus G(h)$ that is invariant by $(\rho_1(\gamma),\rho_2(\gamma))$ and that is equal to $\omega_2$ on $L\times \Ss^1\cup \Ss^1\times L$. \end{lemma}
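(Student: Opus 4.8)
The plan is to follow the local constructions of Section~\ref{sec:elements}: use the hyperbolic dynamics of the single map $\Phi=(\rho_1(\gamma),\rho_2(\gamma))$ to decompose the strip $S:=\Ss^1\times\overline I\setminus G(h)$ into $\Phi$-invariant pieces whose $\langle\Phi\rangle$-quotients are cylinders, extend $\omega_2$ over each quotient, and then control continuity of the glued-together form. Since $\gamma$ generates the stabiliser of the component $I$ of $\Ss^1\setminus L$, it is conjugate in $\mathbb F_3$ to a primitive peripheral element, so $\rho_1(\gamma)\in\PSL(2,\R)$ is hyperbolic and its two fixed points are exactly the endpoints $p,q$ of $I$, which lie in $L$; on $\overline I$ it has an attracting and a repelling endpoint and no interior fixed point. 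The map $\rho_2(\gamma)$ also stabilises $\overline I$, fixes $p$ and $q$, agrees with $\rho_1(\gamma)$ near them, and possibly has finitely many extra (parabolic) fixed points in the interior of $I$; the set $G(h)\cap(\overline I\times\overline I)$ is the $\Phi$-invariant monotone staircase built from the graph of $h|_I$ and the vertical segments over its discontinuities. The curves $\{p\}\times\overline I$, $\{q\}\times\overline I$, $\Ss^1\times\{p\}$, $\Ss^1\times\{q\}$, and $\Ss^1\times\{P\}$ for each interior fixed point $P$ of $\rho_2(\gamma)|_{\overline I}$, are $\Phi$-invariant and (apart from $G(h)$) all lie inside $L\times\Ss^1\cup\Ss^1\times L$, so I simply set $\omega=\omega_2$ on them.

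Removing these curves and $G(h)$ splits $S$ into the inner region $I\times\overline I\setminus G(h)$ (cut by the staircase, and by the lines $\Ss^1\times\{P\}$, into a finite number of $\Phi$-invariant pieces) and the outer region $(\Ss^1\setminus\overline I)\times\overline I$. On each inner piece and on the outer region, $\rho_1(\gamma)$ acts on the first coordinate without interior fixed point, so $\langle\Phi\rangle$ acts freely and properly discontinuously; the quotient is an (open) cylinder. On the quotient of the outer region, $\omega_2$ is already defined along the image of $(L\times\overline I\cup\Ss^1\times\partial I)\cap S$ — which includes the images of the walls $\partial J\times\overline I$ of every vertical band $J\times\overline I$ ($J$ a component of $\Ss^1\setminus L$ with $J\ne I$) and the two boundary circles of the cylinder — and for $J\ne I$ one has $h(J)\subseteq\overline J$, disjoint from the interior of $I$, so $G(h)$ does not meet the interior of such a band. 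I extend $\omega_2$ band by band (one per $\langle\gamma\rangle$-orbit) to a continuous volume form on the quotient, and on the inner pieces I extend the given boundary data over the cylinder likewise; pulling back along the quotient maps and combining gives a continuous volume form $\omega$ on $S$ which by construction is $(\rho_1(\gamma),\rho_2(\gamma))$-invariant and equal to $\omega_2$ on the prescribed set. All of these cylinder extensions are exactly of the type carried out repeatedly in the proofs of Section~\ref{sec:elements}.

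The real difficulty, and the one step I expect to require care, is continuity of $\omega$ across the curves where the pieces meet: along $\{p\}\times\overline I$ and $\{q\}\times\overline I$, onto which the bands $\gamma^k(J)\times\overline I$ accumulate as $k\to\pm\infty$; at the corners near $\partial I\times\partial I$ where the inner pieces pinch; and (when $\rho_2(\gamma)|_{\overline I}$ has parabolic points) along $\Ss^1\times\{P\}$. The device I would use, following Section~\ref{sec:elements}, is to perform every band extension on a set slightly larger than strictly needed, so that $\omega$ agrees with $\omega_2$ on a full neighbourhood of $L\times\Ss^1\cup\Ss^1\times L$ inside the strip; uniform continuity of $\omega_2$ on the compact set $\Ss^1\times\overline I$ then forces the transported bands $\gamma^k(J)\times\overline I$ to converge uniformly to $\omega_2$ as they shrink towards $p$ and $q$, which is what makes $\omega$ continuous and gives the required agreement with $\omega_2$ on the pinned set. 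Once this is arranged, the invariance of $\omega$ under $(\rho_1(\gamma),\rho_2(\gamma))$ is immediate from the way it was built.
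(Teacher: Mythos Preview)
Your overall architecture---define $\omega$ on a fundamental region for $\langle\Phi\rangle$ and propagate by equivariance---matches the paper's, but the continuity argument at the walls $\{p\}\times\overline I$ and $\{q\}\times\overline I$ has a real gap, and it is exactly the step the paper works hardest on.

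Your device is to ``perform every band extension on a set slightly larger than strictly needed, so that $\omega$ agrees with $\omega_2$ on a full neighbourhood of $L\times\Ss^1\cup\Ss^1\times L$''. This cannot be arranged. Any neighbourhood of $\{p\}\times\overline I$ contains infinitely many bands $\rho_1(\gamma)^k(J)\times\overline I$, and on those bands $\omega$ is \emph{determined} by the equivariance relation from the single choice you made on the quotient cylinder; you are not free to reset it to $\omega_2$ there. Since $\omega_2$ itself is only $\Phi$-invariant \emph{at} points of $L\times\Ss^1\cup\Ss^1\times L$ and not on a neighbourhood, the propagated form has no a~priori reason to approach $\omega_2$ as the bands shrink onto $\{p\}\times\overline I$. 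Uniform continuity of $\omega_2$ is irrelevant: what you need to control is the Jacobian factor $\rho_1(\gamma^{n_k})'(x_k)\,\rho_2(\gamma^{n_k})'(y_k)$ with $n_k\to\infty$, and nothing in your argument does this. (A smaller slip: the horizontal line $\Ss^1\times\{P\}$ through an interior fixed point $P\in I$ of $\rho_2(\gamma)$ is \emph{not} contained in $L\times\Ss^1\cup\Ss^1\times L$, since $P\notin L$.)

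The paper supplies exactly the missing comparison tool. From Proposition~\ref{elementary_non_convergence} (or the $\delta_2$ analogue) there exists a continuous volume form $\omega_\gamma$ on $M_h$ that is globally $(\rho_1(\gamma),\rho_2(\gamma))$-invariant. One sets $\omega=\omega_2$ on a fundamental strip $\intfo{a}{\rho_1(\gamma)(a)}\times\overline I$ with $a\in L$, and extends by equivariance. For $(x_k,y_k)\to(x,y)$ with $x\in\partial I$, the invariance of $\omega_\gamma$ kills the Jacobian: writing $(u_k,v_k)=(\rho_1(\gamma^{n_k})(x_k),\rho_2(\gamma^{n_k})(y_k))$ in the fundamental strip, one gets
\[
\omega(x_k,y_k)=\frac{\omega_2(u_k,v_k)}{\omega_\gamma(u_k,v_k)}\,\omega_\gamma(x_k,y_k),
\]
and now $v_k$ tends to the other endpoint $v\in\partial I\subset L$, so both $\omega_2(\cdot,v)$ and $\omega_\gamma(\cdot,v)$ are continuous $\rho_1(\gamma)$-invariant densities on the line $\Ss^1\times\{v\}$; by uniqueness of such densities up to a constant, the ratio tends to some $\lambda>0$, whence $\omega(x_k,y_k)\to\lambda\,\omega_\gamma(x,y)=\omega_2(x,y)$. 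Finally one sets $\omega=\lambda\,\omega_\gamma$ on the inner square $\overline I\times\overline I\setminus G(h)$, which in one stroke handles the region you proposed to treat piece by piece. The auxiliary form $\omega_\gamma$ is the idea your argument is missing.
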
 
 
%\indent  Let $I$ be a connected component of $\Ss^1\setminus L_{\gamma}$. Since $\rho_0$ is Fuchsian, we know that the stabilizer of $I$ in $\Gamma$ is non trivial, hence it is generated by an element $\gamma\in \Gamma$. 

\begin{proof} If $\gamma$ is conjugate to $\delta_1$, then Proposition \ref{elementary_non_convergence}, states that there is  a continuous volume form $\omega_{\gamma}$ on $M_h$ that is invariant under $(\rho_1(\gamma),\rho_2(\gamma))$. If $\gamma$ is conjugate to $\delta_2$, then Proposition 1.7 of \cite{Monclaira} gives the same result (the proof is almost identical to Proposition \ref{elementary_non_convergence}).\\ %We will now construct a smooth volume form $\omega$ on the vertical strip $I\times \Ss^1 \setminus \Delta$ that is invariant under $\rho_1(\gamma)$, and such that $\omega$ and $\omega_2$ coincide on $L_{\Gamma}\times \Ss^1\cup \Ss^1\times L_{\Gamma}$. \\
\indent Let $a\in L\setminus \overline I$. The interval $\intfo{a}{\rho_1(\gamma)(a)}$ is a fondamental domain for the action of $\rho_1(\gamma)$ on $\Ss^1 \setminus \overline I$, i.e. for every $y\in \Ss^1 \setminus \overline I$ there is a unique $n_y\in \Z$ such that $\rho_1(\gamma^{n_y})(y)\in \intfo{a}{\rho_1(\gamma)(a)}$. We set $\omega=\omega_2$ on $\intfo{a}{\rho_1(\gamma)(a)}\times \overline I$ and extend $\omega$ to  $  (\Ss^1\setminus \overline I)\times \overline I$ by using the equivariance formula: $$\frac{\omega(x,y)}{\omega_2(\rho_1(\gamma^{n_y})(x),\rho_2(\gamma^{n_y})(y))}= \rho_1(\gamma^{n_y})'(x) \rho_2(\gamma^{n_y})'(y)$$
\indent We have to show that $\omega$ is continuous. First, remark that it is continuous at every point of  $ \intfo{a}{\rho_1(\gamma)(a)}\times \overline I$: if $(x_n,y_n)\to (a,y)$ with $\rho_1(\gamma)(x_n)\in \intfo{a}{\rho_1(\gamma)(a)}$, then using the fact that  $a\in L$ and because the  volume $\omega_2$ is preserved at $(a,y)$, we get: \begin{eqnarray*} \omega(x_n,y_n)&=&\omega_2(\rho_1(\gamma)(x_n),\rho_1(\gamma)(y_n))\rho_1(\gamma)'(x_n)\rho_1(\gamma)'(y_n)\\ &\to & \omega_2(\rho_1(\gamma)(a),\rho_1(\gamma)(y))\rho_1(\gamma)'(a)\rho_1(\gamma)'(y)\\ &  ~& ~~~~~~=\omega_2(a,y)=\omega(a,y) \end{eqnarray*}
\indent If $(x_k,y_k)\in I\times \overline I \to (x,y)\in M_h$ with $x \in \partial I$, then set $n_k=n_{x_k}$, as well as $u_k=\rho_1(\gamma^{n_k})(x_k)$ and $v_k=\rho_1(\gamma^{n_k})(y_k)$. By definition, we have: $$\omega(x_k,y_k)=\omega_2(u_k,v_k)\rho_1(\gamma^{n_k})'(x_k)\rho_1(\gamma^{n_k})'(y_k)$$
\indent Since $\omega_{\gamma}$ is invariant under $\rho_1(\gamma)$, we have: $$\rho_1(\gamma^{n_k})'(x_k)\rho_1(\gamma^{n_k})'(y_k) = \frac{\omega_{\gamma}(x_k,y_k)}{\omega_{\gamma}(u_k,v_k)}$$
\indent These two equalities give us: $$\omega(x_k,y_k) =\frac{\omega_2(u_k,v_k)}{\omega_{\gamma}(u_k,v_k)}\omega_{\gamma}(x_k,y_k)$$
\indent The continuity of $\omega_{\gamma}$ gives us $\omega_{\gamma}(x_k,y_k)\to \omega_{\gamma}(x,y)$.\\
\indent Since $x_k\to x\in \partial I$, we have $n_k\to \infty$ and $v_k\to v$ where $v$ is the other extremal point of $I$. By using the uniform continuity of $\omega_2$ and $\omega_{\gamma}$ on $ \intff{a}{\rho_1(\gamma)(a)}\times \overline I$, we obtain:
$$\omega(x_k,y_k) \sim \frac{\omega_2(u_k,v)}{\omega_{\gamma}(u_k,v)} \omega_{\gamma}(x,y)$$

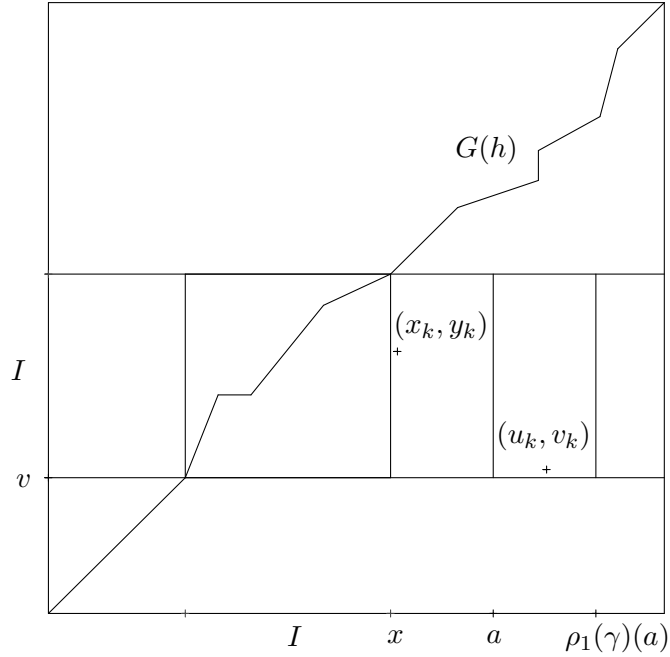
\begin{figure}[h]
%\begin{tikzpicture}[line cap=round,line join=round,>=triangle 45,x=1.0cm,y=1.0cm,scale=0.7]
%\clip(-4.9,-4.7) rectangle (16.9,5.4);
\definecolor{uuuuuu}{rgb}{0.27,0.27,0.27}
\begin{tikzpicture}[line cap=round,line join=round,>=triangle 45,x=1.0cm,y=1.0cm,scale=0.9]
\clip(-3.5,-4.8) rectangle (16.9,5.4);
\draw (-1,5)-- (8,5);
\draw (-1,-4)-- (-1,5);
\draw (-1,-4)-- (8,-4);
\draw (8,-4)-- (8,5);
\draw (1,1)-- (4,1);
\draw (4,-2)-- (1,-2);
\draw (-1.7,-0.12) node[anchor=north west] {$I$};
\draw (5.25,-4.1) node[anchor=north west] {$a$};
\draw (6.4,-4) node[anchor=north west] {$\rho_1(\gamma)(a)$};
\draw (5.4,-1.02) node[anchor=north west] {$(u_k,v_k)$};
\draw (3.9,0.6) node[anchor=north west] {$(x_k,y_k)$};
\draw (2.34,-4.04) node[anchor=north west] {$I$};
\draw (8,-2)-- (-1,-2);
\draw (-1,1)-- (8,1);
\draw (5.5,-2)-- (5.5,1);
\draw (7,-2)-- (7,1);
\draw (1,1)-- (1,-2);
\draw (4,1)-- (4,-2);
\draw (-1,-4)-- (1,-2);
\draw (1,-2)-- (1.48,-0.78);
\draw (1.48,-0.78)-- (1.96,-0.78);
\draw (1.96,-0.78)-- (3.02,0.54);
\draw (3.02,0.54)-- (4,1);
\draw (4,1)-- (4.98,1.98);
\draw (4.98,1.98)-- (6.16,2.38);
\draw (6.16,2.38)-- (6.16,2.82);
\draw (6.16,2.82)-- (7.06,3.32);
\draw (7.06,3.32)-- (7.32,4.32);
\draw (7.32,4.32)-- (8,5);
\draw (4.8,3.2) node[anchor=north west] {$G(h)$};
\draw (3.8,-4.1) node[anchor=north west] {$x$};
\draw (-1.62,-1.8) node[anchor=north west] {$v$};
\begin{scriptsize}
\draw [color=black] (-1,-2)-- ++(-1.5pt,0 pt) -- ++(3.0pt,0 pt) ++(-1.5pt,-1.5pt) -- ++(0 pt,3.0pt);
\draw [color=black] (4.1,-0.14)-- ++(-1.5pt,0 pt) -- ++(3.0pt,0 pt) ++(-1.5pt,-1.5pt) -- ++(0 pt,3.0pt);
\draw [color=black] (6.28,-1.88)-- ++(-1.5pt,0 pt) -- ++(3.0pt,0 pt) ++(-1.5pt,-1.5pt) -- ++(0 pt,3.0pt);
\draw [color=black] (5.5,-4)-- ++(-1.5pt,0 pt) -- ++(3.0pt,0 pt) ++(-1.5pt,-1.5pt) -- ++(0 pt,3.0pt);
\draw [color=black] (7,-4)-- ++(-1.5pt,0 pt) -- ++(3.0pt,0 pt) ++(-1.5pt,-1.5pt) -- ++(0 pt,3.0pt);
\draw [color=uuuuuu] (1,-4)-- ++(-1.5pt,0 pt) -- ++(3.0pt,0 pt) ++(-1.5pt,-1.5pt) -- ++(0 pt,3.0pt);
\draw [color=uuuuuu] (-1,1)-- ++(-1.5pt,0 pt) -- ++(3.0pt,0 pt) ++(-1.5pt,-1.5pt) -- ++(0 pt,3.0pt);
\draw [color=uuuuuu] (4,-4)-- ++(-1.5pt,0 pt) -- ++(3.0pt,0 pt) ++(-1.5pt,-1.5pt) -- ++(0 pt,3.0pt);
\end{scriptsize}
\end{tikzpicture}\caption{Defining $\omega$ on horizontal strips} \label{fig:horizontal}
\end{figure}

\indent We now only have to deal with the restrictions of $\omega_2$ and $\omega_{\gamma}$  to the axes $\{x\} \times \Ss^1\cup \Ss^1\times \{v\}$ (see Figure \ref{fig:horizontal}), where continuous volume forms invariant under $(\rho_1(\gamma),\rho_2(\gamma))$ are unique up to multiplication by a constant: there is $\lambda>0$ such that $\omega_2(s,t)=\lambda \omega_{\gamma}(s,t)$ whenever $s=x$ or $t=v$. We can finally conlude: $$\omega(x_k,y_k) \to \lambda \omega_{\gamma}(x,y) = \omega_2(x,y)=\omega(x,y)$$
\indent Finally, we can extend $\omega$ to $ \Ss^1 \times \overline I  \setminus G(h)$ by setting $\omega = \lambda \omega_\gamma$ on $\overline I\times \overline I \setminus G(h)$.
\end{proof}

\subsection{Extending to $M_h$} \label{subsec:extending} We can now extend $\omega$ to $M_h$. Getting an invariant volume form is not complicated, however its regularity requires some work.\\
\indent Our proof of the regularity of $\omega$ on horizontal strips relied on the existence of a continuous invariant form by any element of $\mathbb F_3$. To deal with the invariance under the whole group, we will need a different method.
\begin{prop} \label{continuous} There is a continuous invariant form $\omega$ on $M_h$ that is invariant under $(\rho_1,\rho_2)$ and that is equal to $\omega_2$ on $L \times\Ss^1\cup\Ss^1\times L$. \end{prop}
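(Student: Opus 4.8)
The form $\omega$ will be obtained by gluing the horizontal strips of Lemma \ref{vertical} to $\omega_2$, the only delicate point being continuity along $\Ss^1\times L$. There are exactly two $\mathbb F_3$-orbits of connected components of $\Ss^1\setminus L$ (those stabilised by conjugates of $\delta_1$, and those stabilised by conjugates of $\delta_2$); fix a representative $I$ in each, and let $\omega_I$ be the volume form on $\Ss^1\times\overline I\setminus G(h)$ provided by Lemma \ref{vertical}, so that $\omega_I$ is invariant under $\Stab(I)$ and equals $\omega_2$ on $(L\times\overline I)\cup(\Ss^1\times\partial I)$. For any component $J=\rho_1(\gamma)(I)$ put $\omega_J=(\rho_1(\gamma),\rho_2(\gamma))_*\omega_I$ on $\Ss^1\times\overline J\setminus G(h)$: this is independent of the choice of $\gamma$ because $\omega_I$ is $\Stab(I)$-invariant, and it still equals $\omega_2$ on $(L\times\overline J)\cup(\Ss^1\times\partial J)$ since $\omega_2$ is equivariant there. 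Finally set $\omega=\omega_2$ on $(\Ss^1\times L)\cap M_h$. Distinct components of $\Ss^1\setminus L$ have disjoint closures, so the pieces $\Ss^1\times\overline J$ overlap only inside $\Ss^1\times L$, where all definitions agree; thus $\omega$ is a well-defined positive $(\rho_1,\rho_2)$-invariant section on $M_h$, equal to $\omega_2$ on $L\times\Ss^1\cup\Ss^1\times L$. Everything except continuity is now automatic.

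\textbf{Reduction of the continuity.} Off $\Ss^1\times L$ the form $\omega$ coincides, near any point, with the interior of a horizontal strip, hence is continuous by Lemma \ref{vertical}. So fix $(x_0,y_0)\in M_h$ with $y_0\in L$ and a sequence $(x_n,y_n)\to(x_0,y_0)$. If $y_n\in L$ along a subsequence we are done by continuity of $\omega_2$; otherwise $y_n$ lies in a component $I_n$ of $\Ss^1\setminus L$, and — after passing to a subsequence, and using the closed-strip continuity of the relevant $\omega_I$ to dispose of the case where the $I_n$ are eventually a fixed component (which forces $y_0\in\partial I$) — we may assume $|I_n|\to0$, $\overline{I_n}\to\{y_0\}$, and that all $I_n$ lie in one orbit, $I_n=\rho_1(\gamma_n)(I)$ with $\gamma_n\to\infty$ in $\mathbb F_3$. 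Since $\rho_1$ is convex cocompact and torsion free, every $\gamma_n$ is hyperbolic; write $a_n^-$ (repelling) and $a_n^+$ (attracting) for its fixed points. Passing again to a subsequence, $a_n^\pm\to a^\pm\in L$, and, because $\rho_1(\gamma_n)(\overline I)=\overline{I_n}$ collapses to $\{y_0\}$ while no power of $\gamma_n$ lies in $\Stab(I)$, one gets $a^+=y_0$.

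\textbf{The estimate.} Put $a_n=\rho_1(\gamma_n^{-1})(x_n)$ and $b_n=\rho_2(\gamma_n^{-1})(y_n)\in\overline I$. By the chain rule the definition of $\omega$ over $I_n$ reads
$$\omega(x_n,y_n)=\omega_I(a_n,b_n)\,\rho_1(\gamma_n^{-1})'(x_n)\,\rho_2(\gamma_n^{-1})'(y_n).$$
The awkward factor $\rho_2(\gamma_n^{-1})'(y_n)$ is removed by applying the invariance of $\omega_2$ at the point $(a_n^-,y_n)$ (legitimate since $a_n^-\in L$), using that $a_n^-$ is fixed by $\gamma_n$: this yields $\rho_2(\gamma_n^{-1})'(y_n)=\omega_2(a_n^-,y_n)\big/\big(\omega_2(a_n^-,b_n)\,\rho_1(\gamma_n^{-1})'(a_n^-)\big)$, hence
$$\omega(x_n,y_n)=\frac{\omega_I(a_n,b_n)}{\omega_2(a_n^-,b_n)}\;\omega_2(a_n^-,y_n)\;\frac{\rho_1(\gamma_n^{-1})'(x_n)}{\rho_1(\gamma_n^{-1})'(a_n^-)}.$$
Assuming for definiteness that $x_0$ avoids $a^-$, $y_0$ and $\partial I$ (the remaining cases — $x_0$ at a fixed point, inside a component, or at an endpoint — being handled separately), we have $a_n\to a^-$, so $(a_n,b_n)$ and $(a_n^-,b_n)$ share the limit $(a^-,b_\infty)\in L\times\overline I$, where $\omega_I=\omega_2$; and since $\rho_1$ is Fuchsian, $\rho_1(\gamma_n^{-1})$ is a hyperbolic Möbius transformation, so $\rho_1(\gamma_n^{-1})'(x_n)/\rho_1(\gamma_n^{-1})'(a_n^-)=\big((a_n-a_n^+)/(x_n-a_n^+)\big)^2\to\big((a^--y_0)/(x_0-y_0)\big)^2$. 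Therefore
$$\omega(x_n,y_n)\longrightarrow\omega_2(a^-,y_0)\,\Big(\tfrac{a^--y_0}{x_0-y_0}\Big)^2 .$$
For $\omega$ to be continuous this limit must equal $\omega_2(x_0,y_0)$ for \emph{every} admissible $a^-$, i.e. the density $x\mapsto\omega_2(x,y_0)$ on $\Ss^1$ must be of the form $c(y_0)(x-y_0)^{-2}$. This is the genuine content of the proof: it is the rigidity of an equivariant boundary density for the Fuchsian group $\rho_1(\mathbb F_3)$, obtained — exactly as in the proof of Theorem 1.4 of \cite{Monclaira} — by comparing $\omega_2$ with the $\PSL(2,\R)$-invariant form $\omega_0$ on $L\times L$, where $\rho_1$ and $\rho_2$ agree and the action is minimal, and then propagating in the $x$-variable by equivariance and continuity. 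Granting it, the displayed limit is $\omega_2(x_0,y_0)$ and $\omega$ is continuous.

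\textbf{Main obstacle.} The first paragraph is bookkeeping; the real difficulty is the last step. Unlike in Lemma \ref{vertical}, there is no a priori $\mathbb F_3$-invariant reference form available while the strips $\Ss^1\times\overline{I_n}$ collapse, so the uniform control must be recovered differently: one trades it for the known invariance of $\omega_2$ on $L\times\Ss^1$ evaluated at the (moving) fixed points of $\gamma_n$, for the bounded distortion of the Fuchsian maps $\rho_1(\gamma_n)$, and — crucially — for the rigidity of $\omega_2$ along $\Ss^1\times L$. That last point is the heart of the argument and is where the hypothesis that $\rho_1$ is Fuchsian, with minimal action on its Cantor limit set, is really used.
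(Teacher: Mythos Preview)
Your construction of $\omega$ by equivariant gluing of the horizontal strips is correct and matches the paper. The gap is entirely in the continuity argument, and it is a real one.

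Your whole computation ultimately rests on the claim that $\omega_2(x,y_0)=c(y_0)(x-y_0)^{-2}$ for \emph{every} $x\in\Ss^1$, not just for $x\in L$. You do not prove this, and it does not follow from what is available. On $L\times L$ the ratio $\omega_2/\omega_0$ is indeed an invariant continuous function for the diagonal $\rho_1$-action, hence constant by minimality; but for $x\notin L$ the equivariance of $\omega_2$ only relates $\omega_2(x,y_0)$ to $\omega_2(\rho_1(\gamma)x,\rho_1(\gamma)y_0)$, never to $\omega_2(x',y_0)$ with the same second coordinate, so there is no ``propagation in the $x$-variable''. Worse, go back to how $\omega_2$ was built: it comes from $e^\sigma\omega_0$ on $N_h$, where $\omega_0$ is \emph{any} smooth volume form invariant along $\Omega_\psi$, and $\sigma$ on $W^s(\Omega_\psi)$ is the integral of a cocycle that depends on this arbitrary extension of $\omega_0$ off $\Omega_\psi$. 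So the values of $\omega_2$ on $(\Ss^1\setminus L)\times L$ are genuinely non-canonical, and your rigidity statement is simply false for a generic choice. The appeal to \cite{Monclaira} does not save this: in that paper the limit set is the whole circle, so $L\times L$ is already everything. There are also secondary loose ends (no normalisation of $\gamma_n$ by $\Stab(I)$, so $b_n$ may escape to $\partial I$ and $(a^-,b_\infty)$ may land on $G(h)$; $a^-$ is not under your control, so ``assuming $x_0\ne a^-$'' is not a reduction), but they are moot once the main claim fails.

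The paper's argument sidesteps the need for any such rigidity. After pulling back to the reference strip (normalising by $\Stab(I_1)$ so that $v_k$ stays in a fixed compact $K\subset I_1$), it introduces auxiliary points $x'_k,y'_k\in L$ and forms the four-point ratio
\[
\frac{\omega(x_k,y_k)}{\omega(x_k,y'_k)}\,\frac{\omega(x'_k,y'_k)}{\omega(x'_k,y_k)}
=\frac{\omega(u_k,v_k)}{\omega(u_k,v'_k)}\,\frac{\omega(u'_k,v'_k)}{\omega(u'_k,v_k)},
\]
in which all Jacobian factors cancel. The left side is $\sim\omega(x_k,y_k)/\omega(x,y)$ because $y'_k\to y$; the right side lives in a compact set of $\intoo{0}{+\infty}$ and, using only the convergence property of $\rho_1(\gamma_k)$ (no M\"obius formula, no information about $\omega_2$ beyond continuity on the strip), every subsequential limit is $1$. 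This is precisely the device that replaces your missing rigidity.
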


\begin{proof} The action of $\mathbb F_3$ on the set of connected components of $\Ss^1\setminus L$ has two orbits. Let $I_1, I_2$ be the components preserved by $\delta_1$ and $\delta_2$. By Lemma \ref{vertical}, there is a continuous volume form $\omega$ on $\Ss^1\times \overline I_i \setminus G(h)$ that is equal to $\omega_2$ in restriction to $L\times \Ss^1\cup \Ss^1\times L$ and that is invariant under the stabilizer of $I_i$.  If $\gamma \in \mathbb F_3$, then we define $\omega$ on $\Ss^1\times \rho_2(\gamma) (\overline I_i) \setminus G(h)$ to be $(\rho_1(\gamma),\rho_2(\gamma))_*\omega$. This defines a volume form $\omega$ on $M_h$ that is $(\rho_1,\rho_2)$-invariant, continuous on all horizontal strips $\Ss^1\times \overline I \setminus G(h)$ where $I$ is a connected component of $\Ss^1\setminus L$ and equal to $\omega_2$ on $L \times \Ss^1\cup \Ss^1\cup L$.\\
\indent To show that $\omega$ is continuous, assume that $(x_k,y_k)\to (x,y)$ with $y\in L$ (if $y\notin L$, then there is a connected component $I$ of $\Ss^1\setminus L$ such that $y_k\in I$ for $k$ large enough, which gives us $\omega(x_k,y_k)\to \omega(x,y)$). If $y_k\in L$ for all $k$, then $\omega(x_k,y_k)=\omega_2(x_k,y_k)$ and we already have the continuity, hence we can assume that $y_k\notin L$ for all $k$. Up to considering two subsequences, we can assume that there is $\gamma_k\in \mathbb F_3$ such that $v_k=\rho_2(\gamma_k)(y_k) \in I_1$. By composing $\gamma_k$ with an element of the stabilizer of $I_1$, we can take $v_k$  in a compact interval $K\subset I_1$.\\
\indent Let $u_k=\rho_1(\gamma_k)(x_k)$. The definition of $\omega$ is:
$$\omega(x_k,y_k)=\omega(u_k,v_k)\rho_1(\gamma_k)'(x_k)\rho_2(\gamma_k)'(y_k)$$
\indent We have already seen that $\omega$ is continuous on $\Ss^1\times\overline I_1\setminus G(h)$ and $v_k\in I_1$. The problem in finding the limit of $\omega(x_k,y_k)$ is the control of the Jacobian product $\rho_1(\gamma_k)'(x_k)\rho_2(\gamma_k)'(y_k)$. However, we know that $\omega$ is continuous on $L\times \Ss^1\cup \Ss^1\times L$. We will use this fact to get rid of the derivatives: if $x'_k$ and $y'_k$ are sequences in $L$ such that $(x'_k, y'_k)\notin G(h)$, $(x'_k, y_k)\notin G(h)$ and $(x_k, y'_k)\notin G(h)$, then we set $u'_k=\rho_1(\gamma_k)(x'_k)$ and $v'_k=\rho_2(\gamma_k)(y'_k)$. The equivariance equation for $\omega$ gives us:  \begin{equation} \frac{\omega(x_k,y_k)}{\omega(x_k,y'_k)}\frac{\omega(x'_k,y'_k)}{\omega(x'_k,y_k)} = \frac{\omega(u_k,v_k)}{\omega(u_k,v'_k)}\frac{\omega(u'_k,v'_k)}{\omega(u'_k,v_k)} \label{equivariance2} \end{equation}
\indent We are now looking for suitable points $x'_k$ and $y'_k$. Let $I_1=\intoo{a}{b}$, and assume that $u_k$ does not admit $a$ as a limit point (up to considering two subsequences and replacing $a$ by $b$ in the following discussion, we can always assume that it is the case), i.e. that $a_k$ lies in a compact interval $J\subset \Ss^1\setminus\{a\}$. Let $v'_k=a$ and $y'_k=\rho_2(\gamma_k^{-1})(a) \to y$. If $x_k\in L$, then we choose $x'_k=x_k$. If $x_k\notin L$, then we set $x'_k$ to be an extremal point of the connected component of $\Ss^1\setminus L$ containing $x_k$, in a way such that $u'_k=\rho_1(\gamma_k)(x'_k)\in J$.\\
\indent We now have $y'_k\to y$ and $y_k\to y$, so we get:%$y'_k\to y'$ which is either $y$ (if $y\in L_{\Gamma}$), either an extremal point of the connected component of $y$ in $\Ss^1\setminus L_{\Gamma}$, such that $(x,y')\in \mathcal C$. We get:
$$\frac{\omega(x_k,y_k)}{\omega(x_k,y'_k)}\frac{\omega(x'_k,y'_k)}{\omega(x'_k,y_k)}  \sim \frac{\omega(x_k,y_k)}{\omega(x,y)}\frac{\omega(x'_k,y)}{\omega(x'_k,y)} = \frac{\omega(x_k,y_k)}{\omega(x,y)}$$
\indent We wish to show that this quantity converges to $1$ as $k\to \infty$. The compact set  $E=J\times \{b\} \cup  \Ss^1\setminus I_1 \times K$ of $M_h$  contains the sequences $(u_k,v_k)$,  $(u_k,v'_k)$, $(u'_k,v_k)$ and $(u'_k,v'_k)$. Consequently, the ratio \eqref{equivariance2} lies in a compact set of $\intoo{0}{+\infty}$, and it is enough to see that its only possible limit is $1$. If there is a subsequence such that the ratio \eqref{equivariance2} converges to $\lambda \in \intoo{0}{+\infty}$, then up to another subsequence, we can assume that the sequence $\gamma_k$ has the convergence property: there are $N,S\in \Ss^1$ such that $\rho_1(\gamma_k)(z)\to N$ for all $z\ne S$. Since $\rho_1(\gamma_k^{-1})(z)\to x$ for all $z\in I_1$, we see that $S$ in necessarily equal to $x$, hence the sequences $v_k$ and $v'_k$ converge to $N\in \Ss^1$. We get: $$ \frac{\omega(u_k,v_k)}{\omega(u_k,v'_k)}\frac{\omega(u'_k,v'_k)}{\omega(u'_k,v_k)} \to \frac{\omega(u,N)}{\omega(u,N)}\frac{\omega(a,N)}{\omega(a,N)} = 1   $$ 
\indent This shows that $\lambda =1$, therefore $\omega(x_k,y_k)\to \omega(x,y)$ and $\omega$ is continuous.
\end{proof}

We only showed that $\omega$ is continuous, but following \cite{Monclaira} one can show that it is possible to obtain smooth volume form.

\subsection*{Acknowledgments} This work corresponds to Chapter 5 and section 3 of Chapter 3 in my PhD thesis \cite{these}.   I would like to thank my  advisor Abdelghani Zeghib for his help throughout this work, as well as Andrés Navas for sharing his knowledge about Denjoy diffeomorphisms.

\footnotesize \textsc{UMPA, \'Ecole Normale Supérieure de Lyon, 46 allée d'Italie, 69364 Lyon Cedex 07, France}\\
 \emph{E-mail address:}  \verb|daniel.monclair@ens-lyon.fr|

\end{document}